\newtheorem{theo}{Theorem}[section]
\newtheorem{defi}[theo]{Definition}
\newtheorem{prop}[theo]{Proposition}
\newtheorem{lemm}[theo]{Lemma}
\newtheorem{coro}[theo]{Corollary}
\newtheorem{nota}[theo]{Notation}
\newtheorem{exam}[theo]{Example}
\newtheorem{rem}[theo]{Remark}
\newenvironment{rema}{\begin{rem}\rm }{\hfill $\blacktriangleleft$\end{rem}}
\def\del{\partial} 
\def\bb#1{\mathbb{#1}} \def\bbcp{\mathbb{C}\mathbb{P}} \def\m#1{\mathcal{#1}} \def\mfr#1{\mathfrak{#1}}
\def\id{\mathbbm{1}}
 \def\co{\colon\thinspace}
\def\momeg{(M,\omega)}
\def\ham#1{\mathrm{Ham}#1}
\def\ee{\mathrm{e}}
\def\max{\mathrm{max}}
\def\akl{{kA_n+lA_1}}
\def\GW{\mathrm{GW}}
\def\PD{\mathrm{PD}}
\def\QH{{QH}}
\def\Q{\mathbb Q}
\def\N{\mathbb N}
\def\R{\mathbb R}
\def\C{\mathbb C}
\def\Z{\mathbb Z}
\def\F{\mathbb F}
\def\X{\mathbb X}
\def \bbcp{\mathbb C\mathbb P}
\newcommand{\pt}{[\mathrm{pt}]}
\newcommand{\cM}{\mathcal{M}}
\newcommand{\cO}{\mathcal{O}}
\newcommand{\Mbar}{\overline{\cM}}
\newcommand{\eva}{\mathrm{ev}}
\newcommand{\vir}{{\mathrm{vir}} }
\newcommand{\Ext}{\mathrm{Ext}}
\newcommand{\Lin}{\mathrm{Lin}}
\newcommand{\Vertices}{\mathrm{Vert}}
\newcommand{\Edge}{\mathrm{Edge}}
\newcommand{\val}{\mathrm{val}}
\newcommand{\vb}{\mathfrak{b}}
\newcommand{\PbP}{ \bbcp^2\#\,\overline{ \bbcp}\,\!^2}
\newcommand{\conezero}{\mathcal{\#}\{c_1=0\}}
\newcommand*{\quot}[2]%
{\ensuremath{%
   \raisebox{.35ex}{\ensuremath{#1}}\big/\raisebox{-.35ex}{\ensuremath{#2}}}}
\begin{document}

\title{{S}eidel's morphism of toric 4--manifolds}
\author{S\'ilvia Anjos and R\'emi Leclercq}
\date{\today}
\thanks{$\,\!^\star${see \textit{Alice's adventures in Wonderland}, by Lewis Carroll (1865).}}
\address{S\'ilvia Anjos: Center for Mathematical Analysis, Geometry and Dynamical Systems, Mathematics Department,
Instituto Superior T\'ecnico, Av. Rovisco Pais, 1049-001 Lisboa, Portugal}
\email{sanjos@math.ist.utl.pt}
\address{R\'emi Leclercq: Universit\'e Paris-Sud, D\'epartement de Math\'ematiques, Bat. 425, 91400 Orsay, France}
\email{remi.leclercq@math.u-psud.fr}

\subjclass[2010]{Primary 53D45; Secondary 57S05, 53D05}
\keywords{symplectic geometry, Seidel morphism, NEF toric manifolds, Gromov--Witten invariants, quantum homology, Landau--Ginzburg superpotential}

\begin{abstract}
Following McDuff and Tolman's work on toric manifolds \cite{McDuffTolman06}, we focus on 4--dimensional NEF toric manifolds and we show that even though Seidel's elements consist of infinitely many contributions, they can be expressed by closed formulas. From these formulas, we then deduce the expression of the quantum homology ring of these manifolds as well as their Landau--Ginzburg superpotential. We also give explicit formulas for the Seidel elements in some non-NEF cases. 
These results are closely related to recent work by Fukaya, Oh, Ohta, and Ono \cite{FOOO11}, Gonz\'alez and Iritani \cite{GonzalesIritani11}, and Chan, Lau, Leung, and Tseng \cite{CLLT}. The main difference is that in the 4--dimensional case the methods we use are more elementary: they do not rely on open Gromov--Witten invariants nor mirror maps. We only use the definition of Seidel's elements and specific closed Gromov--Witten invariants which we compute via localization. So, unlike Alice$\,\!^\star\!$, the computations contained in this paper are not particularly pretty but they do stay on their side of the mirror. This makes the resulting formulas directly readable from the moment polytope. 
\end{abstract}

\maketitle

\section{Introduction}\label{sec:introduction}

Let $(M,\omega)$ be a closed connected symplectic manifold and let as usual $\ham{(M,\omega)}$ denote its Hamiltonian diffeomorphism group. Under a suitable condition of semipositivity, Seidel defined in \cite{Seidel97} a morphism, $\m S$,  from $\pi_1(\ham{(M,\omega)})$ to -- after a mild generalization due to Lalonde, McDuff, and Polterovich \cite{LMcDP99} -- $QH_*(M,\omega)^\times$, the group of invertible elements of the quantum homology of $(M,\omega)$. This morphism has been extensively used in order to get information on the topology of Hamiltonian diffeomorphism groups as well as the quantum homology of symplectic manifolds.
 It has also been extended in various directions, see the end of the introduction for some of these extensions related to the present work. 

A quantum class lying in the image of $\m S$ is called a \textit{Seidel element}. In \cite{McDuffTolman06}, McDuff and Tolman were able to specify the structure of the lower order terms of Seidel's elements associated to Hamiltonian circle actions whose maximal fixed point component, $F_\max$, is semifree. Recall that this condition means that the action is semifree on a neighborhood of $F_\max$ which means, in our case, that the stabilizer of each point is trivial or the whole circle. When the codimension of $F_\max$ is 2, their result immediately ensures that if there exists an almost complex structure $J$ on $M$ so that $(M,J)$ is Fano, i.e so that there are no $J$--pseudo-holomorphic spheres in $M$ with non-positive first Chern number, all the lower order terms vanish. In the presence of $J$--pseudo-holomorphic spheres with vanishing first Chern number, there is a priori no reason why arbitrarily large multiple coverings of such objects should not contribute to the Seidel elements. As a matter of fact, McDuff and Tolman exhibited an example of such a phenomenon when $(M,J)$ is a NEF pair, which by definition do not admit $J$--pseudo-holomorphic spheres with negative first Chern number.

In this paper, we show that even though there are indeed infinitely many contributions to the Seidel elements associated to the Hamiltonian circle actions of a NEF 4--dimensional toric manifold, these quantum classes can still be expressed by explicit closed formulas. Moreover, these formulas only depend on the relative position of representatives of elements of $\pi_2(M)$ with vanishing first Chern number as facets of the moment polytope. In particular, they are directly readable from the polytope.

More precisely, we consider (see Section \ref{sec:toric-manif-quant} for precise definitions):
\begin{itemize}
\item a 4--dimensional closed symplectic manifold $(M,\omega)$, endowed with a toric structure and admitting a NEF almost complex structure,
\item its corresponding Delzant polytope $P$, which is assumed to have $n \geq 4$ facets,
\item a Hamiltonian action generated by a circle subgroup $\Lambda$, with moment map $\Phi_\Lambda$.
\end{itemize}
We assume additionally, that the fixed point component of $\Lambda$ on which $\Phi_\Lambda$ is maximal is a 2--sphere, $F_\max \subset M$, whose momentum image is a facet of $P$, $D$. We denote by $A \in H_2(M;\bb Z)$ the homology class of $F_\max$ and by $\Phi_\max = \Phi_\Lambda(F_\max)$.

In this case, McDuff--Tolman's result ensures that the Seidel element associated to $\Lambda$ is 
\begin{align*}
  S(\Lambda) = A\otimes q t^{\Phi_\max} + \sum_{B \in H_2^S\!(M;\bb Z)^{>0}} a_B \otimes q^{1-c_1(B)} t^{\Phi_\max-\omega(B)} 
\end{align*}
where $H_2^S(M;\bb Z)^{>0}$ consists of the spherical classes of symplectic area $\omega(B)>0$ and $a_B\in H_*(M;\bb Z)$ denotes the contribution of $B$. As mentioned above, when there exists a Fano almost complex structure, all the lower order terms vanish and we end up with $ S(\Lambda)=A \otimes qt^{\Phi_\max}$. 

In the non-Fano case, one has to be careful about the number and relative position of facets, in the vicinity of $D$, corresponding to spheres in $M$ with vanishing first Chern number. We denote the number of such facets by $\conezero$. Theorem \ref{theo:contributions} lists all the contributions made to the Seidel element associated to $\Lambda$ in the 6 cases when $\conezero \leq 2$. We denote the facets and the corresponding homology classes in $M$ in a cyclic way, that is, $D$, which we denote by $D_n$ below, has neighbooring facets $D_{n-1}$ on one side and $D_{n+1}=D_1$ on the other, and they respectively induce classes $A_n$, $A_{n-1}$, and $A_{n+1}=A_1$ in $H_2(M;\bb Z)$.

 Figure \ref{fig:cases} shows the relevant parts of the different polytopes we need to consider. Dotted lines represent facets with non-zero first Chern number and we indicate near each facet with non-trivial contribution the homology class of the corresponding sphere in $M$. For example, in Case \textit{(3c)}, only three homology classes contribute: $A_{n-1}$, $A_n$, and $A_1$; $A_{n-1}$ and $A_1$ have vanishing first Chern number while $c_1(A_n) \neq 0$.
\begin{figure}[htbp]
  \centering
  \includegraphics{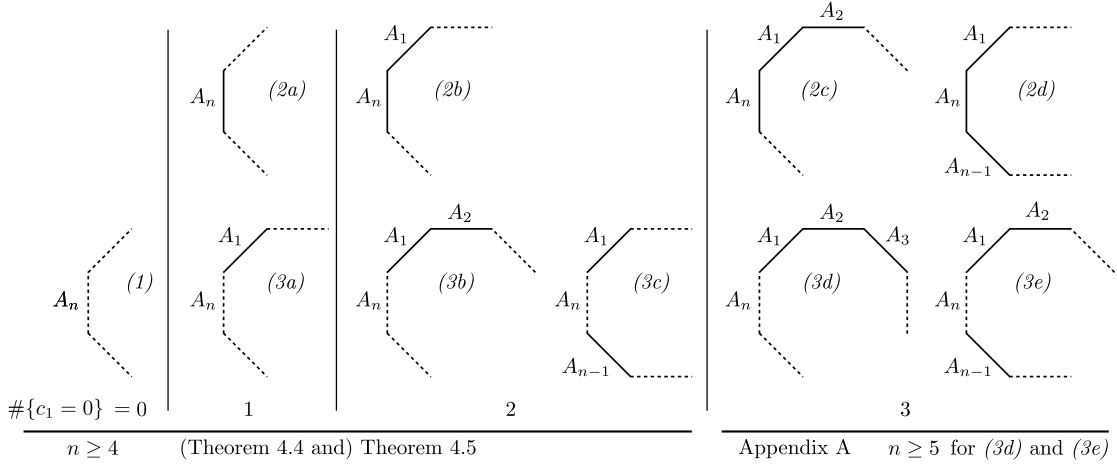}
  \caption{The cases appearing in Theorems \ref{theo:contributions} and \ref{theo:SeidelsMorphism}, and Appendix \ref{sec:BrutalComputationsSeidSMorph}}
  \label{fig:cases}
\end{figure}

\noindent \textbf{Theorem \ref{theo:contributions}.}$\,$ \textit{ 
With the notation and under the assumptions above, the following homology classes have non trivial contributions to $S(\Lambda)$:
  \begin{enumerate}
  \item $A_n$ contributes by $a_{A_n}=A_n$.
  \item When $c_1(A_n)=0$, 
    \begin{enumerate}[(2a)] 
    \item then $kA_n$ (with $k>0$) contributes by $a_{kA_n}=A_n$,
   \item and when $c_1(A_1)=0$, then $kA_n+lA_1$ (with $k\geq 0$ and $l>0$) contributes and its contribution is $a_{kA_n+lA_1}= \left\{ \begin{array}{l} A_n \mbox{ if } k\geq l \\ -A_1 \mbox{ otherwise.}  \end{array} \right.$
    \end{enumerate}
  \item When $c_1(A_n)\neq 0$,
    \begin{enumerate}[(3a)]
    \item when $c_1(A_1)=0$, then $kA_1$ (with $k>0$) contributes by $a_{kA_1}=-A_1$,
    \item when $c_1(A_1)=0$ and $c_1(A_2)=0$, then $kA_1+lA_2$ (with $k>0$ and $l>0$) also contributes, and its contribution is $a_{kA_1+lA_2}= \left\{ \begin{array}{l} -A_1 \mbox{ if } k\geq l \\ A_2 \mbox{ otherwise,}  \end{array} \right.$
    \item when $c_1(A_{n-1})=0$ and $c_1(A_1)=0$, then $kA_{n-1}$ and $lA_1$ (with $k>0$ and $l>0$) also contribute, and their contributions are $a_{kA_{n-1}}= -A_{n-1}$ and $a_{lA_1}= -A_1$.
    \end{enumerate}
  \end{enumerate}
Moreover, in each case, if the facets immediately next to the ones mentioned correspond to spheres with non-zero first Chern number, then these are the only non-trivial contributions.
}

Now, under the same assumptions, Theorem \ref{theo:SeidelsMorphism} gives the explicit expression of the Seidel element associated to $\Lambda$ when $\conezero \leq 2$. Notice that we give (without proofs) the expression of the Seidel elements for $\conezero = 3$ in Appendix \ref{sec:BrutalComputationsSeidSMorph}.

\noindent \textbf{Theorem \ref{theo:SeidelsMorphism}.} 
\textit{
Under the assumptions above, and in the cases described by Figure \ref{fig:cases}, the Seidel element associated to $\Lambda$ is
\begin{align*}
& \hspace{-6.9cm} \mathit{(1)} \quad S(\Lambda)=A_n\otimes qt^{\Phi_\max} 
\end{align*}
\begin{align*}
&  \hspace{-1cm} \mathit{(2a)} \quad S(\Lambda)=A_n\otimes q\,\frac{t^{\Phi_\max}}{1-t^{-\omega(A_n)}} \\
&   \hspace{-1cm}\mathit{(2b)} \quad S(\Lambda)=\left[ A_n\otimes q\,\frac{t^{\Phi_\max}}{1-t^{-\omega(A_n)}}-A_1\otimes q\,\frac{t^{\Phi_\max-\omega(A_1)}}{1-t^{-\omega(A_1)}} \right]\cdot \frac{1}{1-t^{-\omega(A_n)-\omega(A_1)}} 
\end{align*}
\begin{align*}
& \mathit{(3a)} \quad S(\Lambda)=A_n\otimes qt^{\Phi_\max}-A_1\otimes q\,\frac{t^{\Phi_\max-\omega(A_1)}}{1-t^{-\omega(A_1)}} \\
& \mathit{(3b)} \quad S(\Lambda)  =  A_n\otimes qt^{\Phi_\max}-A_1\otimes q\,\frac{t^{\Phi_\max-\omega(A_1)}}{1-t^{-\omega(A_1)}}  \\
& \hspace{3cm} -\left(A_1\otimes q\,\frac{t^{\Phi_\max}}{1-t^{-\omega(A_1)}}-A_2\otimes q\,\frac{t^{\Phi_\max-\omega(A_2)}}{1-t^{-\omega(A_2)}}\right)\cdot \frac{t^{-\omega(A_1)-\omega(A_2)}}{1-t^{-\omega(A_1)-\omega(A_2)}} \\
& \mathit{(3c)} \quad S(\Lambda)=A_n\otimes qt^{\Phi_\max}-A_{n-1}\otimes q\,\frac{t^{\Phi_\max-\omega(A_{n-1})}}{1-t^{-\omega(A_{n-1})}}-A_1\otimes q\,\frac{t^{\Phi_\max-\omega(A_1)}}{1-t^{-\omega(A_1)}}.
\end{align*}
}

\subsection*{Interest of our approach}
\label{sec:our-approach}

This work is closely related to recent work by Fukaya, Oh, Ohta, and Ono \cite{FOOO10}, Gonz\'alez and Iritani \cite{GonzalesIritani11}, and Chan, Lau, Leung, and Tseng \cite{CLLT}. Roughly speaking, for toric NEF symplectic manifolds, on one side Fukaya, Oh, Ohta, and Ono showed that quantum homology is isomorphic to the Jacobian of the open Gromov--Witten invariants generating function, $\mathrm{Jac}(W^{\mathrm{open}})$. On the other side, Gonz\'alez and Iritani expressed the Seidel elements in terms of Batyrev's elements via mirror maps. Finally, Chan, Lau, Leung, and Tseng proved that $W^{\mathrm{open}}$ coincides with the Hori--Vafa superpotential. Then by using this open mirror symmetry and the aforementioned results, they showed that the Seidel elements correspond to simple explicit monomials in $\mathrm{Jac}(W^{\mathrm{open}})$. In the 4--dimensional case, these results are clearly related to ours -- see for example the discussion on the Landau--Ginzburg superpotential in Example \ref{expl:45-pt-bu} below --, however our approach is somehow more elementary and stays on the symplectic side of the mirror. 

We now sketch our approach. The Seidel element of a symplectic manifold $(M,\omega)$ associated to a loop of Hamiltonian diffeomorphisms $\phi$ based at identity is defined by counting pseudo-holomorphic sections of $(M_\phi,\Omega)$ which is a symplectic fibration over $S^2$ with fibre $M$ and whose monodromy along the equator is given by $\phi$ (this construction is called the clutching construction, see Section \ref{sec:TotalSpaceToric} for more details). To compute Seidel's elements when $(M,\omega)$ is a toric 4--dimensional symplectic manifold and $\phi=\Lambda$ is one of the distinguished circle actions, we proceed as follows.
\begin{enumerate}
\item Following Gonz\'alez and Iritani \cite{GonzalesIritani11}, and Chan, Lau, Leung, and Tseng \cite{CLLT}, we notice that $(M_\Lambda,\Omega)$ is a toric 6--dimensional symplectic manifold, see Proposition \ref{prop:TotalSpace-Toric}\footnote{Actually, this first step does not require $M$ to be 4--dimensional.}. This allows us to reduce the computation of the Seidel elements to the computation of some 1--point Gromov--Witten invariants, see Section \ref{sec:proof-theor-refth}.
\item Then we compute the latter by induction using localization formulas from Spielberg's \cite{Spielberg99,Spielberg00} or Liu's \cite{Melissa13} for the base cases and the splitting axiom satisfied by Gromov--Witten invariants for the inductive steps, see Section \ref{sec:proof-theor-GWinv}.
\item Step (2) completely ends the computation up to some particular 0--point Gromov--Witten invariants which we preliminarily compute using a localization argument, see Section \ref{sec:proof-prop-nopointsinv}.
\end{enumerate}

\subsection*{Application in terms of Seidel's morphism and quantum homology}
\label{sec:appli-Seidel-morph}

As mentioned above, Seidel's morphism has been extensively studied for its applications. However not many things are known concerning $\m S$ itself, for example its injectivity. It is obvious that Seidel's morphism is trivial for symplectically aspherical manifolds since these particular manifolds do not admit non-constant pseudo-holomorphic spheres at all. In \cite{Seidel97}, Seidel showed that for all $m\geq 1$ Seidel's morphism detects an element of order $m+1$ in $\pi_1(\ham( \bbcp^{m},\omega_{\mathrm{st}}))$, with $\omega _{\mathrm{st}}$ the Fubini--Study symplectic form. In the case of $ \bbcp^2$ for example, this makes the Seidel morphism injective. Determining non-trivial elements of the kernel of $\m S$ in cases when $\m S$ is not ``obviously'' trivial would be interesting, for example to test the Seidel-type second order invariants introduced by Barraud and Cornea via their spectral sequence machinery \cite{BarraudCornea}. In order to find such classes, one should first compute all the Seidel elements in specific cases; here are families of examples for which the present work allows such computations.

\begin{exam}[Hirzebruch surfaces]
\label{expl:Hirz-Surf}
  It is well-known that Hirzebruch surfaces $\bb F_{2k}$ are symplectomorphic to $S^2 \times S^2$ endowed with the split symplectic form $\omega_\mu$ with area $\mu \geq 1$ for the first $S^2$--factor, and with area 1 for the second factor. 
  Recall that $\bb F_0$ is Fano, $\bb F_2$ is NEF, and that for all $k\geq 2$, $\bb F_{2k}$ admits spheres with negative first Chern number. As we shall see in Section \ref{sec:non-nef-examples}, the computations we present in this paper  allow us not only to compute directly the Seidel elements associated  to the circle actions of $\bb F_{2}$, but also  to compute the Seidel elements associated to the circle actions of $\bb F_{2k}$ for all $k \geq 2$, that is,  in the non-NEF cases. We present explicitly the case of $\bb F_4$.
  
Similar computations can be made for $\bb F_{2k+1}$ which can be identified with the 1--point blow-up of $ \bbcp^2$ endowed with its different symplectic forms.
\end{exam}

\begin{exam}[2-- and 3--point blow-ups of $ \bbcp^2$]
\label{expl:23-pt-bu}
In the same spirit, consider the symplectic manifold obtained from $ \bbcp^2$ by performing 2 or 3 blow-ups. It carries a family of symplectic forms $\omega_\nu$, where $\nu>0$ determines the cohomology class of $\omega_\nu$. It is well-known that it is symplectomorphic to $M_{\mu,c_1}$ or $M_{\mu,c_1,c_2}$, respectively the 1-- or 2--point blow-up of $S^2\times S^2$ endowed, as above, with the symplectic form $\omega_\mu$. Here, $c_1$ and $c_2$ are the capacities of the blow-ups. 

 In previous works, Pinsonnault \cite{Pinsonnault}, and Anjos and Pinsonnault \cite{AnjosPinsonnault13} computed the homotopy algebra of the Hamiltonian diffeomorphism groups of $M_{\mu,c_1}$ and $M_{\mu,c_1,c_2}$. In particular they showed that all the generators of its fundamental group  do not depend on the symplectic form nor the size of the blow-ups provided that $\mu>1$. In both cases, all the generators but one can be obtained as Hamiltonian circle actions associated to a Fano polytope while the last one is associated to a NEF polytope. When $\mu=1$, the fundamental group of the Hamiltonian diffeomorphism group is generated only by the former. So the computations we present here again allow us to compute all the Seidel elements of the 2-- and 3--point blow-ups of $ \bbcp^2$, regardless of the symplectic form and sizes of the blow-ups.
\end{exam}

Then we turn to quantum homology. Following \cite{McDuffTolman06}, we deduce from the expression of the Seidel elements described in Theorem \ref{theo:SeidelsMorphism} a presentation of the quantum homology of 4--dimensional NEF toric manifolds. Batyrev \cite{Batyrev93} and Givental \cite{Giv1,Giv2} showed that the quantum homology of Fano toric manifolds is isomorphic to a polynomial ring quotiented by relations given as the derivatives of the well-known Landau--Ginzburg superpotential. For NEF toric manifolds see also the works by  Chan and Lau  \cite{CL}, Fukaya, Oh, Ohta, and Ono \cite{FOOO10,FOOO10a}, Iritani \cite{Iritani}, Usher \cite{Usher10}, and references therein.   As an application of our computations we are able to give explicit  expressions for the potential in the NEF case which can be read directly from the moment polytope, and obviously can be related with Chan and Lau's results.

\begin{exam}[4-- and 5--point blow-ups of $ \bbcp^2$]
\label{expl:45-pt-bu}
To illustrate what is explained above, we explicitly compute the Seidel elements of the 4-- and 5--point blow-ups of $ \bbcp^2$. Note that these manifolds are NEF and do not admit any Fano almost complex structure. Then we deduce their quantum homology and we give the explicit expression of the related Landau--Ginzburg superpotential, see Section \ref{sec:nef-exples-45pt-bu}. Of course, this expression agrees with Chan and Lau's result \cite{CL} and in Remark \ref{rema:CLpotential} we indicate how.
\end{exam}

\subsection*{Extensions and applications}
\label{sec:motivations}

We now discuss some extensions of Seidel's morphism for which there is hope to get explicit information in the setting of and with similar techniques as the ones used in the present work.

\subsubsection*{Homotopy of $\ham$ in higher degrees}
\label{sec:quant-char-class}
As mentioned above, since \cite{Pinsonnault} and \cite{AnjosPinsonnault13} the homotopy algebra of the Hamiltonian diffeomorphism groups of the 2-- and 3--point blow-ups of $ \bbcp^2$ is completely understood. It would be interesting in this case to compute explicitly some invariants of the higher-degree homotopy groups generalizing Seidel's construction: the Floer-theoretic invariants for families defined by Hutchings in \cite{Hutchings} and the quantum characteristic classes introduced by Savelyev in \cite{Savelyev}.
Briefly recall that the former are morphisms $\pi_*(\ham(M,\omega)) \rightarrow \mathrm{End}_{*-1}(QH_*(M,\omega))$ obtained as higher continuation maps in Floer homology. The latter are defined via parametric Gromov--Witten invariants and lead to ring morphisms $H_*(\Omega\ham(M,\omega),\bb Q) \rightarrow QH_{2n+*}(M,\omega)$. Both constructions restrict to the Seidel representation, respectively in degree 1 and 0.

\subsubsection*{Bulk extension}
\label{sec:bulk-extension}
In this paper, what is called quantum homology should more precisely be refered to as the \textit{small} quantum homology ring. There is also a notion of \textit{big} quantum homology ring, obtained by considering not only the usual quantum product but also a family of deformations via even-degree cohomology classes of $M$, see e.g Usher \cite{Usher10} and Fukaya, Oh, Ohta, and Ono \cite{FOOO11} for a precise definition. For $b \in H^{\mathrm{ev}}(M)$, one ends up with $QH_*^{b}(M,\omega)$ isomorphic to $QH_*(M,\omega)$ as a vector space but with a twisted product. In \cite{FOOO11}, the authors extended Seidel's morphism to morphisms $\pi_1(\mathrm{Ham}(M,\omega)) \rightarrow QH_*^{b}(M,\omega)^\times$ and generalized in the toric case part of the results of McDuff and Tolman \cite{McDuffTolman06}. It would be interesting to see which information on the big quantum homology can be extracted from the present work.

\subsubsection*{Lagrangian setting}
\label{sec:lagrangian-setting}

The Seidel morphism has been extended to the Lagrangian setting in works by Hu and Lalonde \cite{HuLalonde}, and Hu, Lalonde, and Leclercq \cite{HuLalondeLeclercq}. Following McDuff and Tolman \cite{McDuffTolman06}, Hyvrier \cite{Hyvrier} computed the leading term of the Lagrangian Seidel elements associated to circle actions preserving some given monotone Lagrangian. He showed that when the latter is the real Lagrangian of a Fano toric manifold, all lower order terms vanish. It could be interesting to study the Lagrangian case in NEF toric manifolds, however the preliminary question of the structure of the lower order terms has to be tackled with different techniques than the ones used in \cite{Hyvrier} since they require the use of almost complex structures which generically lacks regularity. Let us also mention that Hyvrier's work as well as such a possible extension provide examples where one can apprehend the categorical refinement of the Lagrangian Seidel morphism due to Charette and Cornea \cite{CharetteCornea}.

\subsection*{Organization of the paper}

The paper is organized as follows. In Section \ref{sec:toric-manif-quant} we review the necessary background material, that is toric geometry, quantum homology, and Gromov--Witten invariants. Section \ref{sec:example} is devoted to the case of toric 4--dimensional NEF manifolds where we specify these notions. In Section \ref{sec:seidel-morphism-nef}, we precisely state the main theorems enumerating all the contributions to the Seidel morphism and the expression of the Seidel elements (Section \ref{sec:Seidelmorphism}) and we prove them (Section \ref{sec:proof-theor-refth} to Section \ref{sec:proof-theor-GWinv}). Finally, we describe explicit examples and applications mentioned in the introduction in Section \ref{sec:appl-expl-exampl}. In Appendix \ref{sec:BrutalComputationsSeidSMorph} we gather additional computations of Seidel's elements in more cases,  completing Theorem \ref{theo:SeidelsMorphism}.

\subsection*{Acknowledgments}
The authors would like to thank Eveline Legendre who first suggested to consider the total space of the fibration obtained via the clutching construction as a toric manifold, and Eduardo Gonz\'alez who suggested to investigate non-NEF Hirzebruch surfaces. They are also grateful to both of them as well as to Dusa McDuff and Siu-Cheong Lau for enlightening discussions at different stages of this work. They are particularly indebted to Chiu-Chu Melissa Liu for explaining to them the standard arguments appearing in the computations of the 0--point Gromov--Witten invariants of Proposition \ref{nopointsinvariants}. Finally, we would like to thank the referee for his/her hard and thorough work in reviewing the paper. We greatly appreciate his/her comments and we think that the modifications based on his/her suggestions have vastly improved the paper. Any remaining mistake is of course the responsibility of the authors. 

The present work is part of the authors's activities within CAST,  a Research Networking Program of the European Science Foundation.
The first author is partially funded by FCT/Portugal through project PEst-OE/EEI/LA0009/2013 and project PTDC/\-MAT/\-117762/2010. The second author is partially supported by ANR Grant ANR-13-JS01-0008-01.

%
%

\section{Toric manifolds and quantum homology}
\label{sec:toric-manif-quant}

\subsection{Toric geometry: the symplectic viewpoint}
\label{sec:toric-geom-sympl}

Recall that a closed symplectic $2m$--di\-men\-sional manifold $(M, \omega)$ is said to be toric if it is equipped with an effective Hamiltonian action of a $m$--torus $T$ and with a choice of a corresponding moment map $\Phi: M \to \mfr t^*$, where $\mfr t^*$ is the dual of the Lie algebra $\mfr t$  of the torus $T$. There is a natural integral lattice $\mfr t_\Z$ in $\mfr t$ whose elements $H$ exponentiate to circles $\Lambda_H$ in $T$, and hence also a dual lattice $\mfr t^*_\Z$ in $\mfr t^*$. The image $\Phi(M)$ is well-known to be a convex polytope $P$, called a {\it Delzant polytope}. It is {\it simple} ($m$ facets meet at each vertex), {\it rational} (the conormal vectors $\eta_i \in \mfr t$ to each facet may be chosen to be primitive and integral), and {\it smooth} (at each vertex $v \in P$ the conormals to the $m$ facets meeting at $v$ form a basis of the lattice $\mfr t_\Z$). We describe them as follows:
$$ P = P ({\kappa}) := \{ x \in {\mfr t^*} | \langle \eta_i, x \rangle \leq \kappa_i, \, i=1, \ldots, n\}, $$
where $P$ has $n$ facets $D_1, \hdots, D_n$ with {\it outward\footnote{It seemed more relevant to follow the same convention as in \cite{McDuffTolman06} even though the polytope is often defined by $P'=\{ x \in {\mfr t^*} | \langle \eta_i', x \rangle \geq - \kappa_i, \, i=1, \ldots, N\}$ for inward normals $\eta_i'$.} primitive integral conormals $\eta_i \in {\mfr t}_\Z$} and {\it support constants} $\kappa=(\kappa_1,\hdots, \kappa_n) \in \R^n$.

Delzant showed in \cite{Delzant88} that there is a one--to--one  correspondence between toric manifolds and Delzant polytopes given by the map that sends the  toric manifold $(M, \omega, T, \Phi)$ to the polytope $\Phi(M)$. (See \cite{KarshonKesslerPinsonnault} and the references therein for more details on this background material.)

\subsection{The clutching construction}\label{sec:TotalSpaceToric}
Let $\momeg$ be a closed symplectic manifold and $\Lambda=\{ \Lambda_\theta \}$ be a loop in $\ham{\momeg}$ based at identity. Denote by $M_\Lambda$ the total space of the fibration over $\bbcp^1$ with fiber $M$ which consists of two trivial fibrations over 2--discs, glued along their boundary via $\Lambda$. Namely, we consider $\bbcp^1$ as the union of the two 2--discs
\begin{align*}
  D_1 = \{ [1:z]\in \bbcp^1 \;|\, |z| \leq 1  \} \quad \mbox{and} \quad D_2 = \{ [z:1]\in \bbcp^1 \;|\, |z| \leq 1  \}
\end{align*}
glued along their boundary 
\begin{align*}
 \del D_1 =\{ [1:\ee^{-2i\pi\theta}] ,\, \theta\in[0,1[ \}=\{ [\ee^{2i\pi\theta}:1] ,\, \theta\in[0,1[ \}= \del D_2 \,.
\end{align*}
The total space is
\begin{align*}
  M_\Lambda = \quot{\big(M \times D_1 \bigsqcup  M \times D_2\big)}{\sim_\Lambda} \quad \mbox{with } (x,[1:\ee^{-2i\pi\theta}]) \sim (\Lambda_\theta(x),[\ee^{2i\pi\theta}:1]) .
\end{align*}
This construction only depends on the homotopy class of $\Lambda$. Moreover, $\Omega$, the family (parameterized by $S^2$) of symplectic forms of the fibers, can be ``horizontally completed'' to give a symplectic form on $M_\Lambda$, $\omega_{\Lambda,\kappa} = \Omega + \kappa\cdot \pi^*(\omega_0)$ where $\omega_0$ is the standard symplectic form on $S^2$ (with area 1), $\pi$ is the projection to the base of the fibration and $\kappa$ a big enough constant to make $\omega_{\Lambda,\kappa}$ non-degenerate. (Once chosen, $\kappa$ will be omitted from the notation.)

So we end up with the following Hamiltonian fibration: $$\xymatrix{\relax \momeg \ar@{^(->}[r] & (M_\Lambda,\omega_\Lambda) \ar[r]^{\pi} & (S^2, \omega_0).}$$
In \cite{McDuffTolman06}, McDuff and Tolman observed that, when $\Lambda$ is a circle action (with associated moment map $\Phi_\Lambda$), the clutching construction can be simplified since, then, $M_\Lambda$ can be seen as the quotient of $M \times S^3$ by the diagonal action of $S^1$, $e^{2\pi i \theta}\cdot(x,(z_1,z_2))=(\Lambda_\theta(x),(e^{2\pi i \theta} z_1, e^{2\pi i \theta} z_2))$. 
The symplectic form also has an alternative description in $M \times_{S^1} S^3$. Let $\alpha \in \Omega^1(S^3)$ be the standard contact form on $S^3$ such that $d\alpha = \chi^*(\omega_0)$ where $\chi: S^3 \to S^2$ is the Hopf map and $\omega_0$ is the standard area form on $S^2$ with total area 1. For all $c\in\bb R$, $\omega+cd\alpha -d(\Phi_\Lambda\alpha)$ is a closed 2--form on $M \times S^3$ which descends through the projection, $p \co M \times S^3 \rightarrow M \times_{S^1} S^3$, to a closed 2--form on $M_\Lambda$:
\begin{align}
  \label{eq:omega-c}
  \omega_c = p( \omega + c d\alpha -d(\Phi_{\Lambda} \alpha))
\end{align}
which extends $\Omega$. Now, if $c>\max \,\Phi_\Lambda$, $\omega_c$ is non-degenerate and coincides with $\omega_{\Lambda,\kappa}$ for some big enough $\kappa$.

In the case of a toric symplectic manifold fiber, the same arguments lead to the fact that  $(M_\Lambda,\omega_\Lambda)$ itself is toric. This fact has been already noticed  and used in other instances, e.g. by Gonz\'ales--Iritani \cite[Section 3.2]{GonzalesIritani11} and Chan--Lau--Leung--Tseng \cite[Section 4]{CLLT} in more general settings than what we will need in this paper, so that we only give here the specific statement which we will need, and refer the reader to the aforementioned works for details.

\begin{prop}\label{prop:TotalSpace-Toric}
Let $(M^{2m},\omega,T, \Phi)$ be a toric symplectic manifold with associated Delzant polytope $P$. Denote by $(M_\Lambda,\omega_\Lambda)$ the total space resulting from the clutching construction associated to $\Lambda$, Hamiltonian circle subgroup of $T$. $\Lambda$ admits a representative in $T$ given as the exponential of $\theta b $ where $\theta \in [0,1]$ and $b \in \mfr t_\Z$, the lattice of circle subgroups of $T$. 

Then there exist a $(m+1)$--dimensional torus $T_\Lambda \subset \ham(M_\Lambda,\omega_\Lambda)$, and a moment map $\Upphi_\Lambda\co M_\Lambda \rightarrow  \mfr t_\Lambda^* \simeq \mfr t^* \times\bb R$ such that $(M_\Lambda,\omega_\Lambda, T_\Lambda, \Upphi_\Lambda)$ is a toric symplectic manifold, whose associated Delzant polytope $P_\Lambda$ and integral lattice $\mfr t^{\Lambda}_\Z$  are  given by
\begin{align*}
    P_\Lambda = \{ (x, x_0)\in (\mfr t\times\bb R)^* \,|\; x\in P,\, c' + \langle x,b \rangle \leq x_0 \leq 0 \}  \; \mbox{ and } \; \mfr t^{\Lambda}_\Z =\mfr t_\Z \times \Z \subset \mfr t \times \R
\end{align*}
where $c'> \max \{ \langle x,b \rangle, x\in P\}$ coincides with the constant $c$ appearing in \eqref{eq:omega-c} above.

Moreover, the outward normals of $P_\Lambda$, $\eta_\Lambda$, are given in terms of the ones of $P,$ $\eta$, as follows:
\begin{align*}
    \eta_\Lambda = \{ (\eta_i,0), \eta_i\in \eta \} \cup \{(0,1),(b,-1)\} \,.
\end{align*}
\end{prop}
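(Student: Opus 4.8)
# Proof Proposal for Proposition~\ref{prop:TotalSpace-Toric}

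\textbf{The plan is to} realize $(M_\Lambda, \omega_\Lambda)$ concretely as the symplectic quotient $M \times_{S^1} S^3$ described in the excerpt, and then build up the torus action and moment map by combining the torus action already present on the fiber $M$ with an auxiliary circle action coming from the $S^3$ factor. Since we already know from the preceding discussion that $M_\Lambda = (M \times S^3)/S^1$ with the diagonal $S^1$-action $e^{2\pi i\theta}\cdot(x,(z_1,z_2)) = (\Lambda_\theta(x), (e^{2\pi i\theta}z_1, e^{2\pi i\theta}z_2))$, and that $\omega_\Lambda$ descends from $\omega + c\,d\alpha - d(\Phi_\Lambda \alpha)$ via the projection $p$, the core task is bookkeeping: to exhibit an effective Hamiltonian $(m+1)$-torus action with the correct moment polytope.

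\textbf{First I would} construct the torus $T_\Lambda$. On $M \times S^3$ consider the $T \times T^2$ action, where $T$ acts on the fiber $M$ as given and $T^2 = \{(e^{2\pi i s_1}, e^{2\pi i s_2})\}$ acts on $(z_1,z_2) \in S^3 \subset \C^2$ coordinatewise. The diagonal $S^1$ used to form the quotient sits inside $T \times T^2$ as $\theta \mapsto (\exp(\theta b), e^{2\pi i\theta}, e^{2\pi i\theta})$, using the representative $\Lambda_\theta = \exp(\theta b)$ with $b \in \mfr t_\Z$. The residual effective torus acting on the quotient $M_\Lambda$ is then $(T \times T^2)/S^1$, which is an $(m+1)$-torus; this is my candidate for $T_\Lambda$, and it comes equipped with the integral lattice $\mfr t_\Z^\Lambda = \mfr t_\Z \times \Z$ once I pick the complementary generators appropriately. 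Next I would write down the moment map for the $T \times T^2$ action on $(M \times S^3, \omega + c\,d\alpha)$: the $T$-part is $\Phi$, and the $T^2$-part on $S^3$ is (a normalization of) $(|z_1|^2, |z_2|^2)$ coming from $d\alpha = \chi^*(\omega_0)$; the term $-d(\Phi_\Lambda \alpha)$ shifts things so that the form descends. Reduction at the appropriate level of the $S^1$-moment map then produces $\Upphi_\Lambda$ on $M_\Lambda$ valued in $\mfr t_\Lambda^* \simeq \mfr t^* \times \R$, where the extra $\R$-coordinate $x_0$ records the $S^3$-direction.

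\textbf{Then I would} identify the image polytope. The constraint $x \in P$ persists from the fiber. The two inequalities $c' + \langle x, b\rangle \leq x_0 \leq 0$ arise from the sphere $S^3 = \{|z_1|^2 + |z_2|^2 = 1\}$: the two facets $z_1 = 0$ and $z_2 = 0$ of $S^3$ become the two new facets of $P_\Lambda$, and after the shift by $\Phi_\Lambda$ and the reduction they translate into these bounds, with the constant $c' = c > \max\{\langle x, b\rangle : x \in P\}$ guaranteeing nondegeneracy exactly as the condition $c > \max \Phi_\Lambda$ did in \eqref{eq:omega-c}. To read off the outward conormals $\eta_\Lambda$, I would rewrite each facet inequality in the form $\langle \eta, (x,x_0)\rangle \leq \text{const}$: the old facets $\langle \eta_i, x\rangle \leq \kappa_i$ become $\langle (\eta_i, 0), (x,x_0)\rangle \leq \kappa_i$, giving conormals $(\eta_i, 0)$; the inequality $x_0 \leq 0$ gives $(0,1)$; and $c' + \langle x, b\rangle \leq x_0$, i.e. $\langle b, x\rangle - x_0 \leq -c'$, gives $(b, -1)$. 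I would then check these conormals are primitive and integral in $\mfr t_\Z^\Lambda = \mfr t_\Z \times \Z$ and that the Delzant (smoothness) condition at each vertex holds, which follows from the smoothness of $P$ together with the unimodular nature of the two new conormals relative to the old basis.

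\textbf{The main obstacle} I expect is not any single hard computation but rather the careful normalization needed to make the constant $c'$ in the polytope match the constant $c$ in \eqref{eq:omega-c} exactly, and to verify that the quotient symplectic form obtained by Marsden--Weinstein reduction genuinely coincides with $\omega_\Lambda = p(\omega + c\,d\alpha - d(\Phi_\Lambda\alpha))$ rather than merely being cohomologous or a rescaling. Getting the sign conventions right for the outward (as opposed to inward) normals, and confirming effectiveness and the Delzant conditions for $T_\Lambda$, are the places where errors most easily creep in. Since the excerpt explicitly defers the full general construction to \cite{GonzalesIritani11} and \cite{CLLT}, I would lean on those references for the parts of the argument that are insensitive to dimension and concentrate the verification on matching my explicit formulas for $P_\Lambda$, $\mfr t_\Z^\Lambda$, and $\eta_\Lambda$ with the reduction picture above.
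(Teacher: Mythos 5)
The paper does not actually prove this proposition: immediately before the statement it recalls the model $M_\Lambda = M\times_{S^1}S^3$ with the form $\omega_c$ of \eqref{eq:omega-c} and then explicitly defers the verification that the total space is toric to Gonz\'alez--Iritani and Chan--Lau--Leung--Tseng. Your sketch is precisely the argument those references carry out (and the one the paper tacitly relies on): present $M_\Lambda$ as a symplectic quotient, let the residual $(m+1)$--torus $(T\times T^2)/S^1$ act, and read off $P_\Lambda$ and the conormals from the surviving components of the moment map. The construction is sound, and you correctly identify the only delicate points. Two remarks to tighten it: the reduction is cleaner if performed on $M\times\C^2$, with $M\times S^3$ arising as the relevant level set of the diagonal $S^1$ moment map, since $M\times S^3$ is odd-dimensional and only carries the closed degenerate form $\omega + c\,d\alpha - d(\Phi_\Lambda\alpha)$; and when matching the constant $c'$ with $c$ one should also check that the top and bottom facets of $P_\Lambda$ are disjoint, i.e. that $c'+\langle x,b\rangle<0$ on all of $P$ --- this is exactly the nondegeneracy condition $c>\max\Phi_\Lambda$ from \eqref{eq:omega-c}, and it is also what ensures the Delzant condition at the bottom vertices, where the conormals $\{(\eta_{i_1},0),\dots,(\eta_{i_m},0),(b,-1)\}$ have determinant $\pm 1$ by column reduction against the basis $\{\eta_{i_1},\dots,\eta_{i_m}\}$ of $\mfr t_\Z$.
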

The polytopes $P$ and $P_\Lambda$ are  illustrated by Figure~\ref{fig:Pb}. The upper and lower facets of $P_\Lambda$ correspond to two copies of $P$, the former horizontal, the latter orthogonal to $(b,-1)\in \mfr t^*\times \bb R$.

\begin{figure}
  \centering
  \includegraphics{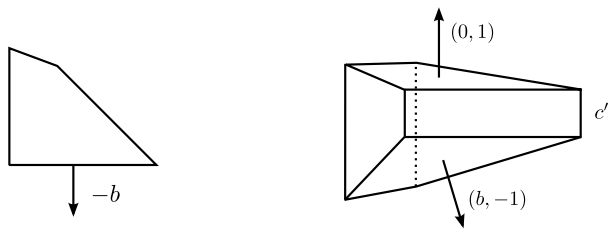}
  \caption{The polytopes associated to the fiber $M$ (left) and to the total space $M_\Lambda$ (right)}
  \label{fig:Pb}
\end{figure}

\subsection{Toric geometry: the algebraic viewpoint} \label{sec:toric-geom-algebr}

We now briefly review toric varieties since we will use this viewpoint extensively. Good basic references are Cox--Katz \cite{Cox-Katz99} and Batyrev \cite{Batyrev93}. There is also a good summary of the definition and some properties of smooth toric varieties in Spielberg \cite{Spielberg00}. In what follows we mainly use his notation.

Let $m > 0$ be a positive integer, $\mfr t_\Z = \Z^m$ be the $m$--dimensional integral lattice and $\mfr t_\Z^* = \mathrm{Hom}(\mfr t_\Z, \Z)$ be its dual space. Moreover, let $\mfr t = \mfr t_\Z \otimes_\Z \R$ and $\mfr t^* = \mfr t_\Z^* \otimes_\Z \R$ be the $\R$--scalar extensions of $\mfr t_\Z$ and $\mfr t_\Z^*,$ respectively.

A convex subset $\sigma \subset \mfr t$ is called a {\it regular $k$--dimensional cone} ($1\leq k\leq m$) if there exists  a $\Z$--basis of $\mfr t_\Z$, $\{\eta_1, \hdots, \eta_m\}$, such that the cone $\sigma$ is generated by $\eta_1, \hdots, \eta_k.$ The vectors $\eta_1, \hdots, \eta_k \in \mfr t_\Z$ are the {\it integral generators of $\sigma$}. If $\sigma'$ is a (proper) face of $\sigma$, we will write $\sigma' \prec  \sigma$. A finite system $\Sigma = \{ \sigma_1, \hdots \sigma_s\}$ of regular cones in $\mfr t$ is called a regular {\it $m$--dimensional fan} of cones if any face $\sigma'$ of a cone $\sigma \in \Sigma$ is in the fan and any intersection of two cones $\sigma_1, \sigma_2 \in \Sigma$ is again in the fan. A fan $\Sigma$ is called a {\it complete fan} if $\mfr t = \cup_i \sigma_i$. The $k$--skeleton $\Sigma^{(k)}$ of the fan $\Sigma$ is the set of all $k$--dimensional cones in $\Sigma$.  A subset of the 1--skeleton of a fan is called a {\it primitive collection} of $\Sigma$ if it is not the set of generators of a cone in $\Sigma$, while any of its proper subset is. We will denote the set of primitive collections of $\Sigma$ by $\mathcal P$. 

Suppose the 1--skeleton of $\Sigma$ is given by $\eta_1, \hdots, \eta_d$. Let $z_1, \hdots, z_d$ be a set of coordinates in $\C^d$ and let $\iota \colon \C^d \to \mfr t_\Z \otimes_\Z \C$ be a linear map such that $\iota (z_i)= \eta_i$. For each primitive collection $P=\{ \eta_{i_1}, \hdots, \eta_{i_p}\}$ we define a $(d-p)$--dimensional affine subspace in $\C^d$ by
$$ A(P):= \{ (z_1, \hdots, z_d) \in \C^d \mid \ z_{i_1}= \hdots=z_{i_p}=0 \}. $$  
Moreover, we define the set $U(\Sigma)$ to be the open algebraic subset of $\C^d$ given by 
$$ U(\Sigma)= \C^d \; \backslash \bigcup_{p \in \mathcal P} A(P).$$
The map $\iota \colon \C^d \to \mfr t_\C$ induces a map between tori $(\C^*)^d \to (\C^*)^m$ that we will also call $\iota$. Its kernel, $D(\Sigma) := \ker (\iota \colon (\C^*)^d \to (\C^*)^m)$, is a $(d-m)$--dimensional subtorus. Then the quotient $$X_\Sigma := U(\Sigma) /D(\Sigma)$$ is called the {\it toric manifold associated to $\Sigma$}. Note that there is a torus  of dimension $m$ acting on $X_\Sigma$. Moreover, Delzant \cite{Delzant88} showed that if  $X_\Sigma$ is a projective simplicial toric variety then it can be constructed as a symplectic quotient and therefore it is endowed with a symplectic form $\omega$ (it is also endowed with an action of a $m$--dimensional torus). From the moment polytope of this symplectic toric manifold it is possible to recover the fan $\Sigma$. However, as explained in \cite[Part B]{Cannas03}, changing the cohomology class of the symplectic form corresponds to changing the lengths of the edges of the polytope. The size of the faces of a polytope cannot be recovered from the fan which only encodes the combinatorics of the faces. Hence, the fan does not give the cohomology class of the symplectic form. 

Standard results about toric manifolds explain how to obtain the cohomology ring of the toric variety $X_\Sigma$. Assume the moment map $\Phi: X_\Sigma  \to \mfr t^*$ is chosen so that each of its components is mean-normalised. Let $P_\Sigma \subset \mfr t^*$ be the image of the moment map.  Let $D_1, \hdots, D_n$ be the facets of $P$ (the codimension--1 faces), and let $\eta_1, \hdots, \eta_n \in \mfr t_\Z$ denote the outward primitive integral normal vectors. Let $C$ be the set of subsets $I=\{ i_1, \hdots, i_k \} \subseteq \{1, \hdots,n \}$ such that $1\leq k \leq m$ and $D_{i_1} \cap \hdots \cap D_{i_k} \neq \emptyset.$ Consider the two following ideals in $\Q[Z_1, \hdots,Z_n]$:
$$ {\Lin}(\Sigma) = \left\langle \sum (x, \eta_i)Z_i \mid x \in \mfr t_\Z^* \right\rangle \quad \mbox{and}  \quad {\mathrm {SR}}(\Sigma)= \langle Z_{i_1}\hdots Z_{i_k} \mid \{ i_1, \hdots, i_k \} \notin C\rangle.$$
The ideal $ {\Lin}(\Sigma)$ is generated by linear relations and the ideal ${\mathrm {SR}}(\Sigma)$ is called the {\it Stanley--Reisner ideal.}
A subset $I \subseteq \{1, \hdots,n \}$ is called {\it primitive} if $I$ is not in $C$ but every proper subset is. Clearly, 
$$ {\mathrm {SR}}(\Sigma)= \langle Z_{i_1}\hdots Z_{i_k} \mid \{i_1, \hdots, i_k \} \subseteq \{1, \hdots,n \}  \mbox{ is primitive} \rangle.$$
The map which sends $Z_i$  to the Poincar\'e dual of $\Phi^{-1}(D_i)$ (which we shall also denote by $Z_i \in H^2(X_\Sigma;\Q)$) induces an isomorphism  
\begin{equation}\label{cohomologyring}
H^*(X_\Sigma; \Q) \cong \R[Z_1, \hdots, Z_n]/ \langle{\mathrm {Lin}}(\Sigma)+ {\mathrm {SR}}(\Sigma)\rangle.
\end{equation}
Moreover, there is a natural isomorphism between $H_2(X_\Sigma; \Z)$ and the set of tuples $(a_1, \hdots, a_n) \in \Z^n$ such that $\sum a_i\eta_i=0$, under which the pairing between such an element of $H_2(X_\Sigma; \Z)$ and $Z_i$ is $a_i$. The linear functional $\eta_i$ is constant on $D_i$ and let $\eta_i(D_i)$ denote its value. Under the isomorphism of \eqref{cohomologyring}  we have 
\begin{equation}\label{ChernClass}
[\omega]= \sum_i \eta_i(D_i)Z_i  \quad \mbox{and} \quad c_1(X_\Sigma)= \sum_iZ_i.
\end{equation}
Dually, let $R(\Sigma) \subset \Z^n$ be the subgroup of $\Z^n$ defined by 
\begin{equation}\label{homology}
R(\Sigma) :=  \{ (\gamma_1, \hdots, \gamma_{n}) \in \Z^{n} \, |\,  \gamma_1 \eta_1 + \hdots + \gamma_{n}\eta_{n}=0 \} \cong \Z^{n-m}.
\end{equation}
Then the group $R(\Sigma)$ is canonically isomorphic to $H_2(X_\Sigma; \Z)$.

\subsection{Small quantum homology and Gromov--Witten invariants}\label{sec:quantum-homology}
Except for our application in terms of the Landau--Ginzburg potential in Section \ref{sec:appl-expl-exampl}, we will work with the (small) quantum homology ring with coefficients in the ring $\Pi:= \Pi^{\mathrm{univ}}[q,q^{-1}]$. The variable $q$ is of degree 2 and $ \Pi^{\mathrm{univ}}$ is a generalised Laurent series ring in a variable of degree 0: 
\begin{align}
  \label{eq:pi-univ}
  \Pi^{\mathrm{univ}}:= \left\{ \sum_{\kappa \in \R} r_\kappa t^\kappa \,\big|\, r_\kappa \in \Q, \ \#\{ \kappa > c\mid r_\kappa \neq 0\} < \infty, \forall c \in \R \right \} \,.
\end{align}
The quantum homology $\QH_*(M; \Pi) =  H_*(M, \Q) \otimes_\Q \Pi$ 
is $\Z$--graded  so that $\deg (a \otimes q^d t^\kappa)= \deg (a) +2d$ with $a \in H_*(M)$. 
The {\it quantum intersection product} $a*b \in \QH_{i+j -\dim M}(M; \Pi)$, of classes $a \in H_i(M)$ and $b \in H_j(M)$ has the form 
$$a*b = \sum_{B \in H_2^S(M;\Z)} (a*b)_B \otimes q^{-c_1(B)}t^{-\omega(B)},$$
where $H_2^S(M;\Z)$ is the image of $\pi_2(M)$ under the Hurewicz map. The homology class $(a*b)_B \in H_{i+j-\dim M +2c_1(B)}(M)$ is defined by the requirement that 
$$(a*b)_B \cdot_M c = \GW^M_{B,3}(a,b,c) \quad \mbox{ for all } c \in H_*(M).$$   
In this formula $ \GW^M_{B,3}(a,b,c) \in \Q$ denotes the Gromov--Witten invariant that counts the number of spheres in $M$ in class $B$ that meet cycles representing the classes $a,b,c \in H_*(M)$. The product $*$ is extended to $\QH_*(M)$ by linearity over $\Pi$, and is associative. It also respects the $\Z$--grading and gives $\QH_*(M)$ the structure of a graded commutative ring, with unit $[M]$.

Gromov--Witten invariants can also be interpreted as homomorphisms 
\begin{align*}
 & \GW_{A,k}^M \co H^*(M;\Q)^{\otimes k} \otimes H_*(\overline{\mathcal M}_{0,k}; \Q) \longrightarrow \Q \\
 & \GW_{A,k}^M(a_1, \hdots, a_k; \beta)= \int_{\overline{\mathcal M}_{0,k}(A;J)} \mathrm {ev}_1^*a_1 \cup \hdots \mathrm {ev}_k^*a_k \cup \pi^*\PD (\beta),
\end{align*}
where $\overline{\mathcal M}_{0,k}(A;J)$ is the compactified moduli space of $J$--holomorphic spheres with $k$ marked points in $M$ representing the homology class $A$.  
Let us recall that in general $\GW^M_{A,k}(a_1, \ldots, a_k)$ is the homomorphism
\begin{align*}
 \GW_{A,k}^M \co H^*(M;\Q)^{\otimes k} \rightarrow \Q \,, \quad  (a_1, \hdots, a_k) \mapsto \GW_{A,k}^M(a_1, \hdots, a_k; [\overline{\m M}_{0,k}])
\end{align*}
so that when $k=3$, $\GW_{A,3}^M(a_1, a_2, a_3)=\GW_{A,3}^M(a_1, a_2, a_3; \pt)$.

For easy reference, we gather here the properties of Gromov--Witten invariants which will be used explicitly at several places in the computations of Section \ref{sec:seidel-morphism-nef}: The first two are extracted from \cite[Proposition 7.5.6]{McDuffSalamon04} and the third is the particular case of \cite[Theorem 7.5.10]{McDuffSalamon04} for the invariants  $\GW^M_{A,4}(a_1, \ldots, a_4; \pt) = \GW^{M,\{ 1,2,3,4 \}}_{A,4}(a_1, \ldots, a_4)$ (see  \cite[Remark 7.5.1.(vi)]{McDuffSalamon04}) when $k=4$.
\begin{prop}\label{prop:axiom-for-GW-inv}
  Let $(M,\omega)$ be a semipositive compact symplectic manifold, $A \in H_2(M;\bb Z)$, $k \geq 1$, and $a_1, \ldots, a_k \in H^*(M;\Q)$. Then the following properties hold.
  \begin{itemize}
  \item[] \textbf{(Divisor)} If $(A,k) \neq (0,3)$ and $\mathrm{deg}(a_k)=2$ then
    \begin{align*}
      \GW^M_{A,k}(a_1, \ldots, a_k) = \GW^M_{A,k-1}(a_1, \ldots, a_{k-1}) \cdot \int_A a_k \,.
    \end{align*}
  \item[] \textbf{(Zero)} If $k \neq 3$ then $\GW^M_{0,k}=0$. If $k=3$ then
    \begin{align*}
      \GW^M_{0,3}(a_1,a_2,a_3)=\int_M a_1 \cup a_2 \cup a_3 \,.
    \end{align*}
  \item[] \textbf{(Splitting)} If $k=4$ then $\GW^M_{A,4}(a_1, \ldots, a_4; \pt)$ is equal to
    \begin{align*}
        \sum_{A=A_0+A_1} \sum_{\nu,\mu} \GW^M_{A_0,3}(a_1,a_2,e_\nu) \, g^{\nu\mu} \, \GW^M_{A_1,3}(e_\mu,a_3,a_4) 
    \end{align*}
where $(e_{\nu})_{\nu}$ is a basis of $H^*(M;\Q)$, $g_{\nu\mu}$ are the coefficients of the cup-product matrix: $g_{\nu\mu}=\int_{M} e_\nu \cup e_\mu$, and $g^{\nu\mu}$ the coefficients of its inverse.
  \end{itemize}
\end{prop}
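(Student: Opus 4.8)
The plan is to recognize these three identities as the standard \emph{divisor}, \emph{fundamental class}, and \emph{splitting} axioms of genus-zero Gromov--Witten theory, and to extract them in the precise form needed from the semipositive framework developed in \cite[Chapter 7]{McDuffSalamon04}. The standing semipositivity hypothesis is exactly what guarantees that the moduli spaces $\overline{\mathcal M}_{0,k}(A;J)$ carry well-defined fundamental classes and that the invariants $\GW^M_{A,k}$ are defined without recourse to virtual techniques; once this is in place, the three properties follow from the geometry of these moduli spaces together with their forgetful and gluing maps. So concretely I would invoke \cite[Proposition 7.5.6]{McDuffSalamon04} for the first two and specialize \cite[Theorem 7.5.10]{McDuffSalamon04} to $k=4$ with the constraint $\PD([\overline{\mathcal M}_{0,4}])$ for the third. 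For the reader's convenience I would then sketch the geometric content of each.

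For \textbf{(Divisor)}, the tool is the forgetful map $\pi \co \overline{\mathcal M}_{0,k}(A;J) \to \overline{\mathcal M}_{0,k-1}(A;J)$ dropping the last marked point (and stabilizing), which is defined precisely because $(A,k)\neq(0,3)$ rules out the unstable domain. Its generic fibre is the domain curve, and when $\deg a_k=2$ the class $\mathrm{ev}_k^* a_k$ integrates over this fibre to $\int_A a_k$; the projection formula then factors $\int_A a_k$ out of the correlator, leaving $\GW^M_{A,k-1}(a_1,\dots,a_{k-1})$. For \textbf{(Zero)}, the class $A=0$ forces constant maps, so $\overline{\mathcal M}_{0,k}(0;J)\cong M\times\overline{\mathcal M}_{0,k}$: the Deligne--Mumford factor is empty for $k<3$ and a single point for $k=3$, in which case all evaluation maps coincide and the correlator collapses to the classical triple intersection $\int_M a_1\cup a_2\cup a_3$, while for $k>3$ the pulled-back constraints have no component along the positive-dimensional Deligne--Mumford factor and the invariant vanishes.

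The only analytically substantive axiom is \textbf{(Splitting)}. Here one uses that $\overline{\mathcal M}_{0,4}\cong \bbcp^1$, so the point class is homologous to each of its three boundary points; choosing the boundary point separating $\{1,2\}$ from $\{3,4\}$, the constraint $\pi^*\PD(\pt)$ restricts the count to stable maps with a node splitting the domain into two components carrying these two pairs of marked points. This boundary locus is, over all decompositions $A=A_0+A_1$, the fibre product of $\overline{\mathcal M}_{0,3}(A_0;J)$ and $\overline{\mathcal M}_{0,3}(A_1;J)$ along the two evaluation maps at the node; inserting the Poincar\'e dual of the diagonal, namely $\sum_{\nu,\mu} g^{\nu\mu}\, e_\nu\otimes e_\mu$, converts this fibre product into exactly the double sum of three-point invariants displayed in the statement. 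The main obstacle is justifying the gluing/degeneration identification of this boundary stratum with the fibre product and the compatibility of fundamental classes across it; under semipositivity this is precisely the content of the cited gluing theorem of \cite{McDuffSalamon04}, so no new analysis is required, and the task reduces to reading off the specialization relevant to the computations of Section \ref{sec:seidel-morphism-nef}.
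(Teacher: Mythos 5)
Your proposal is correct and follows essentially the same route as the paper: the paper gives no independent proof, stating only that the first two properties are extracted from \cite[Proposition 7.5.6]{McDuffSalamon04} and the third is the specialization of \cite[Theorem 7.5.10]{McDuffSalamon04} to $k=4$ with the constraint $\pt\in H_*(\overline{\m M}_{0,4})$, exactly the citations you invoke. Your additional geometric sketches of the divisor, zero, and splitting axioms are the standard ones and are accurate, so nothing further is needed.
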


\subsection{Gromov--Witten invariants of toric manifolds}
\label{SpielbergFormula}
In this section we present Spielberg's formula from \cite[Corollary 8.4]{Spielberg00}  for the computation of Gromov--Witten invariants of toric manifolds, which we will use in Section \ref{mainexample}. Note that Liu proved a more general result in \cite{Melissa13}, however since we only need to compute genus--0 Gromov--Witten invariants we will use  Spielberg's formulation and notation. 
\begin{defi} \cite[Definition 6.4]{Spielberg00}
Let $\Sigma$ be a complete regular fan in $\Z^m$ and let $P_\Sigma$ be its dual polytope. A graph  $\Gamma$ is a finite 1--dimensional CW--complex with the following decorations: 
\begin{enumerate}[1]
\item[1.] A map $\sigma: \Vertices(\Gamma) \to \Sigma^{(m)}$ mapping each vertex $\vb$ of the graph to a vertex $\sigma(\vb)$ of $P_\Sigma$; 
\item[2.] A map $d: \Edge(\Gamma) \to \Z_{>0}$ representing multiplicities of maps;
\item[3.] A map $S: \Vertices(\Gamma) \to  \mathfrak{B}(\{1,\dots, p\})$ associating to each vertex a set of marked points. 
\end{enumerate}
These decorations are subject to the following compatibility conditions: 
\begin{enumerate} [(a)]
\item If an edge  $e \in \Edge(\Gamma)$ connects two vertices $ \vb_1,\vb_2 \in \Vertices (\Gamma)$  labeled $\sigma(\vb_1)$ and $\sigma(\vb_2)$, then the two cones must be different and have a common  $(m-1)$--dimensional face: $\sigma(\vb_1) \cap \sigma(\vb_2) \in \Sigma^{(m-1)}$;
\item The graph represents a stable map with homology class $A$;
\item The CW--complex $\Gamma$ contains no loops; 
\item For any two vertices $\vb_1,\vb_2 \in \Vertices (\Gamma)$, the sets of associated marked points are disjoint: $S(\vb_1) \cap S(\vb_2) = \emptyset $;
\item Every marked point is associated with some vertex. 
\end{enumerate}
\end{defi}

The following notation will be useful to understand the statement of the theorem. 
We define the following subset of $\Vertices$: 
$$ \Vertices_{t,s} := \left\{ \vb \in \Vertices  \,\big|\, \val(\vb)=t, \, \deg(\vb) = t+s \right\},$$
where $\val: \Vertices (\Gamma) \to \Z_{>0}$ is the function assigning to each vertex the number of outgoing edges and $\deg: \Vertices (\Gamma) \to \Z_{>0}$ assigns to each vertex the number of its special points:
$$\deg (\vb) = \# S(\vb) + \# \{ e \in \Edge (\Gamma) \, | \, \vb \in \partial e \}$$
where $\partial e = \{ \vb_1(e), \vb_2(e)\} $ associates to an edge $e$ the two vertices $\vb_1(e), \vb_2(e)$ it connects. 

We also need the following result: 
\begin{lemm}\cite[Lemma 6.10]{Spielberg00} \label{weights}
Let $\sigma_1, \sigma_2 \in \Sigma^{(m)}$ be two $m$--cones in $\Sigma$ that have a common $(m-1)$--face $  \tau \in \Sigma^{(m-1)}$.  Let $\eta_{i_1}, \hdots, \eta_{i_{m-1}}$ be the generators of the common face  $\tau$, such that 
$$ \sigma_1 = \langle  \eta_{i_1}, \hdots, \eta_{i_{m-1}},\eta_{\tau(1)} \rangle \quad \mbox{and} \quad \sigma_2 = \langle  \eta_{i_1}, \hdots, \eta_{i_{m-1}},\eta_{\tau(2)}\rangle.$$ 
Let $\omega_1, \hdots, \omega_{n}$ be the weights of a diagonal action of $(\C^*)^{n}$ on $\C^{n}$ with respect to the standard basis.  The induced $\C^*$--action on the invariant 2--sphere $V_{\tau}$ has weight $\omega^{\sigma_1}_{\sigma_2}$ at the point $V_{\sigma_1}$ given by 
$$ \omega^{\sigma_1}_{\sigma_2} := \sum_{\ell=1}^{n} \langle \eta_\ell, u_m \rangle \omega_\ell, $$
where $\left \{u_1, \hdots, u_m \right\}$ is a basis of  $\mfr t_\Z^*$ dual to  $ \left \{\eta_{i_1}, \hdots, \eta_{i_{m-1}},\eta_{\tau(1)}\right \}$.
\end{lemm}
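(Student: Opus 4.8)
The plan is to reduce the statement to a local computation at the fixed point $V_{\sigma_1}$ in the affine chart attached to $\sigma_1$, to read off the intrinsic weight of the torus $T=(\C^*)^m$ acting on $X_\Sigma$ along the tangent line to $V_\tau$, and then to rewrite that weight in terms of the diagonal weights $\omega_1,\dots,\omega_n$ via the very map $\iota$ used to define $X_\Sigma$.

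First I would recall the geometry dictated by the orbit--cone correspondence: the maximal cone $\sigma_1 \in \Sigma^{(m)}$ corresponds to a $T$--fixed point $V_{\sigma_1}$, the common facet $\tau \in \Sigma^{(m-1)}$ corresponds to a $T$--invariant rational curve $V_\tau \cong \bbcp^1$, and $V_{\sigma_1}, V_{\sigma_2}$ are precisely its two $T$--fixed points. Since $\Sigma$ is regular, the generators $\{\eta_{i_1},\dots,\eta_{i_{m-1}},\eta_{\tau(1)}\}$ of $\sigma_1$ form a $\Z$--basis of $\mfr t_\Z$, so the chart $U_{\sigma_1}$ is isomorphic to $\C^m$ with $V_{\sigma_1}$ at the origin, the coordinate functions being the characters $\chi^{u_1},\dots,\chi^{u_m}$ associated with the dual basis $u_1,\dots,u_m$. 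Under this identification the curve $V_\tau$ is the coordinate axis cut out by $\chi^{u_1}=\dots=\chi^{u_{m-1}}=0$, i.e. the axis in the direction corresponding to the single generator $\eta_{\tau(1)}$ of $\sigma_1$ not lying in $\tau$.

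Next I would compute the intrinsic $T$--weight. Because $t \in T$ acts on the coordinate $\chi^{u_m}$ through the character $u_m$, the tangent line $T_{V_{\sigma_1}}V_\tau$ carries the weight $\pm u_m$, the sign being fixed by the outward--normal convention adopted throughout the paper. It then remains to express $u_m \in \mfr t_\Z^*$ in terms of $\omega_1,\dots,\omega_n$. The diagonal $(\C^*)^n$--action on $\C^n$ commutes with $D(\Sigma)$ and hence descends to $X_\Sigma$, realising the $T$--action through the torus map $\iota\co (\C^*)^n \to (\C^*)^m$ which sends the $\ell$--th factor to the one--parameter subgroup $\eta_\ell$. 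On characters this is the transpose $\iota^*$, sending $\chi \in \mfr t_\Z^*$ to $(\langle\eta_1,\chi\rangle,\dots,\langle\eta_n,\chi\rangle)$; pairing with the weight vector $(\omega_1,\dots,\omega_n)$ gives $\sum_{\ell=1}^n \langle\eta_\ell,\chi\rangle\,\omega_\ell$. Specialising to $\chi=u_m$ produces exactly the claimed value $\omega^{\sigma_1}_{\sigma_2}=\sum_{\ell=1}^n \langle\eta_\ell,u_m\rangle\,\omega_\ell$.

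I expect the only genuinely delicate point to be the bookkeeping of signs and conventions: one has to check that the outward--normal convention (rather than the usual inward one) yields $+u_m$ and not $-u_m$ as the tangent weight, and that the induced $(\C^*)^n$--action on $X_\Sigma$ does factor through $T$ precisely via $\iota$, so that pulling back along $\iota^*$ is the correct operation. Once these are pinned down, the remaining content is the elementary duality identity above, and since the statement is quoted from \cite{Spielberg00} one may alternatively simply invoke it.
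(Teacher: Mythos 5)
The paper does not prove this lemma at all: it is quoted verbatim from Spielberg \cite[Lemma 6.10]{Spielberg00} and used as a black box, so there is no internal proof to compare yours against. That said, your reconstruction is the standard argument and is essentially correct: the orbit--cone correspondence identifies $V_{\sigma_1}$ with the origin of the smooth chart $U_{\sigma_1}\cong\C^m$ whose coordinates are the characters $\chi^{u_1},\dots,\chi^{u_m}$ of the dual basis, $V_\tau$ becomes the coordinate axis attached to the generator $\eta_{\tau(1)}$ of $\sigma_1$ not lying in $\tau$, so the tangent weight is $\pm u_m$; and since the diagonal $(\C^*)^n$--action descends to $X_\Sigma=U(\Sigma)/D(\Sigma)$ precisely through $\iota$, the character $u_m$ pulls back to $(\langle\eta_1,u_m\rangle,\dots,\langle\eta_n,u_m\rangle)$, i.e.\ to $\sum_\ell\langle\eta_\ell,u_m\rangle\,\omega_\ell$ in the equivariant parameters. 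The one point you leave genuinely open is the sign, and here it is not entirely vacuous: this paper (following McDuff--Tolman) builds the fan from the \emph{outward} normals of the polytope rather than the inward ones of the usual normal-fan convention, so one must check that with these conventions the tangent weight along $V_\tau$ at $V_{\sigma_1}$ comes out as $+u_m$ (consistency check: the formula must give $\omega^{\sigma_2}_{\sigma_1}=-\omega^{\sigma_1}_{\sigma_2}$ modulo the relation defining $V_\tau$'s homology class, since the two fixed points of a $\C^*$--action on $\bbcp^1$ carry opposite weights, and the weight tables in Section 3.2 do satisfy this). If you pin that down, or simply invoke Spielberg as the paper does, the argument is complete.
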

\begin{coro}\cite[Corollary 6.11]{Spielberg00}
Let $e \in \Edge(\Gamma)$ and $\vb_1, \vb_2 \in \partial e$ be the vertices at its two ends. Let $\sigma_i = \sigma (\vb_i)$ be the $m$--cones of the vertices $\vb_i$ and $\tau= \sigma_1 \cap \sigma_2$ its common $(m-1)$--face, that are generated as in the Lemma above. For a stable 
map $(C; x_1, \hdots, x_p; f)$ fixed by the torus action, let $C_e$ be the irreducible component of $C$ corresponding to the edge $e$. Let 
$F:= (\vb_1,e) \in \Vertices(\Gamma) \times \Edge(\Gamma)$ be such that $\vb_1 \in \partial e$. At the point $p_F:= f^{-1}(V_{\sigma (\vb_1)})\cap C_e$, the pull back to $C_e$ of the torus action on $V_\tau$ has the weight $\omega_F$ at $p_F$: 
 $$ \omega_F := \frac{1}{d_e}\sum_{\ell=1}^{n} \langle \eta_\ell, u_m \rangle \omega_\ell,$$
 where $d_e$ is the multiplicity of the component $C_e$ and the vectors $u_i$ are as in the lemma above. 
\end{coro}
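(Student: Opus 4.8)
The plan is to deduce this statement directly from Lemma~\ref{weights} by a purely local analysis of weights under the ramified cover $f|_{C_e}\co C_e \to V_\tau$; the only new input beyond the lemma is how a weight transforms when pulled back along a degree-$d_e$ map.

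First I would record precisely what Lemma~\ref{weights} supplies: the invariant $2$--sphere $V_\tau$ carries an induced $\C^*$--action whose weight at the fixed point $V_{\sigma_1}$ equals $\omega_{\sigma_1}^{\sigma_2}=\sum_{\ell=1}^{n}\langle \eta_\ell, u_m\rangle\,\omega_\ell$. Since $p_F$ is, by definition, the preimage $f^{-1}(V_{\sigma_1})\cap C_e$, it then suffices to compare the weight of this $\C^*$--action at $V_{\sigma_1}$ with the weight of its pullback to $C_e$ at $p_F$, and to show the latter is the former divided by $d_e$.

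Next I would analyze the local structure of $f$ along $C_e$. Because the stable map $(C;x_1,\dots,x_p;f)$ is fixed by the torus action, each irreducible component is torus--invariant and $f$ maps $C_e$ into the invariant sphere $V_\tau$, with $d_e$ the degree of this map. As $C_e\cong\bbcp^1$ and $V_\tau\cong\bbcp^1$, and $V_\tau$ has exactly the two torus--fixed points $V_{\sigma_1},V_{\sigma_2}$, equivariance forces $f|_{C_e}$ to be the standard cyclic cover: in suitable coordinates $w$ on $C_e$ near $p_F$ and $z$ on $V_\tau$ near $V_{\sigma_1}$ one has $z=w^{d_e}$, totally ramified over $V_{\sigma_1}$. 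The weight computation is then immediate from the chain rule: if the $\C^*$--action scales $z$ with weight $\omega_{\sigma_1}^{\sigma_2}$, then equivariance of $z=w^{d_e}$ forces the pulled--back action to scale $w$ with weight $\omega_{\sigma_1}^{\sigma_2}/d_e$, so that $\omega_F=\tfrac{1}{d_e}\sum_{\ell=1}^{n}\langle \eta_\ell, u_m\rangle\,\omega_\ell$, as claimed.

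The main obstacle is the local normal form used in the previous step, namely the rigorous justification that $f|_{C_e}$ is the totally ramified cyclic cover $w\mapsto w^{d_e}$ with ramification index exactly $d_e$ at $p_F$. This is the standard description of torus--fixed stable maps to toric varieties, in the spirit of the fixed--point analysis of Kontsevich and of Graber--Pandharipande: one must check that a torus--equivariant map $\bbcp^1\to\bbcp^1$ from the edge component onto the invariant sphere, sending nodes and special points to fixed points, is conjugate to $w\mapsto w^{d_e}$. Once this local model is in place, the division of the weight by $d_e$ is entirely formal and the corollary follows.
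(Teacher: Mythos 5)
The paper does not prove this statement — it is quoted verbatim from Spielberg \cite[Corollary 6.11]{Spielberg00} — but your argument is correct and is precisely the standard fixed-point analysis underlying Spielberg's proof: the edge component maps onto the invariant sphere $V_\tau$ as the totally ramified equivariant cover $z=w^{d_e}$, and equivariance of $z=w^{d_e}$ divides the tangent weight $\omega^{\sigma_1}_{\sigma_2}$ from Lemma \ref{weights} by $d_e$. The only point deserving care, which you correctly flag, is the local normal form for torus-fixed stable maps (each contracted or invariant component, nodes and marked points at fixed points, edge components as cyclic covers), and that is exactly the Kontsevich/Graber--Pandharipande classification you invoke.
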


We will introduce some further notation, grouping together certain weights on a graph $\Gamma$.  We will write  $\sigma_1 \diamond \sigma_2$ for the property of $\sigma_1$ and $\sigma_2$  having a common $(m-1)$--dimensional proper face: 
$ \sigma_1 \diamond \sigma_2 \Longleftrightarrow \sigma_1, \sigma_2 \in \Sigma^{(m)} \ \mbox{and} \  \sigma_1 \cap \sigma_2 \in \Sigma^{(m-1)}.$
The \emph{total weight of a $m$--dimensional cone $\sigma$} is defined to be 
$$ \omega^{\sigma}_{\rm{total}}:= \prod_{\alpha \,:\, \alpha \diamond \sigma} \omega^\sigma_\alpha \,.$$
Finally, let $\alpha \in \Sigma^{(m)}$ be a $m$--cone in the fan $\Sigma$ that has a common $(m-1)$--face $\tau$ with $\sigma_1: \alpha \diamond \sigma_1$. Then $\alpha$ and $\sigma_1$ have  $(m-1)$ generators in common; let $\eta_{i_\alpha} \in \Sigma^{(1)}$ be the generator of $\sigma_1$ that is not a generator of $\alpha$. We then set
$\lambda_e^\alpha:= \gamma_{i_\alpha},$ where $(\gamma_1, \hdots, \gamma_n)$ represents the homology class of $\tau$ (see \eqref{homology}).  

 Since we are interested only in 1--point Gromov--Witten invariants we will give a simplified version of Spielberg's formula. 
 \begin{theo}\cite[Corollary 8.4]{Spielberg00} \label{mainSpielberg}
 The 1--point genus--0 Gromov--Witten invariants for a toric variety $X_\Sigma$ are given by
 $$GW_{A,1}^{X_\Sigma} (Z_\ell) = \sum_\Gamma \frac{1}{|A_\Gamma|} T_\Gamma \cdot S_\Gamma $$
 where $A_\Gamma$ is the automorphism group of the graph $\Gamma$, 
 \begin{align*}
  T_\Gamma  = & \prod_{t=1}^{\infty}\prod_{\vb \in \Vertices_{t,*}(\Gamma)}(\omega^{\sigma(\vb)}_{\rm{total}})^{t-1} \cdot \left(\prod_{i=1}^t \frac{1}{\omega_{F_i(\vb)}} \right) \cdot \left( \frac{1}{\omega_{F_1(\vb)}}+ \hdots +\frac{1}{\omega_{F_t(\vb)}} \right)^{t-3} \\
  & \cdot \prod_{\stackrel{e \in \Edge}{\partial e=\{ \vb_1,\vb_2\} }} \left( \frac{(-1)^m m^{2m}}{(m!)^2 (\omega^{\sigma_1}_{\sigma_2})^{2m}} \prod_{\substack{\alpha \,:\, \alpha \neq \sigma_2 \\ \text{and } \alpha  \diamond \sigma_1}}  \frac{\displaystyle \prod_{i=\lambda_{e}^{\alpha}+1}^{-1} \left( \omega^{\sigma_1}_{\alpha} -\frac{i}{m} \cdot \omega^{\sigma_1}_{\sigma_2} \right)}{\displaystyle \prod_{i=0}^{\lambda_{e}^{\alpha}} \left( \omega^{\sigma_1}_{\alpha} -\frac{i}{m}
  \cdot \omega^{\sigma_1}_{\sigma_2} \right)}\right)
\end{align*}
\begin{align*}
  S_\Gamma = & \left[ \prod_{t,s} \prod_{\vb \in \Vertices_{t,s}(\Gamma)} \left( \frac{1}{\omega_{F_1(\vb)}}+ \hdots +\frac{1}{\omega_{F_t(\vb)}} \right)^{s} \right] \cdot \prod_{k=1}^n (\omega_k^{\sigma(1)})^{l_k}
 \end{align*}
 and where
 \begin{itemize}
 \item[-] we use the convention $0^0=1$;
 \item[-]  $Z^l = Z_1^{l_1} \hdots Z_n^{l_n}$;
 \item[-]  $\sigma(1)$  is the fixed point the marked point is mapped to;
 \item[-]  we define $ \omega_k^{\sigma(1)}:= \left\{ \begin{array}{cl}
 0  & \mbox{if}  \quad \eta_k \notin \Sigma_{\sigma(1)}^{(1)}, \\
 \omega_\alpha^{\sigma(1)} & \mbox{if} \quad \alpha \diamond \sigma(1)  \ \mbox{and} \ \eta_k \in \Sigma_{\sigma(1)}^{(1)} \backslash \Sigma_{\alpha}^{(1)}.
 \end{array} \right. $ 
 \end{itemize}  
 \end{theo}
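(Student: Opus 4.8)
The statement is Spielberg's virtual localization formula, so the natural plan is to re-derive it by applying the Graber--Pandharipande virtual localization theorem to the moduli space of stable maps $\Mbar_{0,1}(X_\Sigma, A)$. First I would realize $X_\Sigma = U(\Sigma)/D(\Sigma)$ as a GIT/symplectic quotient of $\C^n$ and let the big torus $(\C^*)^n$ act on $\C^n$ with generic weights $\omega_1, \ldots, \omega_n$; this descends to a Hamiltonian action of an $m$--torus $T$ on $X_\Sigma$ whose fixed points are isolated and in bijection with the $m$--cones $\sigma \in \Sigma^{(m)}$ (the vertices $V_\sigma$ of $P_\Sigma$). The $T$--action lifts canonically to $\Mbar_{0,1}(X_\Sigma, A)$, and since $\eva_1^* Z_\ell$ admits an equivariant lift, the equivariant Gromov--Witten integral localizes to the $T$--fixed locus:
\[ \GW_{A,1}^{X_\Sigma}(Z_\ell) = \sum_{\Gamma} \int_{[\mathcal F_\Gamma]^{\vir}} \frac{\eva_1^* Z_\ell \,\big|_{\mathcal F_\Gamma}}{e^T(N^{\vir}_\Gamma)}. \]
Because $\GW_{A,1}^{X_\Sigma}(Z_\ell)$ is a number, it equals the non-equivariant specialization, so it suffices to identify the fixed-point contributions and verify that their sum is independent of the $\omega_i$.

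Second, I would classify the fixed loci. A $T$--fixed stable map has image contained in the $T$--invariant $1$--skeleton of $X_\Sigma$: each irreducible component is either contracted to a fixed point $V_\sigma$ or is a degree--$d_e$ cover of an invariant $\bP^1$, namely $V_\tau$ for $\tau \in \Sigma^{(m-1)}$, totally ramified over the two endpoints. Recording this combinatorics --- contracted components and the location of the marked point as vertices decorated by $\sigma\co \Vertices \to \Sigma^{(m)}$, covers as edges decorated by multiplicities $d\co \Edge \to \Z_{>0}$ --- reproduces exactly the decorated graphs $\Gamma$ defined above. The corresponding fixed locus $\mathcal F_\Gamma$ is a product of moduli spaces $\Mbar_{0,\deg(\vb)}$ of the contracted components, modulo the graph automorphisms, which accounts for the factor $1/|A_\Gamma|$.

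Third comes the core computation: the $T$--equivariant Euler class of the virtual normal bundle $N^{\vir}_\Gamma$, obtained by splitting the tangent--obstruction complex of stable maps into vertex, edge, and flag (node) pieces. The tangent representation of $T$ on $T_{V_\sigma} X_\Sigma$ is computed from the conormals and a dual basis exactly as in Lemma~\ref{weights}, giving the weights $\omega^{\sigma_1}_{\sigma_2}$, and the flag weights $\omega_F = \omega^{\sigma_1}_{\sigma_2}/d_e$ of Corollary 6.11 above. The edge factor is the equivariant Euler class of $H^0 - H^1$ of $f^* TX_\Sigma$ restricted to each covered $\bP^1$; a standard Euler--sequence computation on $V_\tau \cong \bP^1$ produces the product over $i$ of the terms $\omega^{\sigma_1}_{\alpha} - \frac{i}{m}\omega^{\sigma_1}_{\sigma_2}$, with the index range governed by $\lambda_e^\alpha = \gamma_{i_\alpha}$, together with the prefactor $(-1)^m m^{2m}/((m!)^2 (\omega^{\sigma_1}_{\sigma_2})^{2m})$. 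The node--smoothing tangent lines contribute the factors $1/\omega_{F_i(\vb)}$, and integrating the resulting $\psi$--classes over the vertex moduli $\Mbar_{0,t}$ yields the powers $\left(\sum_i 1/\omega_{F_i(\vb)}\right)^{t-3}$ and $\left(\sum_i 1/\omega_{F_i(\vb)}\right)^{s}$ via the genus--$0$ string/dilaton evaluations. Finally, restricting $\eva_1^* Z_\ell$ to the fixed point $V_{\sigma(1)}$ carrying the marked point gives $\prod_k (\omega_k^{\sigma(1)})^{l_k}$, since $Z_i$ restricts to the indicated equivariant weight. Collecting the vertex, edge, flag, and marking factors over each $\Gamma$ assembles $T_\Gamma \cdot S_\Gamma$.

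The hard part is the edge and node bookkeeping in the virtual normal bundle: correctly identifying which weights appear in $H^0$ versus $H^1$ of $f^* TX_\Sigma$ along the multiple covers (hence the precise index ranges $\lambda_e^\alpha + 1 \le i \le -1$ and $0 \le i \le \lambda_e^\alpha$), matching the node weights so that each flag is counted exactly once, and tracking the signs and $m!$ factorials so that the apparent dependence on the generic weights $\omega_1, \ldots, \omega_n$ cancels and the sum over graphs is the claimed rational number.
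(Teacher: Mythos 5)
The paper offers no proof of this statement: it is quoted verbatim from \cite{Spielberg00} (Corollary 8.4) and used as a black box. Your outline --- Graber--Pandharipande virtual localization on $\Mbar_{0,1}(X_\Sigma,A)$, classification of the torus-fixed loci by decorated graphs, and the vertex/edge/flag decomposition of $e^T(N^{\vir}_\Gamma)$ together with the restriction of $\eva_1^*Z_\ell$ to $V_{\sigma(1)}$ --- is precisely the strategy of the proof in that cited source, so it is the same approach, correctly identified, though of course the edge and flag bookkeeping you flag as the hard part is exactly what occupies most of Spielberg's argument.
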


%
%

\section{Toric 4--dimensional NEF manifolds} \label{sec:example}
Now we restrict ourselves to the case of toric 4--dimensional NEF manifolds. We explain the construction of $M_\Lambda$ and its properties including its cohomology ring. This will play a very important role in the next section. 

\subsection{Toric and homological data}
\label{sec:toric-topol-data}

We consider a 4--dimensional toric manifold $(M, \omega, T, \Phi)$ and its moment 2--dimensional Delzant polytope $P$. Assume it has $n$ facets that we denote by $D_i$, $i=1, \hdots,n$. Let $v_1, \hdots, v_n$ denote the outward primitive integral normal vectors and let $\Lambda_i$ denote the circle action corresponding to $v_i$, that is, $\Lambda_i$ is the circle action whose moment map is given by $\Phi_{\Lambda_i}:= \langle v_i, \Phi (\cdot) \rangle$. 

We pick a $\omega$--tame almost complex structure $J$ and denote by $c_1(M)$ the first Chern class of $(TM,J)$. We assume that $(M,J)$ is NEF, that is $\langle c_1(M) , B \rangle \geq 0$ for every class $B \in H_2(M, \Z)$ with a $J$--pseudo-holomorphic representative.

Moreover, we consider the particular case 
when there are at most 2 (consecutive) facets corresponding to spheres with vanishing first Chern number and assume their normal vectors  are $v_{n}$ and $v_{1}$ (recall that we denote $v_{n+1}$ by $v_1$ as for the $D_i's$). Since the polytope $P$ is Delzant we can assume that the facets $D_{n-1}$ and $D_n$ are perpendicular. Moreover, 
as explained in \cite[Section 2.5]{Fulton}, the vectors $v_i$ satisfy the relations 
\begin{equation}\label{relationvectorsI}
 v_{i-1}+v_{i+1} = d_i v_i,
 \end{equation} 
 where $-d_i =D_i \cdot D_i$ denotes the self-intersection of the facet $D_i$. Since the first Chern number vanishes on the facets $D_n$ and $D_1$ it follows that $D_n \cdot D_n = D_1 \cdot D_1 =-2$. Therefore we can assume that the vectors $v_i$ satisfy the following relations:
\begin{equation}\label{vi}
v_{n-1}=-e_2, \quad v_{n}=-e_1, \quad v_1=e_2-2e_1 \quad \mbox{and} \quad v_2=2e_2-3e_1,
\end{equation}
where the vectors $e_1,e_2$ form the canonical basis of $\Z^2$.

Next, using the clutching construction described in Section \ref{sec:TotalSpaceToric}, we construct the manifold $M_{\Lambda_{n}}$ associated to the loop $\Lambda_{n}$ which we will denote simply by $M_{\Lambda}$ in order to simplify the notation. As we noticed in Proposition \ref{prop:TotalSpace-Toric}, $M_\Lambda$ is a toric manifold with moment map $\Phi_\Lambda$. The moment image is a 3--dimensional polytope $P_\Lambda$ with $n+2$ facets which we denote by $D_1^\Lambda, \hdots, D_n^\Lambda, D_b^\Lambda, D_t^\Lambda$ with corresponding outward primitive integral normal vectors $\eta_1, \hdots, \eta_n, \eta_b, \eta_t$. $D^\Lambda_1, \hdots, D_n^\Lambda$ are the vertical facets of $P_\Lambda$ ``coming from'' the facets of $P$, while $D_b^\Lambda$ and $D_t^\Lambda$ are respectively the bottom and top facets. Note that the vectors $\eta_1, \hdots, \eta_n$ are induced by the normal vectors $v_1, \hdots, v_n$.  More precisely, $\eta_i=(v_i,0)$ with $i=1, \hdots,n.$ It follows from \eqref{vi} together with the clutching construction that the vectors $\eta_i$ satisfy the following relations 
\begin{align*}
  \begin{array}{lllll}
      \eta_{n-1}  = -e_2  & \quad &  \eta_{3}  = \alpha_3 e_1 +\beta_3 e_2  & \quad &  \eta_b  = - e_1 - e_3   \\
      \eta_{n}  = -e_1    &  &  \hdots   &  &  \eta_t  = e_3   \\
      \eta_{1}  = e_2- 2 e_1    & &  \eta_{j}  =  \alpha_j e_1 +\beta_j e_2  &  & \\
      \eta_{2}  = 2 e_2- 3 e_1    & &   \hdots  & &
  \end{array}
\end{align*}
where now the vectors $e_1,e_2, e_3$ form the canonical basis of $\Z^3$. Clearly, it follows from the definition of  $\eta_i$, with $i=1, \hdots,n$, together with \eqref{relationvectorsI} that 
\begin{equation}\label{relationvectorsII}
 \eta_{i-1}+\eta_{i+1} = d_i \eta_i.
 \end{equation} 

\begin{exam}
Consider the second Hirzebruch surface, with a polytope with normal (outward) vectors $(0,-1), (-1,0), (-2,1), (1,0)$  where the facet normal to $(-1,0)$
 corresponds  to a curve of zero Chern number (in this example we have only one facet where the first Chern number vanishes). In this case the vectors  $\eta_i$ are the following:
 $$ \eta_1=(-2,1,0), \ \eta_2=(1,0,0), \ \eta_3 =(0,-1,0), \ \eta_4=(-1,0,0), \ \eta_b=(-1,0,-1),  \ \eta_t=(0,0,1).$$
 \end{exam}
 
The vertical facets of $P_\Lambda$ and the corresponding outward normals are represented in Figure \ref{verticalfacets}. Note that the polytope is closed, but in Figure \ref{verticalfacets}  we only draw the facets in which we are interested.
\begin{figure}[htbp]
          \psfrag{A}{$D_{n-1}^{\Lambda}$}
          \psfrag{B}{$\eta_{n-1}$}
          \psfrag{C}{\hspace{-0.2cm}$D_{n}^\Lambda$}
          \psfrag{D}{$\eta_{n}$}
          \psfrag{F}{$D_{1}^\Lambda$}
          \psfrag{G}{$\eta_{1}$}
          \psfrag{H}{\hspace{-0.1cm}$D_{2}^\Lambda$}
          \psfrag{J}{$\eta_{2}$}
           \resizebox{!}{7cm}{\includegraphics{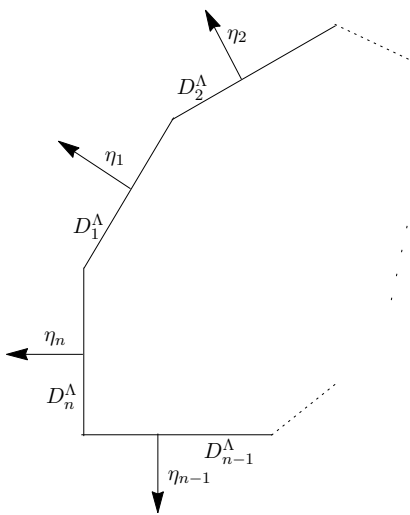}}
           \caption{Some vertical facets of the polytope $P_\Lambda$ and their outward normals.}
           \label{verticalfacets}
\end{figure}

The manifold $M_\Lambda$ is  6--dimensional, hence its fan $\Sigma$ lives in the lattice $\Z^3$. Then the 1--dimensional cones of the fan $\Sigma$  are generated by the vectors $\eta_i$ defined above. The set of primitive collections of the fan $\Sigma$ is given by the following set: 
\begin{equation*}
 \mathcal{P}= \{ \{\eta_1,\eta_3\}, \hdots, \{\eta_1,\eta_{n-1}\},  \{\eta_2,\eta_4\}, \hdots, \{\eta_2, \eta_n\}, 
  \{\eta_3,\eta_5\}, \hdots, \{\eta_3,\eta_n\},\{\eta_{n-2}, \eta_n\},\{ \eta_b,\eta_t \} \}.
\end{equation*}

From \eqref{cohomologyring} it follows that the cohomology ring of $M_\Lambda$ is given by the following isomorphism: 
$$H^*(M_\Lambda; \Q) \cong \Q[Z_1, \hdots, Z_n, Z_b, Z_t]/ \langle{\mathrm {Lin}}(\Sigma)+ {\mathrm {SR}}(\Sigma)\rangle$$
where $ {\mathrm {SR}}(\Sigma)$ is the Stanley--Riesner ideal of $\Sigma$ and ${\mathrm {Lin}}(\Sigma)$ is the ideal generated by the linear relations. The former is generated by the set of primitive collections: 
\begin{multline}\label{primitivecollections}
 Z_1Z_3, \hdots, Z_1Z_{n-1},  Z_2Z_4, \hdots, Z_2Z_n, Z_3Z_5, \hdots, Z_3Z_n,\hdots, \\
 Z_{n-3}Z_{n-1},Z_{n-3}Z_n,Z_{n-2}Z_n \ \ \mbox{and} \ \ Z_bZ_t,
\end{multline}
while the ideal ${\mathrm {Lin}}(\Sigma)$ is generated by the following three elements: 
\begin{align}
& Z_n  + 2Z_1+ 3Z_2- \alpha_3Z_3 - \hdots -\alpha_{n-2}Z_{n-2} + Z_b , \label{additive1}\\
& Z_{n-1} -Z_1-2Z_2 -\beta_3Z_3 - \hdots - \beta_{n-2}Z_{n-2} \mbox{, and}  \label{additive2}\\
& Z_t-Z_b.\label{additive3}
\end{align}
In view of the relations \eqref{additive1}--\eqref{additive3}, $Z_{n-1}$, $Z_n$ and $Z_t$ are linear combinations of the others, so that the set $\{Z_1,\ldots, Z_{n-2},Z_b\}$ is a basis of the degree 2 part of the cohomology ring. The degree 2 homology $H_2(M_\Lambda; \Z)$ can be identified with the group 
$R(\Sigma)\subset \Z^{n+2} $ given by
$$R(\Sigma) :=  \{(\gamma_1, \hdots, \gamma_n,\gamma_b, \gamma_t) \in \Z^{n+2} \, |\,  \gamma_1 \eta_1 + \hdots + \gamma_n \eta_n+\gamma_b\eta_b +\gamma_{t}\eta_{t}=0 \},$$
where we identify $\eta_b, \eta_t$ with $\eta_{n+1}, \eta_{n+2}$ respectively. If follows from the definition of the vectors $\eta_i$  that a basis for the degree 2 homology, $H_2(M_\Lambda; \Z),$ can be given by  the set $\{\lambda_1,\hdots, \lambda_{n-2}, \lambda_b\}$ which is dual to the basis of the degree 2 cohomology, that is, $Z_i (\lambda_j)= 1$ if $i=j$ and 0 otherwise.  More precisely, the generators are given by  
\begin{align*}
\lambda_1 &= \left(1, 0,\dots, 0,1,-2,0,0\right), \quad  \quad \lambda_2 = \left(0,1,0, \dots, 0,2,-3,0,0\right), \\
\lambda_j &= \left(0, \dots 0,1,0,\dots, 0,\beta_j, \alpha_j,0,0\right),  \quad j=3, \dots, n-2, \  \mbox{ and } \  \lambda_b  = \left(0,\dots, 0,-1,1,1\right), 
\end{align*}
where the entry 1 in $\lambda_j$ is located at the $j$--th entry.

From the description of the set of primitive collections, it is easy to get the set of maximal cones in $\Sigma$. Next we list some 3--dimensional cones (the ones that are going to be relevant for our computations):
\begin{align*}
  \begin{array}{lllllll}
\sigma_1 =\langle  \eta_{n-2}, \eta_{n-1}, \eta_b \rangle &  &  \sigma_4 =\langle  \eta_{1}, \eta_{2}, \eta_b \rangle  & & \sigma_7 =\langle  \eta_{n-1}, \eta_{n}, \eta_t \rangle & &  \sigma_{10}  =\langle  \eta_{2}, \eta_{3}, \eta_t \rangle  \\
\sigma_2 =\langle  \eta_{n-1}, \eta_{n}, \eta_b \rangle &  &  \sigma_5 =\langle  \eta_{2}, \eta_{3}, \eta_b \rangle  & & \sigma_8  =\langle  \eta_{1}, \eta_{n}, \eta_t \rangle   &  & \\
 \sigma_3 =\langle  \eta_{1}, \eta_{n}, \eta_b \rangle  &  & \sigma_6  =\langle  \eta_{n-2}, \eta_{n-1}, \eta_t \rangle  & &   \sigma_9 =\langle  \eta_{1}, \eta_{2}, \eta_t \rangle &  &
 \end{array}
\end{align*}

Consider now, for example, the invariant 2--sphere $V_{\sigma_2 \cap \sigma_3}$, connecting the fixed points corresponding to $\sigma_2$ and $\sigma_3$. Since 
$\sigma_2  =\langle  \eta_{n-1}, \eta_{n}, \eta_b \rangle$ and $\sigma_3  =\langle  \eta_{1}, \eta_{n}, \eta_b \rangle$, the homology class of $V_{\sigma_2 \cap \sigma_3}$ is Poincar\'e dual to $Z_nZ_b$. Hence the primitive relations yield
\begin{align*}
\langle & Z_1, V_{\sigma_2 \cap \sigma_3} \rangle   = Z_1Z_nZ_b =Z_1Z_2Z_b, \quad \langle   Z_b, V_{\sigma_2 \cap \sigma_3} \rangle  = 0, \\
\langle  & Z_2, V_{\sigma_2 \cap \sigma_3} \rangle   = 0, \quad \hdots   \quad  \langle   Z_{n-2}, V_{\sigma_2 \cap \sigma_3} \rangle  = 0. 
\end{align*}
Since $\{Z_1, \ldots, Z_{n-2},Z_b\}$ is dual to $\{\lambda_1,\hdots, \lambda_{n-2}, \lambda_b\}$, this implies that $V_{\sigma_2 \cap \sigma_3}= \lambda_1$. For another example, consider the homology class of $V_{\sigma_4\cap \sigma_5}$ which is Poincar\'e dual to $Z_2Z_b$.  Since $\eta_1 +\eta_3 = d_2 \eta_2$ (see \eqref{relationvectorsII}) it follows  that $2\alpha_3+ 3\beta_3=1$ and $\alpha_3 +2\beta_3=d_2$. 
Using \eqref{additive1} and \eqref{additive2} one obtains
\begin{align*}
\langle & Z_1, V_{\sigma_4 \cap \sigma_5} \rangle   = Z_1Z_2Z_b, \quad  \langle  Z_2, V_{\sigma_4 \cap \sigma_5} \rangle   = Z_2^2 Z_b= -d_2Z_1Z_2Z_b,   \quad \langle   Z_b, V_{\sigma_4 \cap \sigma_5} \rangle   = 0, \\
  \langle  & Z_3, V_{\sigma_4 \cap \sigma_5} \rangle   = Z_2Z_3 Z_b= Z_1Z_2Z_b,  \quad \langle   Z_4, V_{\sigma_2 \cap \sigma_3} \rangle   = 0, \ \hdots  \ \langle  Z_{n-2}, V_{\sigma_4 \cap \sigma_5} \rangle  = 0. 
\end{align*}
 
Therefore $V_{\sigma_4 \cap \sigma_5}= \lambda_1 -d_2\lambda_2 +\lambda_3$. Calculations of the homology classes of the other invariant spheres are similar.  Moreover, it is not hard to check that  the ones not identified in the diagram of Figure \ref{fig:Diagram}, all  include contributions of generators $\lambda_i$ distinct from $\lambda_1$, $\lambda_2$, and $\lambda_b$.

\begin{figure}[htbp]
          \psfrag{A}{$\sigma_6$}
          \psfrag{B}{$\sigma_1$}
          \psfrag{C}{$\sigma_7$}
          \psfrag{D}{$\sigma_2$}
          \psfrag{F}{$\sigma_8$}
          \psfrag{G}{$\sigma_3$}
          \psfrag{H}{$\sigma_9$}
          \psfrag{J}{$\sigma_4$}
           \psfrag{K}{$\sigma_{10}$}
          \psfrag{L}{$\sigma_5$}
          \psfrag{M}{$\lambda_b$}
          \psfrag{N}{$\lambda_b +\lambda_2 -2\lambda_1$}
          \psfrag{O}{$\lambda_1$}
          \psfrag{P}{$\lambda_2-2\lambda_1$}
          \psfrag{Q}{\hspace{-0.15cm}$\lambda_1-d_2\lambda_2+\lambda_3$}
           \resizebox{!}{8cm}{\includegraphics{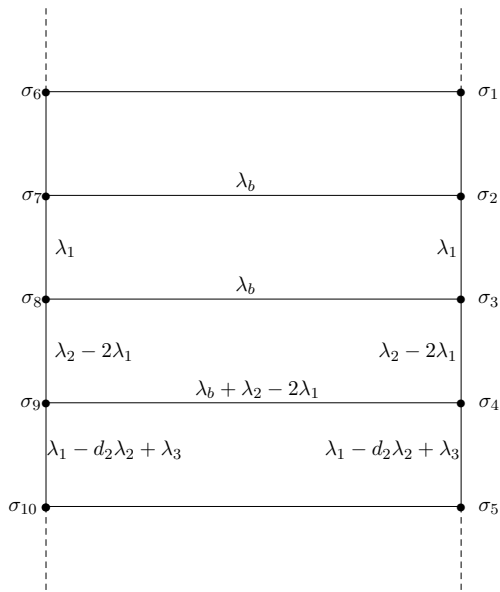}}
          \caption{Diagram representing some invariant 2--spheres of the toric manifold $M_\Lambda$ and their homology classes.}
          \label{fig:Diagram}
     \end{figure}

  Let $A_i \in H_*(M; \Z)$ with  $i=1, \hdots, n$ denote the homology class of the pre--image under the moment map $\Phi$ of the facet $D_i$. Since $M_\Lambda$ is the total space of a fibration with fiber $M$, these homology  classes can be identified with some invariant 2--spheres in $M_\Lambda$, $V_{\sigma_j \cap \sigma_k}$. More precisely, we have $A_n=\lambda_1$, $A_1=\lambda_2-2\lambda_1$. Let\footnote{The notation $A_\max$ is due to the fact that this is the homology class of a section of $M_\Lambda$ through points on the maximal fixed point component of the action (prior to the clutching construction).} $A_\max = \lambda_b=V_{\sigma_3 \cap \sigma_8}=V_{\sigma_2 \cap \sigma_7}$. Since 
$$c_1 (M_\Lambda)= Z_1+ \hdots + Z_n + Z_b+Z_t,$$
where $c_1(M_\Lambda)$ is the first Chern class of the tangent bundle of $M_\Lambda$, it follows easily that 
$ \langle c_1(M_\Lambda), \lambda_1\rangle$ $ = \langle c_1(M_\Lambda), \lambda_2\rangle=0$ and $\langle c_1(M_\Lambda), \lambda_b\rangle =1$. Therefore we have $ \langle c_1(M_\Lambda),A_n\rangle = \langle c_1(M_\Lambda), A_1\rangle=0$ and $\langle c_1(M_\Lambda), A_\max \rangle =1$.

   As we shall see in Section \ref{sec:Seidelmorphism}, in order to compute certain Gromov--Witten invariants we will need to know some more information about the ring structure of the cohomology of $M_\Lambda$, namely certain relations satisfied by the coefficients of the cup-product matrix $G=(g_{\nu\mu})_{\nu\mu}$, with $g_{\nu\mu}=\int_{M_\Lambda} e_\nu \cup e_\mu$ (for some basis $(e_{\nu})_{\nu}$ of the cohomology ring), and its inverse, $G^{-1}=(g^{\nu\mu})_{\nu\mu}$. 

By noticing that the cohomology of $M_\Lambda$ is non-zero only in even degrees, that the degree $0$ and degree $6$ groups are 1--dimensional (respectively generated by $\id$ and the fundamental class of $M_\Lambda$, $[M_\Lambda]$), and that $g_{\nu\mu}\neq 0$ only if the degrees of $e_\nu$ and $e_\mu$ sum up to 6, it is easy to see that, \emph{as soon as $(e_{\nu})_{\nu}$ is ordered so that the degree increases}, $G$ decomposes as
$$
\left( \vcenter{
\xymatrix@=1pt {    
0 & 0 \ar@{..}[rrrr] &&&& 0 & 1\\
0 \ar@{..}[dddd] & \ar@{}[rd]|*{\mathbf{0}} && \ar@{}[rrd]|*{\mathbf{B}} && & 0 \ar@{..}[dddd] \\
& &&&&& \\
 & \ar@{}[rdd]|*{\mathbf{B}^T} &&  \ar@{}[rrdd]|*{\mathbf{0}} &&& \\
&  &&&&& \\
0 &  &&&&& 0\\
1 & 0 \ar@{..}[rrrr] &&&& 0 & 0
}  
} \right)
$$
with $\mathbf{B}$ the matrix composed of the $(g_{\nu\mu})_{|e_\nu|=2,|e_\mu|=4}$.

Now, let us specify the basis.  Recall that the set $\{Z_1,\ldots, Z_{n-2}, Z_b\}$ is a basis of the degree 2 part of the cohomology. Notice that by  \eqref{primitivecollections} and \eqref{additive3} we have $Z_b^2=0$. Then the degree 4 part of the cohomology consists of all products $Z_iZ_j$ and $Z_iZ_b$ with $1\leq i\leq j\leq n-2$. In view of the relations coming from $SR(\Sigma)$, $Z_1Z_j=0$ for $3\leq j\leq Z_{n-2}$. Then, multiplying  \eqref{additive2} by $Z_1$ immediately leads to the relations $Z_1^2+2Z_1Z_2=0$. Hence, for $i=1$, only $Z_1Z_2$ and $Z_1Z_b$ need to be considered. Recall that we have  $2\alpha_3+ 3\beta_3=1$ and $\alpha_3 +2\beta_3=d_2$ as seen above. Then multiplying \eqref{additive1} and \eqref{additive2} by $Z_2$ gives $Z_2Z_3=Z_1Z_2+2Z_2Z_b$ and $Z_2^2=-d_2Z_1Z_2+(1-2d_2)Z_2Z_b$. Thus for $i=2$ we only have to consider  $Z_2Z_b$. Hence, we can explicitly write some part of $\mathbf{B}$:
\begin{align} \label{eq:table-part-of-B}  
\begin{array}{c|ccccc}
& Z_1Z_2 & Z_1Z_b &  Z_2Z_b & \dots \dots \\ \hline
Z_1 & -2 & -2 &  1 & 0 \mbox{ --- } \,0\\ 
Z_2 & 1 & 1 &  -d_2 & \\ 
Z_3 & 0 &  0&  1 & \\ 
Z_4 & 0 &  0&  0 & \\ 
| & 0 & 0 &  0 & \\ 
Z_{n-2}& 0 & 0 &  0 & \\ 
Z_b  & 1 & 0 & 0  &    
\end{array}
\end{align}
Indeed, the vanishing terms come from the relations given by the ideal ${\mathrm {SR}}(\Sigma)$, while the non-zero terms can be computed using the definition. For example, since $Z_1Z_b$ is Poincar\'e dual to $V_{\sigma_3 \cap \sigma_4}=\lambda_2 -2\lambda_1$ (see Figure \ref{fig:Diagram}), it follows that $Z_1Z_2Z_b$ is given by 
$$ \int_{M_\Lambda}Z_1Z_2Z_b = Z_2(\lambda_2 -2\lambda_1)=1.$$
Using this computation together with  the relations  given by the ideals ${\mathrm {SR}}(\Sigma)$ and ${\mathrm {Lin}}(\Sigma)$ we can  obtain the other non-vanishing terms.

In order to simplify the notation, we will denote $g_{\nu\mu}$ and $g^{\nu\mu}$ by using the indices of the corresponding elements $e_\nu$ and $e_\mu$. For example, for $e_\nu=Z_1$ and $e_\mu=Z_2Z_b$,  $g_{\nu\mu}$ will be denoted $g_{1,2b}$ and $g^{\nu\mu}$ will be denoted $g^{1,2b}$. Of course $G$ and $G^{-1}$ are symmetric so that $g_{\nu\mu}=g_{\mu\nu}$ and $g^{\nu\mu}=g^{\mu\nu}$. Moreover, note that by commutativity of the cup-product, permuting the indices does not change the value $g_{1,2b}=g_{b,12}=g_{2,1b}$. However, this fails for the coefficients of $G^{-1}$.

Since $G^{-1}G=\id$, we get relations between the coefficients of $G$ and $G^{-1}$ by multiplying particular lines of $G^{-1}$ with columns of $G$. For example,
\begin{align*}
\sum_\nu g^{1b,\nu}g_{\nu,1b}=1 &\Longleftrightarrow   -2g^{1b,1}+ g^{1b,2}=1 \\
\sum_\nu g^{1b,\nu}g_{\nu,12}=0 &\Longleftrightarrow   -2g^{1b,1}+g^{1b,2}+g^{1b,b}=0 \\
\sum_\nu g^{1b,\nu}g_{\nu,2b}=0 &\Longleftrightarrow   g^{1b,1}-d_2g^{1b,2}+g^{1b,3}=0 
\end{align*}
which lead to the fact that $g^{1b,b}=-1$. By using the lines of $G^{-1}$ corresponding to $Z_1Z_2$, $Z_2Z_b$, and again the columns of $G$ corresponding to $Z_1Z_2$, $Z_1Z_b$, and $Z_2Z_b$, we get some more relations between the coefficients of the matrix $G^{-1}$.  We gather in the next lemma the result of these computations. 
\begin{lemm}[Some coefficients of $G^{-1}$]
\label{lem:relationscoeffmatrix}
\begin{align*} 
& \left\{\! \begin{array}{l}  g^{1b,b}=-1 \\ g^{1b,2}-2g^{1b,1}=1 \\   g^{1b,1}-d_2g^{1b,2}+g^{1b,3}=0 \end{array}  \!\right., 
\left\{\! \begin{array}{l}  g^{12,b}=1 \\ g^{12,2}=2g^{12,1} \\   g^{12,1}-d_2g^{12,2}+g^{12,3}=0 \end{array}  \!\right.,  \; 
& \left\{\! \begin{array}{l}  g^{2b,b}=0 \\ g^{2b,2}=2g^{2b,1} \\   g^{2b,1}-d_2g^{2b,2}+g^{2b,3}=1  \end{array} \! \right..
\end{align*}
\end{lemm}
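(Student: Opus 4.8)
The plan is to extract the stated coefficients of $G^{-1}$ purely from the defining identity $G^{-1}G=\id$, using the partial knowledge of $\mathbf{B}$ recorded in \eqref{eq:table-part-of-B} together with the relation $2\alpha_3+3\beta_3=1$, $\alpha_3+2\beta_3=d_2$ which pins down the entries in the $Z_2Z_b$ column. The key observation that makes this tractable is the block structure of $G$ displayed above: since $g_{\nu\mu}\neq 0$ only when $|e_\nu|+|e_\mu|=6$, the inverse respects the same grading, so the degree-4 rows of $G^{-1}$ (indexed by $Z_1Z_2$, $Z_1Z_b$, $Z_2Z_b$, etc.) pair only against the degree-2 columns of $G$ (indexed by $Z_1,\ldots,Z_{n-2},Z_b$), and vice versa. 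Thus each scalar equation $\sum_\nu g^{I,\nu}g_{\nu,J}=\delta_{IJ}$ for $I,J$ degree-4 multi-indices is really a finite sum over the degree-2 basis, and by \eqref{eq:table-part-of-B} only the rows $Z_1$, $Z_2$, $Z_3$, $Z_b$ of $\mathbf{B}$ have nonzero entries in the columns $Z_1Z_2$, $Z_1Z_b$, $Z_2Z_b$ that we care about.

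First I would write out, for each target row $I\in\{Z_1Z_b,\,Z_1Z_2,\,Z_2Z_b\}$ of $G^{-1}$, the three equations obtained by dotting that row with the columns $J\in\{Z_1Z_b,\,Z_1Z_2,\,Z_2Z_b\}$ of $G$. Reading the coefficients straight off \eqref{eq:table-part-of-B}, the column $Z_1Z_b$ of $G$ has entries $(-2,1,0,\ldots,0)$ in rows $(Z_1,Z_2,Z_3,\ldots)$ with $g_{b,1b}=0$; the column $Z_1Z_2$ has entries $(-2,1,0,\ldots,0,1)$ with $g_{b,12}=1$; and the column $Z_2Z_b$ has entries $(1,-d_2,1,0,\ldots,0)$ with $g_{b,2b}=0$. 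This is exactly the data already used in the excerpt to produce the three sample equations for the $Z_1Z_b$ row, so the remaining work is to repeat that computation mechanically for the $Z_1Z_2$ and $Z_2Z_b$ rows. Each row yields a small linear system in the three unknowns $g^{I,1}$, $g^{I,2}$, $g^{I,3}$ (the coefficient $g^{I,b}$ being read off directly from the $Z_1Z_2$ column equation, since that is the only column whose $Z_b$-entry is nonzero), and solving gives precisely the displayed triples, in particular $g^{1b,b}=-1$, $g^{12,b}=1$, $g^{2b,b}=0$.

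Concretely, for the $I=Z_1Z_2$ row: pairing with $J=Z_1Z_2$ gives $-2g^{12,1}+g^{12,2}+g^{12,b}=0$; pairing with $J=Z_1Z_b$ gives $-2g^{12,1}+g^{12,2}=0$, whence $g^{12,2}=2g^{12,1}$ and then $g^{12,b}=1$; pairing with $J=Z_2Z_b$ gives the orthogonality relation $g^{12,1}-d_2g^{12,2}+g^{12,3}=0$ (this $J$ corresponds to the $Z_1Z_2$ row of $G^{-1}G$, which must vanish). For $I=Z_2Z_b$ the analogous three pairings give $-2g^{2b,1}+g^{2b,2}+g^{2b,b}=0$ and $-2g^{2b,1}+g^{2b,2}=0$ forcing $g^{2b,b}=0$ and $g^{2b,2}=2g^{2b,1}$, while pairing against the $Z_2Z_b$ column gives $g^{2b,1}-d_2g^{2b,2}+g^{2b,3}=1$ (now the right-hand side is $1$ because $I=J$). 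Assembling these with the already-derived $I=Z_1Z_b$ system reproduces the lemma verbatim.

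The only genuine subtlety — and the place I would be most careful — is bookkeeping the block/grading structure so as to be certain that no degree-2 basis element outside $\{Z_1,Z_2,Z_3,Z_b\}$ contributes to these particular dot products, and to correctly distinguish the $\delta_{IJ}=1$ cases (diagonal, e.g. $I=J=Z_2Z_b$) from the $\delta_{IJ}=0$ cases. There is no hard analytic content here: once \eqref{eq:table-part-of-B} and the relation $2\alpha_3+3\beta_3=1$, $\alpha_3+2\beta_3=d_2$ are in hand, the lemma is a finite, self-contained exercise in inverting the relevant block of a symmetric intersection matrix, and the proof is essentially the chain of equivalences already exhibited for $g^{1b,b}=-1$, carried out three times.
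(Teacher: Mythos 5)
Your proposal is correct and follows exactly the paper's own argument: pair the rows of $G^{-1}$ indexed by $Z_1Z_b$, $Z_1Z_2$, $Z_2Z_b$ against the columns of $G$ read off from \eqref{eq:table-part-of-B}, and use $G^{-1}G=\id$ to extract the nine relations. The only slip is in your displayed equation for the pairing $I=J=Z_1Z_2$, whose right-hand side should be $1$ (diagonal case) rather than $0$ — with that fixed, your deduction $g^{12,b}=1$ becomes consistent, and everything else matches the lemma verbatim.
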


\subsection{Gromov--Witten invariants}
\label{mainexample}

We now compute some Gromov--Witten invariants of $M_\Lambda$ using Spielberg's machinery from \cite{Spielberg00}. In particular we will use a simplified version of its main theorem which we give in Section \ref{SpielbergFormula}. 

We  need to know the weights of the torus action at the different charts. By general theory each 3--dimensional cone  gives a chart of the toric manifold near a fixed point. 
For  our calculations  it will be convenient to know the following weights, which we compute using Lemma \ref{weights}. 
\begin{center}
\begin{tabular}{|c|c|}\hline
  \quad        \quad $\sigma_2  =\langle  \eta_{n-1}, \eta_{n}, \eta_b \rangle$  \quad  \quad & \quad  \quad$\sigma_7 =\langle  \eta_{n-1}, \eta_{n}, \eta_t \rangle$ \quad  \quad
 \\\hline
 $\omega_{\sigma_3}^{\sigma_2}=   a_1$ & $\omega_{\sigma_8}^{\sigma_7}=   a_1$ \\
   $\omega_{\sigma_1}^{\sigma_2}=   a_2 + \omega_t$ & $\omega_{\sigma_6}^{\sigma_7}=   a_2+ \omega_b$ \\
  $\omega_{\sigma_7}^{\sigma_2}=   \omega_b- \omega_t$ & $\omega_{\sigma_2}^{\sigma_7}=   \omega_t- \omega_b$       
  \\ \hline \hline
   \quad        \quad $\sigma_3  =\langle  \eta_{1}, \eta_{n}, \eta_b \rangle$  \quad  \quad & \quad  \quad$\sigma_8  =\langle  \eta_{1}, \eta_{n}, \eta_t \rangle$ \quad  \quad
            
\\\hline

$\omega_{\sigma_2}^{\sigma_3}=   -a_1$ & $\omega_{\sigma_7}^{\sigma_8}=   -a_1$ \\
   $\omega_{\sigma_4}^{\sigma_3}=   2a_1+a_2 + \omega_t$ & $\omega_{\sigma_9}^{\sigma_8}=   2a_1+a_2+ \omega_b$ \\
  $\omega_{\sigma_8}^{\sigma_3}=   \omega_b- \omega_t$ & $\omega_{\sigma_3}^{\sigma_8}=   \omega_t- \omega_b$       
  \\ \hline \hline
 \quad        \quad $\sigma_4  =\langle  \eta_{1}, \eta_{2}, \eta_b \rangle$  \quad  \quad & \quad  \quad$\sigma_9 =\langle  \eta_{1}, \eta_{2}, \eta_t \rangle$ \quad  \quad
\\\hline

$\omega_{\sigma_3}^{\sigma_4}=   -2a_1-a_2 - \omega_t$ & $\omega_{\sigma_8}^{\sigma_9}=   -2a_1-a_2 -\omega_b$ \\
   $\omega_{\sigma_5}^{\sigma_4}=   3a_1+2a_2 + 2\omega_t$ & $\omega_{\sigma_{10}}^{\sigma_9}=   3a_1+2a_2+ 2\omega_b$ \\
  $\omega_{\sigma_9}^{\sigma_4}=   \omega_b- \omega_t$ & $\omega_{\sigma_4}^{\sigma_9}=   \omega_t- \omega_b$
 \\\hline
 \end{tabular}
\end{center}
where the $a_1,a_2 \in \Z$ are linear functions on the weights $\omega_1, \hdots, \omega_n$. 
Now we are ready to begin calculating Gromov--Witten invariants of this manifold. In the next lemma we will compute some invariants which will be needed later in the proof of Theorem \ref{GWinvariants}. 
\begin{lemm}[Gromov--Witten invariants]\label{lemm:GW_Z1Zb-ZnZb}
\begin{gather*}
\GW_{A_\max+A_n,1}^{M_\Lambda}(Z_iZ_j)=\GW_{A_\max+A_n+A_1,1}^{M_\Lambda}(Z_iZ_j)= \left\{ \begin{array}{cl}
1 & \mbox{if} \quad  i=1, j=b \\
0 & \mbox{if} \quad  i=1, j=2 \\
0 & \mbox{if} \quad i=2, j=b 
 \end{array} \right.
\ \ \mbox{and } \\
\GW_{A_\max+A_1,1}^{M_\Lambda}(Z_iZ_j)= \left\{ \begin{array}{cl}
2 & \mbox{if}  \quad i=1, j=b \\
2 & \mbox{if} \quad i=1, j=2 \\
-1 & \mbox{if} \quad i=2, j=b 
 \end{array} \right.
\end{gather*}
\end{lemm}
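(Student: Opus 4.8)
The plan is to compute each of the six one--point Gromov--Witten invariants $\GW^{M_\Lambda}_{A,1}(Z_iZ_j)$ directly from Spielberg's formula, Theorem \ref{mainSpielberg}, applied to the 6--dimensional toric manifold $M_\Lambda$ whose fan, weights, and relevant invariant spheres have all been set up in Section \ref{sec:toric-topol-data}. The degree--two cohomology insertions $Z_iZ_j$ fix the exponents $l_k$ appearing in the factor $\prod_k (\omega_k^{\sigma(1)})^{l_k}$ of $S_\Gamma$, and the homology classes in question are the small combinations $A_\max+A_n=\lambda_b+\lambda_1$, $A_\max+A_n+A_1=\lambda_b+\lambda_2-\lambda_1$, and $A_\max+A_1=\lambda_b+\lambda_2-2\lambda_1$, each of which has $\langle c_1(M_\Lambda),\cdot\rangle=1$. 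The first thing I would do is enumerate, for each of these classes, the decorated graphs $\Gamma$ that can represent a stable map in that class: because $c_1=1$ and the class is a small sum of the $\lambda$'s identified in Figure \ref{fig:Diagram}, only very few graphs survive, and they correspond to chains of the invariant spheres $V_{\sigma_j\cap\sigma_k}$ read off from that diagram.

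Next, for each surviving graph I would assemble $T_\Gamma$ and $S_\Gamma$ using the weight tables computed in Section \ref{mainexample} (the weights attached to $\sigma_2,\sigma_3,\sigma_4,\sigma_7,\sigma_8,\sigma_9$ in terms of $a_1,a_2,\omega_b,\omega_t$). The valence/degree bookkeeping $\Vertices_{t,s}$ determines the exponents $t-1$, $t-3$, and $s$ in $T_\Gamma$ and $S_\Gamma$, and the single marked point carrying $Z_iZ_j$ must be placed at the appropriate vertex $\sigma(1)$, which is what selects the nonzero terms via $\omega_k^{\sigma(1)}$. The edge factors in $T_\Gamma$ involve the products $\prod_{\alpha}(\omega^{\sigma_1}_\alpha - \tfrac{i}{m}\omega^{\sigma_1}_{\sigma_2})$ with $m=3$; these I would evaluate with $\lambda_e^\alpha$ read off from the homology data \eqref{homology}. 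After forming the rational function in $a_1,a_2,\omega_b,\omega_t$ for each graph, I would sum over $\Gamma$ (dividing by $|A_\Gamma|$) and check that all dependence on the weights cancels, leaving the stated integers $0$, $1$, $2$, or $-1$.

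The main obstacle I expect is twofold. First, the combinatorial step of correctly listing \emph{all} contributing graphs for each class is where errors creep in: one must be sure that no multiply--covered or reducible configuration in the given homology class has been missed, which requires knowing precisely which chains of invariant spheres (among those in Figure \ref{fig:Diagram} and their neighbours) add up to $\lambda_b+\lambda_1$, $\lambda_b+\lambda_2-\lambda_1$, and $\lambda_b+\lambda_2-2\lambda_1$ and no others. Second, the localization expressions are genuinely unwieldy rational functions, so the real work is verifying that the weight--dependent pieces cancel after summation; I would organize this by grouping the contributions of graphs sharing the same vertex set and exploiting the relations \eqref{relationvectorsII} among the $\eta_i$ (equivalently $2\alpha_3+3\beta_3=1$, $\alpha_3+2\beta_3=d_2$) to simplify. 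The asymmetry between the three classes — that $A_\max+A_n$ and $A_\max+A_n+A_1$ give the same answers while $A_\max+A_1$ differs, with the telltale values $2$ and $-1$ — suggests that the marked point's position relative to the facets $D_n$ (where $c_1$ vanishes) versus $D_1$ is what drives the sign and multiplicity, so I would use that expectation as a consistency check on the graph enumeration rather than trusting the algebra blindly.
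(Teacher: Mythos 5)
Your proposal follows essentially the same route as the paper: the authors also apply Spielberg's localization formula (Theorem \ref{mainSpielberg}) to $M_\Lambda$, enumerate the decorated graphs forced by the condition that the marked point lie in the cones selected by the insertion $Z_iZ_j$ (e.g.\ $\sigma_3$ or $\sigma_4$ for $Z_1Z_b$), sum the resulting rational functions in $a_1,a_2,\omega_b,\omega_t$, and verify that the weights cancel to give the stated integers. The paper carries this out explicitly only for $\GW_{A_\max+A_n+A_1,1}^{M_\Lambda}(Z_1Z_b)$ and $(Z_1Z_2)$ and leaves the remaining cases to the reader, exactly as you propose to do them.
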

\begin{proof}
We first compute  the invariant 
$\GW_{A_\max+A_n+A_1,1}^{M_\Lambda}(Z_1Z_b)$.
We use the formula from Section \ref{SpielbergFormula}. Since the marked point has to lie in the cone $\sigma_3$ or $\sigma_4$, we need to consider the graphs which contain one of these cones and which represent the class $A_\max+A_n+A_1$.  It follows that we should consider the following graphs: 
\begin{figure}[htbp]
          \psfrag{A}{\tiny$\sigma_4$}
          \psfrag{B}{\tiny$\sigma_3$}
          \psfrag{C}{\tiny$\sigma_2$}
          \psfrag{D}{\tiny$\sigma_7$}
          \psfrag{G}{\tiny$\sigma_8$}
          \psfrag{F}{\tiny$\sigma_9$}
          \psfrag{M}{\tiny$(2)$}
          \psfrag{N}{\tiny$(1)$}
          \psfrag{O}{\tiny$(3)$}
          \psfrag{P}{\tiny$(4)$}
           \psfrag{Q}{\tiny$(5)$}
           \resizebox{!}{3.5cm}{\includegraphics{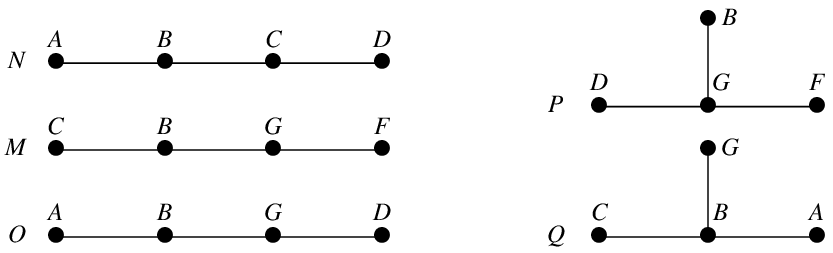}}
          \label{Graphs}
     \end{figure}

 Therefore Theorem \ref{mainSpielberg} gives the following computation
 \begin{align*} 
  &\GW_{ A_\max+A_n+A_1,1}^{M_\Lambda} (Z_1Z_b)  = (1)+\hdots+(5)  = - \frac{(a_1+a_2+\omega_t)(a_2+\omega_t)}{a_1(a_1+\omega_b-\omega_t)} \\
  &  \qquad\qquad\;\;\;\; -  \frac{(a_1+a_2+\omega_t)(a_1+a_2+\omega_b)}{(\omega_b-\omega_t)^2} - \frac{(a_1+2(\omega_b-\omega_t))(a_1+a_2+\omega_t)(a_1+a_2+\omega_b)}{(\omega_b-\omega_t)^2(a_1+\omega_b-\omega_t)} \\  
& \qquad\qquad\;\;\;\; +  \frac{(a_1+a_2+\omega_b)^2}{(\omega_b-\omega_t)^2}  +  \frac{(a_1+a_2+\omega_t)^2(a_1+\omega_b-\omega_t)}{(\omega_b-\omega_t)^2}= 1 \,.
\end{align*}
We can compute the invariant $$ \GW_{ A_\max+A_n+A_1,1}^{M_\Lambda} (Z_1Z_2)$$ in a similar way. In this case the marked point lies in the cone $\sigma_4$ or $\sigma_9$ so we need to consider the same graphs as in the computation above plus the following graph: 
\begin{figure}[htbp]
          \psfrag{A}{\small$\sigma_2$}
          \psfrag{B}{\small$\sigma_7$}
          \psfrag{C}{\small$\sigma_8$}
          \psfrag{D}{\small$\sigma_9$}
          \psfrag{N}{$(6)$}
           \resizebox{!}{.7cm}{\includegraphics{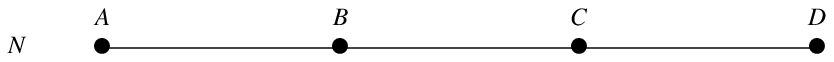}}
          \label{Graph}
     \end{figure}
     
The formula now gives for $ \GW_{ A_\max+A_n+A_1,1}^{M_\Lambda} (Z_1Z_2)   = (1)+ \hdots +(6)$:
\begin{align*} 
  &\GW_{ A_\max+A_n+A_1,1}^{M_\Lambda} (Z_nZ_b)  = \frac{(3a_1+2a_2+2\omega_t)(a_2+\omega_t)(a_1 +a_2+\omega_t)}{a_1(\omega_b-\omega_t)(a_1+\omega_b-\omega_t)} \\
 & \quad + \frac{(3a_1+2a_2+2\omega_b)(a_1+a_2+\omega_t)(a_1+a_2+\omega_b)}{(\omega_b-\omega_t)^2(a_1+\omega_t-\omega_b)} 
 \end{align*}
  \begin{align*} 
& \quad + \frac{(3a_1+2a_2+2\omega_t)(a_1+a_2+\omega_t)(a_1+a_2+\omega_b)}{(\omega_b-\omega_t)^2(a_1+\omega_b-\omega_t)}  -\frac{(3a_1+2a_2+2\omega_b)(a_1+a_2+\omega_b)^2}{a_1(\omega_b-\omega_t)^2} \\ 
& \quad - \frac{(3a_1+2a_2+2\omega_t)(a_1+a_2+\omega_t)^2}{a_1(\omega_b-\omega_t)^2} +\frac{(3a_1+2a_2+2\omega_b)(a_1+a_2+\omega_b)(a_2+\omega_b)}{a_1(\omega_t-\omega_b)(a_1+\omega_t-\omega_b)} =0.
\end{align*}
The remaining invariants can be computed  using the same formula, therefore we leave their computation for the interested reader. 
\end{proof}

%
%

\section{Seidel morphism in the NEF case}
\label{sec:seidel-morphism-nef}

In this section we explain how to compute the Seidel element associated to a Hamiltonian circle action fixing a facet of a toric 4--dimensional NEF symplectic manifold.

\subsection{The Seidel morphism}\label{sec:Seidelmorphism}

Recall from Section \ref{sec:TotalSpaceToric} that, starting from any closed symplectic manifold $(M,\omega)$ and a loop of Hamiltonian diffeomorphisms $\Lambda \subset \ham(M,\omega)$, one can construct a Hamiltonian fibration $\pi \co (M_\Lambda,\omega_\Lambda) \rightarrow (S^2,\omega_0)$ with fiber $(M,\omega)$, where $\omega_\Lambda=\Omega+\kappa \cdot \pi^*(\omega_0)$ for some big enough $\kappa$. Then, following \cite{Seidel97}, one can define Seidel's morphism, under some appropriate semi-positivity assumption on $(M,\omega)$, by counting pseudo-holomorphic section classes in $H^S_2(M_\Lambda;\bb Z)$, with respect to some arbitrary choice of such a section. This choice was made canonical in \cite{LMcDP99}.

In view of our goal, we now focus on the following specific case:
\begin{enumerate}[(i)]
\item The manifold $M$ admits an almost complex structure $J$ so that $(M,J)$ is NEF (that is, there are no $J$--pseudo-holomorphic spheres with $\langle c_1(M),B \rangle < 0$).
\item The symplectic manifold $(M,\omega)$ is a toric 4--dimensional manifold, whose associated Delzant polytope has $n \geq 4$ facets.
\item $\Lambda$ is a circle action, with moment map $\Phi_\Lambda$, whose maximal fixed point component corresponds to a divisor, denoted by $F_\max$.
\end{enumerate}
\begin{nota}
  Since the first Chern class of $M$ (and of $M$ only) is extensively used in what follows, we will denote $c_1(M)$ by $c_1$ and $\langle c_1(M),B \rangle$ by $c_1(B)$.
\end{nota}

We now extract from \cite{McDuffTolman06} the results which will be used in this section. Notice that in our specific setting, $F_\max$ is semifree and has dimension 2. We denote by $\Phi_\max=\Phi_\Lambda(F_\max)$ the maximal value of the moment map. Concerning the choice of the section mentioned above, recall that in the toric case it is convenient to choose $\sigma_\max = \{x\} \times D_1 \cup_\Lambda \{x\} \times D_2$ (see the description of the clutching construction, Section \ref{sec:TotalSpaceToric}) for any fixed point of the $S^1$--action $x$ lying in $F_\max$. If we let  $A_\max= [\sigma_\max] \in H_2^S(M;\bb Z)$ then all the contributions to the Seidel morphism come from the section classes $A_\max + B$ with $B\in H_2^S(M;\bb Z)$ and are determined by counting Gromov--Witten invariants in the classes $A_\max + B$, see e.g \cite[Definition 2.4]{McDuffTolman06}. Lastly, by  \cite[Lemma 2.2]{McDuffTolman06} the sum of the weights which appear in the formula giving the Seidel morphism, as part of the exponent of the $q$ variable, is $m_\max=-1$.

\begin{theo}[Theorem 1.10 and Lemma 3.10 of \cite{McDuffTolman06}]\label{theo:MainMcDT}
Under the assumptions \emph{(i)--(iii)} above, the Seidel element associated to the circle action $\Lambda$ is 
\begin{align*}
  S(\Lambda) = [F_\max] \otimes q t^{\Phi_\max} + \sum_{B \in H_2^S\!(M;\bb Z)^{>0}} a_B \otimes q^{1-c_1(B)} t^{\Phi_\max-\omega(B)} 
\end{align*}
where $H_2^S(M;\bb Z)^{>0}$ consists of the spherical classes of symplectic area $\omega(B)>0$ and $a_B\in H_*(M;\bb Z)$ is the contribution of the section class $A_\max+B$ defined by requiring that $ a_B \cdot_M c = \GW^{M_\Lambda}_{A_\max + B , 1}(c)$ for all homology classes $c\in H_*(M ;\bb Z)$.
Moreover,
\begin{enumerate}[(i)]
\item If $a_B\neq 0$ either $c_1(B)=0$ and $a_B \in H_2(M;\bb Z)$ or $c_1(B)=1$ and $a_B\in H_4(M;\bb Z)$. 
\item If $a_B\neq 0$ then $B$ intersects $F_\max$.
\item If $c_1(B')\geq 1$ for all $J$--holomorphic spheres $B'$ which intersect $F_\max$, then all the lower order terms vanish.
\item If $c_1(B')\geq 1$ for all $J$--holomorphic spheres $B'$ which intersect $F_\max$ but are not included in $F_\max$, then $a_B\neq 0\Rightarrow c_1(B)=0$.
\end{enumerate}
\end{theo}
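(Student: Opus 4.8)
The plan is to reconstruct the statement as a specialization of the general structure of Seidel's morphism from \cite{Seidel97,LMcDP99}, refined by the localization analysis of McDuff and Tolman. First I would recall that $\m S(\Lambda)$ is assembled from the one--point invariants $\GW^{M_\Lambda}_{A,1}$ counting section classes $A$ of $\pi\co M_\Lambda \to S^2$, and that the set of section classes is a torsor over $H_2^S(M;\bb Z)$. Fixing the canonical section $A_\max$ attached to a fixed point $x \in F_\max$ trivializes this torsor via $A \mapsto B := A - A_\max$, which produces the displayed sum over $B \in H_2^S(M;\bb Z)$. The exponent of $q$ in the leading term is fixed by the weight sum $m_\max = -1$ recalled just before the statement, and the leading coefficient is $[F_\max]$ because the only section in class $A_\max$ is $\sigma_\max$ itself, which is $J$--holomorphic, rigid, and sweeps out $F_\max$; thus $\GW^{M_\Lambda}_{A_\max,1}(c) = [F_\max]\cdot_M c$. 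The defining relation $a_B \cdot_M c = \GW^{M_\Lambda}_{A_\max+B,1}(c)$ is then just the general formula.

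Next I would prove the grading statement (i) formally. The class $\m S(\Lambda)$ is a homogeneous unit of $QH_*(M;\Pi)$ of the same degree as its leading term, namely $\deg [F_\max] + 2 = 4 = \dim M$. Matching degrees term by term forces $\deg(a_B) = 2 + 2c_1(B)$. A contributing section class $A_\max + B$ admits a $J$--holomorphic representative, so $B$ is carried by $J$--holomorphic fibre bubbles and the NEF hypothesis gives $c_1(B) \geq 0$; since $a_B \in H_*(M)$ satisfies $0 \leq \deg a_B \leq \dim M = 4$, this forces $c_1(B) \in \{0,1\}$, with $a_B \in H_2(M;\bb Z)$ and $a_B \in H_4(M;\bb Z)$ respectively. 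This is exactly (i).

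The geometric core is (ii), which I expect to be the main obstacle. Here I would exploit the fibrewise circle action, which extends to a Hamiltonian circle inside the torus $T_\Lambda$ acting on $M_\Lambda$ (Proposition \ref{prop:TotalSpace-Toric}), and apply localization to $\GW^{M_\Lambda}_{A_\max+B,1}$. The fixed loci lie over the two poles of $S^2$ and are copies of the fixed components of $\Lambda$ on $M$; a fixed stable map splits into a section component together with fibre bubbles, and since $F_\max$ is the maximum of the moment map $\Phi_\Lambda$, the relevant section component runs through $F_\max$ and the bubbles carrying $B$ are attached there, as gradient spheres issuing from the maximum. Hence any $B$ with $a_B \neq 0$ meets $F_\max$. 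The delicate part, where all the real work sits, is to control the possible degenerations and to verify that configurations whose bubbles avoid $F_\max$ fail the virtual dimension constraint and so contribute nothing; this is the heart of McDuff and Tolman's argument.

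Finally I would derive (iii) and (iv) from (i) and (ii). For (iv): suppose $a_B \neq 0$ with $c_1(B) = 1$. A $J$--holomorphic representative of $B$ then has exactly one component of first Chern number $1$ (the others vanishing by NEF), and by (ii) the configuration meets $F_\max$; inspecting the evaluation/dimension constraint one checks that this $c_1 = 1$ component must itself be a sphere meeting but not contained in $F_\max$, contradicting the hypothesis, so $c_1(B) = 0$. For (iii), the stronger hypothesis in addition excludes $c_1 = 0$ spheres through $F_\max$: by (ii) a $c_1(B) = 0$ contribution would require such a sphere, so it cannot occur, and together with (iv) this makes every lower order term vanish, leaving $\m S(\Lambda) = [F_\max] \otimes q\,t^{\Phi_\max}$.
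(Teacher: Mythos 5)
The first thing to say is that the paper contains no proof of this statement at all: Theorem \ref{theo:MainMcDT} is imported verbatim from \cite{McDuffTolman06} (it is literally labelled ``Theorem 1.10 and Lemma 3.10 of \cite{McDuffTolman06}''), and the authors only use it, they do not reprove it. So there is no in-paper argument to compare yours against; what can be assessed is whether your sketch is a viable reconstruction of the McDuff--Tolman proof. Your overall architecture is the right one: the torsor structure of section classes over $H_2^S(M;\bb Z)$ trivialized by $A_\max$, the degree count giving $\deg(a_B)=2+2c_1(B)$ combined with NEF and $0\leq \deg(a_B)\leq 4$ to get item \textit{(i)}, and equivariant localization with respect to the fibrewise circle action for item \textit{(ii)}. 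However, for \textit{(ii)} you explicitly defer the entire content (``the delicate part, where all the real work sits... is the heart of McDuff and Tolman's argument''), so as a proof this step is not carried out, only named.

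The more concrete problem is your derivation of \textit{(iv)}. You argue that a $c_1(B)=1$ contribution would force the existence of a $c_1=1$ sphere meeting but not contained in $F_\max$, ``contradicting the hypothesis.'' But the hypothesis of \textit{(iv)} is precisely that every sphere meeting but not contained in $F_\max$ has $c_1\geq 1$ --- such a sphere is exactly what that hypothesis \emph{allows}, so no contradiction is obtained and the step fails as written. The actual mechanism for excluding $c_1(B)=1$ is different: by \textit{(i)} such a term would have $a_B\in H_4(M;\bb Z)$, hence $a_B=\lambda[M]$ with $\lambda=\GW^{M_\Lambda}_{A_\max+B,1}(\pt)$, and one shows this invariant vanishes by a dimension/localization count (this is exactly what the paper's own Remark following the theorem alludes to when it says the $|a_B|=4$ case ``can easily be ruled out,'' pointing to the end of the proof of \cite[Theorem 1.10]{McDuffTolman06}). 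Your deduction of \textit{(iii)} from \textit{(ii)} and \textit{(iv)} is fine once \textit{(iv)} is repaired.
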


\begin{rema}\label{rem:On-Main-McDT-Theo}
Item \textit{(i)} above reads: If $a_B\neq 0$ then $c_1(B)=0$ and $|a_B|=2$. Indeed, when $M$ is 4--dimensional, $|a_B|=4$ means that $a_B$ has to be a multiple of the fundamental class $[M]$, however this case can easily be ruled out. (See for example the end of the proof of \cite[Theorem 1.10]{McDuffTolman06}.)

Item \textit{(ii)} is  \cite[Lemma 3.10]{McDuffTolman06} and shows that, even though the formula above might contain infinitely many terms, computing the Seidel morphism is somehow ``local'' (that is, one does not need to know the whole polytope).
\end{rema}

Recall the notation we introduced in Section \ref{sec:example}: We consider the case when the polytope $P$, associated to $M$, admits $n \geq 4$ facets, $D_1, \ldots D_n$. These facets correspond to divisors whose homology classes we respectively denote by $A_1, \ldots A_n$. We put $A_n = [F_\max]$ and we see the indices mod $n$. For any $n$--tuple $\bar{a}=(a_1,\dots,a_n) \in \Z^n$, we denote by $A_{\bar a}=\sum_{i} a_i A_i$ the homology class of the union of (possibly multiply covered) spheres in $M$ whose projection to $P$ is given by $D_{\bar a}=\cup_{i} D_i$. 

Thus Theorem \ref{theo:MainMcDT}, combined with Remark \ref{rem:On-Main-McDT-Theo}, implies that the Seidel element is given by
\begin{align*} 
  S(\Lambda) = A_n\otimes q t^{\Phi_\max} + \sum_{{\bar a}} a_{A_{\bar a}} \otimes q t^{\Phi_\max-\omega({A_{\bar a}})} 
\end{align*}
where $a_{A_{\bar a}}\neq 0$ if and only if 
\begin{enumerate}
\item $D_{\bar a}$ is connected and intersects $D_{n}$,
\item $c_1(A_{\bar a})=0$ (i.e, by NEF condition, for all $i$ so that $a_i\neq 0$, $c_1(A_i)=0$).
\end{enumerate}

In Theorem \ref{theo:contributions} below, we compute each contribution $a_{A_{\bar a}}$ in the case of polytopes where any $D_{\bar a}$ satisfying (1) and (2) contains at most two facets corresponding to spheres with vanishing first Chern number. Notice that in case the facets corresponding to divisors with vanishing first Chern number are not $D_n$ and/or $D_1$ (that is, Cases \textit{(3b)} and \textit{(3c)}), the content of Section \ref{sec:toric-topol-data} has to be slightly adapted.

\begin{theo}\label{theo:contributions}
Let $(M, \omega)$ be a closed NEF toric 4--dimensional symplectic manifold. Assume that its associated Delzant polytope has $n \geq 4$ facets. Let $\Lambda$ be a circle action, whose maximal fixed point component is a divisor $F_\max$ and denote $A_n=[F_\max]$ its homology class. The following homology classes have non trivial contributions to $S(\Lambda)$, the Seidel element associated to $\Lambda$:
  \begin{enumerate}
  \item $A_n$ contributes by $a_{A_n}=A_n$.
  \item If $c_1(A_n)=0$, 
    \begin{enumerate}[(2a)] 
    \item then $kA_n$ (with $k>0$) contributes by $a_{kA_n}=A_n$,
   \item and if $c_1(A_1)=0$, then $kA_n+lA_1$ (with $k\geq 0$ and $l>0$) contributes and its contribution is $a_{kA_n+lA_1}= \left\{ \begin{array}{cc} A_n &\mbox{if } k\geq l, \\ -A_1 &\mbox{otherwise.}  \end{array} \right.$
    \end{enumerate}
  \item If $c_1(A_n)\neq 0$,
    \begin{enumerate}[(3a)]
    \item if $c_1(A_1)=0$, then $kA_1$ (with $k>0$) contributes by $a_{kA_1}=-A_1$,
    \item if $c_1(A_1)=0$ and $c_1(A_2)=0$, then $kA_1+lA_2$ (with $k>0$ and $l>0$) also contributes, and its contribution is $$a_{kA_1+lA_2}= \left\{ \begin{array}{cc} -A_1 &\mbox{if } k\geq l, \\ A_2 &\mbox{otherwise.}  \end{array} \right.$$
    \item if $c_1(A_{n-1})=0$ and $c_1(A_1)=0$, then $kA_{n-1}$ and $lA_1$ (with $k>0$ and $l>0$) also contribute, with respective contributions $a_{kA_{n-1}}= -A_{n-1}$ and $a_{lA_1}= -A_1$.
    \end{enumerate}
  \end{enumerate}
Moreover, in each case, if the facets immediately next to the ones mentioned above correspond to spheres with non-zero first Chern number, then these are the only non-trivial contributions.
\end{theo}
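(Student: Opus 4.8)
The plan is to start from the reduced form of the Seidel element provided by Theorem~\ref{theo:MainMcDT} together with Remark~\ref{rem:On-Main-McDT-Theo}, namely
\begin{align*}
  S(\Lambda) = A_n\otimes qt^{\Phi_\max} + \sum_{\bar a} a_{A_{\bar a}} \otimes qt^{\Phi_\max-\omega(A_{\bar a})},
\end{align*}
where $a_{A_{\bar a}} \in H_2(M;\Z)$ is nonzero only when $D_{\bar a}$ is connected, meets $D_n$, and satisfies $c_1(A_{\bar a})=0$. The first task is to enumerate the admissible $\bar a$. By the NEF hypothesis every $A_i$ with $a_i\neq 0$ carries a $J$--holomorphic representative with $c_1(A_i)\geq 0$; since these contributions are effective (so all $a_i>0$ on the support) and must sum to $c_1(A_{\bar a})=0$, each such $A_i$ has $c_1(A_i)=0$. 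Combined with connectedness, the requirement that the support meet $D_n$, and the standing assumption that at most two consecutive facets near $D_n$ carry vanishing first Chern number, this pins the support of $\bar a$ down to exactly the families listed in the six cases of Figure~\ref{fig:cases}. The final clause of the theorem is then immediate: a facet with $c_1\neq 0$ can never belong to the support, so a connected $D_{\bar a}$ cannot extend beyond the $c_1=0$ region.

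It remains to identify each contribution $a_B$ for $B=A_{\bar a}$ admissible. Since $a_B\in H_2(M;\Z)$ (using Remark~\ref{rem:On-Main-McDT-Theo}~\textit{(i)} in dimension $4$) and $\dim M=4$, the class is determined by its intersection numbers against a basis of $H_2(M)$; by the defining relation $a_B\cdot_M c=\GW^{M_\Lambda}_{A_\max+B,1}(c)$ this reduces everything to computing one--point Gromov--Witten invariants of the $6$--dimensional toric manifold $M_\Lambda$ of Proposition~\ref{prop:TotalSpace-Toric}. Concretely I would test against the degree--$4$ classes $Z_iZ_j$ of $M_\Lambda$ and read off $a_B$ through the homological identifications $A_n=\lambda_1$, $A_1=\lambda_2-2\lambda_1$, $A_\max=\lambda_b$ recorded in Section~\ref{sec:toric-topol-data}. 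The ``smallest'' classes $B\in\{A_n,\,A_1,\,A_n+A_1\}$ are handled directly by Spielberg's localization formula (Theorem~\ref{mainSpielberg}), exactly as in Lemma~\ref{lemm:GW_Z1Zb-ZnZb}; these base computations already force $a_{A_n}=A_n$, $a_{A_1}=-A_1$, and the value of $a_{A_n+A_1}$ (one checks e.g.\ that $a_{A_1}\cdot_M A_1=-A_1^2=2$ is consistent with $\GW^{M_\Lambda}_{A_\max+A_1,1}(Z_1Z_b)=2$).

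For the remaining families --- arbitrary multiple covers $kA_n$, the combinations $kA_n+lA_1$, and their analogues in Cases \textit{(3a)--(3c)} --- I would compute $\GW^{M_\Lambda}_{A_\max+B,1}$ by induction on $k+l$, using the splitting axiom of Proposition~\ref{prop:axiom-for-GW-inv} to decompose a four--point invariant over all factorizations $A_\max+B=A_0+A_0'$ into products of three--point invariants, thereby feeding the already--computed lower cases into the higher ones. The cohomology ring relations and the explicit coefficients of $G^{-1}$ gathered in Lemma~\ref{lem:relationscoeffmatrix} are what make these recursions close up; this is the content of the auxiliary Theorem~\ref{GWinvariants}. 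Translating the resulting values back through the intersection pairing then yields each $a_B$, and in particular produces the dichotomy $a_{kA_n+lA_1}=A_n$ for $k\geq l$ and $-A_1$ otherwise (respectively $-A_1$ versus $A_2$ in Case~\textit{(3b)}), which emerges from the way the recursion depends on the comparison of the two multiplicities. The main obstacle is precisely this inductive step: the splitting axiom generates many terms, keeping track of which intermediate classes $A_0,A_0'$ actually contribute and with which weights is delicate, and one must verify that the class/sign flip at $k=l$ is reproduced exactly by the recursion rather than merely assumed.
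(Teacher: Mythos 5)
Your overall strategy coincides with the paper's: reduce each $a_B$ to one--point Gromov--Witten invariants of the toric total space $M_\Lambda$ paired against degree--$4$ classes $Z_iZ_j$, settle the base cases by Spielberg's localization formula as in Lemma~\ref{lemm:GW_Z1Zb-ZnZb}, and then run an induction on $k+l$ via the splitting axiom, closing the recursion with the coefficients of $G^{-1}$ from Lemma~\ref{lem:relationscoeffmatrix}. (One organizational point you omit: the paper first writes $a_B=\sum_i a_iA_i$ with $a_2=a_3=0$ and pairs against $\PD(A_3),\PD(A_4),\dots$ going around the polytope to show that only the coefficients of $A_n$ and $A_1$ can survive, so that only the three invariants of Theorem~\ref{GWinvariants} are actually needed.)

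The one genuine gap is that your induction, as described, does not close. When you apply the splitting axiom to $\GW^{M_\Lambda}_{A_\max+kA_n+lA_1,4}(\cdot;\pt)$, the decompositions $A_\max+B=A_0+A_0'$ include those in which one summand is a pure fiber class $k_0A_n+l_0A_1$ carrying no section component. The corresponding three--point factors reduce, via the divisor axiom, to the zero--point invariants $\GW^{M_\Lambda}(k_0A_n+l_0A_1)$, and these are \emph{not} among the ``already--computed lower cases'' of your induction on section--class one--point invariants: they require a separate virtual localization argument (Proposition~\ref{nopointsinvariants}), which shows that $\GW^{M_\Lambda}(kA_n)$, $\GW^{M_\Lambda}(lA_1)$ and $\GW^{M_\Lambda}(k(A_n+A_1))$ equal $-1/k^3$ (resp.\ $-1/l^3$) while all other mixed classes contribute $0$, using the obstruction bundles $E_k$, $V_k$ over $\Mbar_{0,0}(\bbcp^1,k)$ and Manin's computation of $\int e(V_k)$. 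Relatedly, a single application of the splitting axiom only re-expresses one unknown in terms of others; the paper's mechanism is to evaluate each four--point invariant along \emph{two different} partitions of the marked points and equate the results, and to do this for three choices of insertions ($Z_1,Z_b,Z_b,Z_1$; $Z_1,Z_b,Z_b,Z_2$; $Z_2,Z_b,Z_b,Z_2$), producing a $3\times 3$ linear system in the three unknown invariants whose unique solution gives Theorem~\ref{GWinvariants} --- it is the resolution of this system, fed by the zero--point invariants, that produces the sign and class flip at $k=l$, rather than the recursion alone.
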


As a corollary, we compute the Seidel element associated to $\Lambda$ in these different cases. (See also Figure \ref{fig:cases} in the introduction.) Recall that we also compute in Appendix \ref{sec:BrutalComputationsSeidSMorph} the Seidel element associated to $\Lambda$ when there exist three divisors in the vicinity of $A_n$ with vanishing first Chern number.

\begin{theo}\label{theo:SeidelsMorphism}
Under the assumptions and with the notation of Theorem \ref{theo:contributions} above, the Seidel element associated to $\Lambda$ is as follows.
  \begin{enumerate}
  \item\label{list:no-0} If $c_1(A_n)$, $c_1(A_{n-1})$ and $c_1(A_1)$ are all non-zero, then $S(\Lambda)=A_n\otimes qt^{\Phi_\max}$.
  \item If $c_1(A_n)=0$, 
   \begin{enumerate}[(2a)] 
   \item\label{list:1-0-A} but  $c_1(A_{n-1})$ and $c_1(A_1)$ are non-zero, then $$S(\Lambda)=A_n\otimes q\,\frac{t^{\Phi_\max}}{1-t^{-\omega(A_n)}} \,,$$
   \item \label{list:2-0-A-Cons} and  $c_1(A_1)=0$ but $c_1(A_{n-1})$ and $c_1(A_2)$ non-zero, then 
$$ S(\Lambda)=\left[ A_n\otimes q\,\frac{t^{\Phi_\max}}{1-t^{-\omega(A_n)}}-A_1\otimes q\,\frac{t^{\Phi_\max-\omega(A_1)}}{1-t^{-\omega(A_1)}} \right]\cdot \frac{1}{1-t^{-\omega(A_n)-\omega(A_1)}} \,.
$$
    \end{enumerate}
  \item If $c_1(A_n)\neq 0$, 
   \begin{enumerate}[(3a)]
    \item\label{list:1-0-notA} if $c_1(A_1)=0$ and $c_1(A_{n-1}), c_1(A_2)$ non-zero, then  $$S(\Lambda)=A_n\otimes qt^{\Phi_\max}-A_1\otimes q\,\frac{t^{\Phi_\max-\omega(A_1)}}{1-t^{-\omega(A_1)}} \,,$$
    \item\label{list:2-0-notA-Cons} if $c_1(A_1)=c_1(A_2)=0$ but $c_1(A_{n-1})$ and $c_1(A_{3})$ non-zero, then 
      \begin{multline*}
  S(\Lambda)  =  A_n\otimes qt^{\Phi_\max}-A_1\otimes q\,\frac{t^{\Phi_\max-\omega(A_1)}}{1-t^{-\omega(A_1)}}  \\
\qquad -\left(A_1\otimes q\,\frac{t^{\Phi_\max}}{1-t^{-\omega(A_1)}}-A_2\otimes q\,\frac{t^{\Phi_\max-\omega(A_2)}}{1-t^{-\omega(A_2)}}\right)\cdot 
\frac{t^{-\omega(A_1)-\omega(A_2)}}{1-t^{-\omega(A_1)-\omega(A_2)}} \,,
      \end{multline*}
     \item\label{list:2-0-notA-notCons} if $c_1(A_{n-1})=c_1(A_1)=0$, $c_1(A_{n-2})$ and $c_1(A_2)$ non-zero, then 
$$S(\Lambda)=A_n\otimes qt^{\Phi_\max}-A_{n-1}\otimes q\,\frac{t^{\Phi_\max-\omega(A_{n-1})}}{1-t^{-\omega(A_{n-1})}}-A_1\otimes q\,\frac{t^{\Phi_\max-\omega(A_1)}}{1-t^{-\omega(A_1)}} \,.$$
    \end{enumerate}
  \end{enumerate}
\end{theo}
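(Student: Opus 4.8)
The plan is to read off all six formulas from Theorem~\ref{theo:contributions} by assembling the contributions it lists into closed-form power series; the whole substance of the computation is already contained in Theorem~\ref{theo:contributions} (and ultimately in the Gromov--Witten computations behind it), so what remains here is purely algebraic. The starting point is the reformulation of the McDuff--Tolman expansion (Theorem~\ref{theo:MainMcDT} together with Remark~\ref{rem:On-Main-McDT-Theo}) recorded above,
\[ S(\Lambda) = A_n\otimes q\,t^{\Phi_\max} + \sum_{\bar a} a_{A_{\bar a}}\otimes q\,t^{\Phi_\max-\omega(A_{\bar a})}, \]
the sum being over the tuples $\bar a$ for which $D_{\bar a}$ is connected, meets $D_n$, and $c_1(A_{\bar a})=0$. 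In each configuration of Figure~\ref{fig:cases} the final clause of Theorem~\ref{theo:contributions} guarantees that the only surviving $\bar a$ are those supported on the explicitly named facets, so the sum collapses to a one- or two-parameter series in the monomials $t^{-\omega(A_i)}$, which I then resum.

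First I would dispatch the four decoupled cases (1), (2a), (3a), (3c), in which every contributing class is a positive multiple $kA_i$ of a single facet class. Case (1) is immediate: the three neighbouring facets carry $c_1\neq0$, no proper $\bar a$ survives, and only the leading term $A_n\otimes q\,t^{\Phi_\max}$ remains. In (2a) the classes $kA_n$ ($k\ge 0$) each contribute $A_n$, so summing the geometric series $\sum_{k\ge0}t^{-k\omega(A_n)}=1/(1-t^{-\omega(A_n)})$ gives the stated answer; (3a) is identical up to the sign $-A_1$ of the contribution, and (3c) only differs in that two independent series, attached to the non-adjacent facets $D_{n-1}$ and $D_1$, simply add.

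The real work lies in the two coupled cases (2b) and (3b), where $\bar a$ ranges over pairs $kA_n+lA_1$ (respectively $kA_1+lA_2$) and the contribution jumps across the diagonal $k=l$. Here I would set $x=t^{-\omega(A_n)}$, $y=t^{-\omega(A_1)}$ (resp.\ $u,v$), observe that the listed contributions are captured uniformly by ``$A_n$ if $k\ge l$, $-A_1$ if $k<l$'' over all $(k,l)\ge0$ (the leading term being the $(0,0)$ entry), and split the lattice sum over the two triangles $\{k\ge l\}$ and $\{k<l\}$. Each triangle sums as a geometric series in the diagonal variable $xy$ times a residual factor: the coefficient of $A_n$ collapses to $1/\big((1-x)(1-xy)\big)$, while the coefficient of $A_1$, after combining $\tfrac{y}{1-y}$ with $\tfrac{xy}{1-xy}$ over a common denominator, reduces to $-y/\big((1-y)(1-xy)\big)$; factoring out $1/(1-xy)$ then reproduces the bracketed expression of (2b), and the analogous split (now with the leading $A_n$ kept aside, since $c_1(A_n)\neq0$ in (3b)) yields (3b).

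The main, if elementary, obstacle is precisely this diagonal bookkeeping in (2b) and (3b): one must correctly assign the boundary $k=l$, reconcile the ranges $k\ge0$ versus $k\ge1$ dictated by which item of Theorem~\ref{theo:contributions} supplies each family, and verify that the two triangular sums recombine into the claimed product form without leaving spurious cross terms. Once the coefficients of the distinct classes $A_i$ are shown to factor as above, all six formulas follow, and the three-divisor analogues quoted in Appendix~\ref{sec:BrutalComputationsSeidSMorph} are obtained by the same resummation with one more facet in play.
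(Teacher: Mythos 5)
Your proposal is correct and follows essentially the same route as the paper: Theorem \ref{theo:SeidelsMorphism} is deduced from Theorem \ref{theo:contributions} purely by resumming the listed contributions as geometric series, with cases (1), (2a), (3a), (3c) being single series and cases (2b), (3b) requiring the split of the lattice sum over $kA_i+lA_j$ along the diagonal $k=l$ (the paper parametrizes the two triangles as $k(A_1+A_2)+lA_1$ and $k(A_1+A_2)+lA_2$, which is the same bookkeeping as your diagonal-variable $xy$ factorization). Your closed forms for the triangular sums match the paper's.
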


We start by deducing Theorem \ref{theo:SeidelsMorphism} from Theorem \ref{theo:contributions}. The proof of the latter is postponed to the next subsection since it is much more involving.

\begin{proof}[Proof of Theorem \ref{theo:SeidelsMorphism}]
It is a staigthforward consequence of Theorems \ref{theo:MainMcDT} and \ref{theo:contributions}.  
\begin{itemize}
\item[(\ref{list:no-0}):]  By Theorem \ref{theo:contributions}, only $A_n$ contributes and its contribution is of the form $S(\Lambda)=A_n\otimes qt^{\Phi_\max}$.
\item[(\ref{list:1-0-A}):] Here $A_n$ and its iterations induce the only non-trivial contributions. The contribution of $kA_n$ being $A_n\otimes q^{-c_1(kA_n)} t^{\Phi_\max-\omega(kA_n)}$, we get the result by summing over $k$ (starting at $k=0$):
  \begin{align*}
    S(\Lambda) = A_n\otimes q \, t^{\Phi_\max} \left( \sum_{k=0}^\infty (t^{-\omega(A_n)})^k \right) = A_n\otimes q\,\frac{t^{\Phi_\max}}{1-t^{-\omega(A_n)}}
  \end{align*}
\item[(\ref{list:1-0-notA}):]  This case is similar to (\ref{list:1-0-A}) except that we sum the contributions of all the $kA_1$'s starting at $k=1$ (thus, the new $-\omega(A_1)$ as power of $t$).
\item[(\ref{list:2-0-notA-notCons}):]  This case is similar to (\ref{list:1-0-notA}) (but for both $A_{n-1}$ and $A_1$).
\end{itemize}

Now we turn to (\ref{list:2-0-notA-Cons}). The first two terms coincide with the sum of the contributions induced by $A_n$ and $kA_1$. However, we also have to count the contributions of $kA_1+lA_2$. As before, we can see that
\begin{multline*}
   -A_1\otimes q\,\frac{t^{\Phi_\max-\omega(A_1)-\omega(A_2)}}{(1-t^{-\omega(A_1)})(1-t^{-\omega(A_1)-\omega(A_2)})} \\ 
= \sum_{k=1, l=0}^{\infty} a_{k(A_1+A_2)+lA_1} \otimes q t^{\Phi_\max-(k+l)\omega(A_1)-k\omega(A_2)}
\end{multline*}
which sums the contributions of $k(A_1+A_2)+lA_1$ (with $k\geq 1$ and $l\geq 0$), that is, the contributions of all terms of the form $kA_1+lA_2$ with $k\geq l \geq 1$. In the same way,
\begin{multline*}
  A_2\otimes q\,\frac{t^{\Phi_\max-2\omega(A_2)-\omega(A_1)}}{(1-t^{-\omega(A_2)})(1-t^{-\omega(A_1)-\omega(A_2)})} 
= \sum_{k,l=1}^{\infty} a_{k(A_1+A_2)+lA_2} \otimes q t^{\Phi_\max-k\omega(A_1)-(k+l)\omega(A_2)}
\end{multline*}
which sums the contributions of all terms of the form $kA_1+lA_2$ with $k< l$. Thus the formula given for the case (\ref{list:2-0-notA-Cons}) is indeed the sum of all non-trivial contributions.

Finally, let us look at (\ref{list:2-0-A-Cons}). First decompose 
\begin{align*}
  \frac{1}{1-t^{-\omega(A_n)-\omega(A_1)}} = 1+ \frac{t^{-\omega(A_n)-\omega(A_1)}}{1-t^{-\omega(A_n)-\omega(A_1)}}
\end{align*}
and by replacing, we check that
\begin{align*}
  S(\Lambda) &= \left[ A_n\otimes q\,\frac{t^{\Phi_\max}}{1-t^{-\omega(A_n)}}-A_1\otimes q\,\frac{t^{\Phi_\max-\omega(A_1)}}{1-t^{-\omega(A_1)}} \right]\cdot \frac{1}{1-t^{-\omega(A_n)-\omega(A_1)}}\\
&= A_n\otimes q\,\frac{t^{\Phi_\max}}{1-t^{-\omega(A_n)}}-A_1\otimes q\,\frac{t^{\Phi_\max-\omega(A_1)}}{1-t^{-\omega(A_1)}} \\ & \quad+ A_n\otimes q\,\frac{t^{\Phi_\max-\omega(A_n)-\omega(A_1)}}{(1-t^{-\omega(A_n)})(1-t^{-\omega(A_n)-\omega(A_1)})} \\ 
& \quad -A_1\otimes q\,\frac{t^{\Phi_\max-2\omega(A_1)-\omega(A_n)}}{(1-t^{-\omega(A_1)})(1-t^{-\omega(A_n)-\omega(A_1)})}
\end{align*}
Now the first term counts the contributions of all terms of the form $kA_n$ (as in (\ref{list:1-0-A}) above), the second term counts the contributions of $kA_1$ (or $A_n+kA_1$, see above) and then the last two count (as for (\ref{list:2-0-notA-Cons}) but with $A_n$ playing the role of $A_1$ and $A_1$ playing the role of $A_2$) all the contributions of the terms of the form $kA_n+lA_1$ (with $k$ and $l$ both non-zero).
\end{proof}

\subsection{Proof of Theorem \ref{theo:contributions}}
\label{sec:proof-theor-refth}

The proof is more or less a case-by-case proof and we focus on Case \textit{(\ref{list:2-0-A-Cons})}, since all the difficulties which one might encounter are already present and since the methods used to compute the Gromov--Witten invariants are the same.
Notice that Case \textit{(2b)} is one of the spectific cases described in Section \ref{sec:example}.

 We need to determine the class $a_B$ of Theorem \ref{theo:MainMcDT} where $B=\akl \in H_2(M;\bb Z)$. Recall that this class is determined by the requirement that 
$$ a_B \cdot c = \GW_{A_\max+B,1}^{M_\Lambda} (c),   \quad \mbox{for all } c \in H_*(M;\Z).$$
  In the notation for the Gromov--Witten invariant we can either use the homology class  $c$ or its Poincar\'e dual.  Let us define $B_{k,l}:= A_\max+ \akl$.
Now we claim that in order to prove  the theorem in Case \textit{(\ref{list:2-0-A-Cons})} it is sufficient to compute the following Gromov--Witten invariants.
\begin{theo}\label{GWinvariants}
  For any $k,l \in \N$ we have 
\begin{gather*}
\GW_{B_{k,l},1}^{M_\Lambda}(Z_1Z_2)= \left\{ \begin{array}{cl}
0 &\mbox{if } k \geq l \\
2 &\mbox{if } k < l \end{array} \right., \quad 
\GW_{B_{k,l},1}^{M_\Lambda}(Z_1Z_b)= \left\{ \begin{array}{cl}
1 &\mbox{if } k \geq l \\
2 &\mbox{if } k < l \end{array} \right. 
\ \ \mbox{and } \\
\GW_{B_{k,l},1}^{M_\Lambda}(Z_2Z_b)= \left\{ \begin{array}{cl}
0 &\mbox{if } k \geq l \\
-1 &\mbox{if } k < l \end{array} \right.
\end{gather*}
where $Z_1,Z_2,Z_b \in H^2(M_\Lambda; \Q)$ are defined in Section \ref{sec:toric-topol-data}.
\end{theo}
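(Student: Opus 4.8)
The plan is to prove the three families of invariants simultaneously by induction on the pair $(k,l)$, taking the values of Lemma~\ref{lemm:GW_Z1Zb-ZnZb} as the base of the induction and driving the inductive step with the splitting axiom of Proposition~\ref{prop:axiom-for-GW-inv}. Since $A_\max+A_n=B_{1,0}$, $A_\max+A_n+A_1=B_{1,1}$ and $A_\max+A_1=B_{0,1}$, Lemma~\ref{lemm:GW_Z1Zb-ZnZb} is precisely the statement of the theorem for $(k,l)\in\{(1,0),(1,1),(0,1)\}$, which already exhibits one representative of each regime $k\geq l$ and $k<l$. Before running the induction I would record the numerology from Section~\ref{sec:toric-topol-data}: from $c_1(A_\max)=1$ and $c_1(A_n)=c_1(A_1)=0$ one gets $c_1(B_{k,l})=1$, so $\overline{\m M}_{0,m}(B_{k,l})$ has real dimension $2(m+1)$; in particular a $1$--point invariant of $B_{k,l}$ is nonzero only on a degree--$4$ insertion such as $Z_iZ_j$, whereas a $1$--point invariant on a single divisor vanishes for dimensional reasons. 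I would also tabulate the intersection numbers $\int_{B_{k,l}}Z_1=k-2l$, $\int_{B_{k,l}}Z_2=l$ and $\int_{B_{k,l}}Z_b=1$, read off from $A_n=\lambda_1$, $A_1=\lambda_2-2\lambda_1$, $A_\max=\lambda_b$ and the duality $Z_i(\lambda_j)=\delta_{ij}$.

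For the inductive step I would exploit the fact that the four--point invariant $\GW^{M_\Lambda}_{B_{k,l},4}(a_1,a_2,a_3,a_4;\pt)$, with the four arguments chosen among $Z_1,Z_2,Z_b$, is symmetric in its arguments, and expand it by the splitting axiom in two different groupings, say $\{12\,|\,34\}$ and $\{13\,|\,24\}$. Equating the two expansions eliminates the (a priori unknown) four--point number and yields a relation among three--point invariants alone. In each resulting sum over $B_{k,l}=C+D$ the constraint $c_1(C)+c_1(D)=1$ forces one of $C,D$ to be section--type ($c_1=1$) and the other fibre--type ($c_1=0$); NEF--ness, effectivity and the primitive collections \eqref{primitivecollections} then restrict the fibre part to $k'A_n+l'A_1$ with $0\leq k'\leq k$ and $0\leq l'\leq l$. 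Applying the divisor axiom to each factor collapses the section--type three--point invariant to a degree--$4$ one--point invariant $\GW^{M_\Lambda}_{B_{k'',l''},1}(\,\cdot\,)$ and the fibre--type one to a $0$--point invariant $\GW^{M_\Lambda}_{k'A_n+l'A_1,0}$ times intersection numbers.

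The two extreme decompositions (trivial fibre part) isolate the target: using $G^{-1}G=\id$ together with the pairing $g^{\nu\mu}$, the summand whose section class is the full $B_{k,l}$ contributes precisely $(\int_{B}a_i)(\int_{B}a_j)\,\GW^{M_\Lambda}_{B_{k,l},1}(a_p\cup a_q)$, so that a judicious choice of the four divisors produces enough independent WDVV relations to solve for $\GW^{M_\Lambda}_{B_{k,l},1}(Z_1Z_2)$, $\GW^{M_\Lambda}_{B_{k,l},1}(Z_1Z_b)$ and $\GW^{M_\Lambda}_{B_{k,l},1}(Z_2Z_b)$. The remaining ``middle'' terms involve only strictly smaller section invariants, known by the inductive hypothesis, and fibre $0$--point invariants; evaluating the $g^{\nu\mu}$ by Lemma~\ref{lem:relationscoeffmatrix}, the products $Z_iZ_j$ through the table~\eqref{eq:table-part-of-B}, and feeding in the values of the $0$--point invariants turns each WDVV identity into an explicit recursion. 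One finally checks that, started from the base cases, this recursion reproduces the claimed closed forms.

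The delicate point is twofold. First, the recursion closes only once the fibre $0$--point invariants $\GW^{M_\Lambda}_{k'A_n+l'A_1,0}$ are known; these are exactly the invariants computed separately by localization in Proposition~\ref{nopointsinvariants}, and the whole scheme is arranged so as to feed those numbers in. Second, and this is where the real work lies, one must control precisely which decompositions $B_{k,l}=C+D$ contribute, which amounts to understanding along which chains of facets adjacent to $D_n$ and $D_1$ one can peel off effective $c_1=0$ classes; it is exactly this combinatorial dependence, reflected in the support of the $0$--point invariants, that produces the dichotomy between $k\geq l$ and $k<l$. Carrying the two regimes in parallel, tracking signs through $\int_{B_{k,l}}Z_1=k-2l$, and verifying the $G^{-1}$ evaluations constitute the bulk of the remaining, essentially routine, computation.
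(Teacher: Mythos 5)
Your proposal is correct and follows essentially the same route as the paper: induction on $(k,l)$ with the base cases supplied by Lemma \ref{lemm:GW_Z1Zb-ZnZb}, the inductive step driven by expanding well-chosen $4$--point invariants with insertions among $Z_1,Z_2,Z_b$ via the splitting axiom along two different partitions and equating, the divisor axiom collapsing the fibre-type factors to the $0$--point invariants of Proposition \ref{nopointsinvariants}, and the coefficients $g^{\nu\mu}$ handled through Lemma \ref{lem:relationscoeffmatrix}. The paper carries this out with the specific quadruples $(Z_1,Z_b,Z_b,Z_1)$, $(Z_1,Z_b,Z_b,Z_2)$, $(Z_2,Z_b,Z_b,Z_2)$, yielding a $3\times 3$ linear system in each regime $k\geq l$ and $k<l$, exactly as you outline.
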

Since the proof of this theorem is quite long and technical, we postpone it to Sections \ref{sec:proof-prop-nopointsinv} and \ref{sec:proof-theor-GWinv}, and we first finish the proof of Theorem \ref{theo:contributions} by proving the claim.

The class $a_B$ is a linear combination of the homology classes of the pre-images, under the moment map $\Phi$ of the facets of the polyope $P=\Phi(M)$, that is, 
\begin{equation}\label{a_B}
a_B= \sum_{i=1}^{n}a_i A_i, 
\end{equation}
where $a_i \in \Z$.  
Since the dimension of the $\Z$--module $H^S_2(M;\bb Z)$ is $n-2$, we  can assume that two of the coefficients $a_i$ vanish. The following lemma shows that we can choose the coefficients $a_2=a_3=0$.
\begin{lemm}
  All the classes $A_{i}$ are linear combinations of the basis elements $\{\lambda_1,  \hdots, $ $ \lambda_{n-2}\}$, defined in Section \ref{sec:example}.
\end{lemm}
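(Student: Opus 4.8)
The plan is to identify, inside $H_2(M_\Lambda;\Z)\cong R(\Sigma)$, the subgroup of fiber classes and to show that it is precisely the $\Z$--span of $\lambda_1,\ldots,\lambda_{n-2}$. Since each $A_i$ is the class of a curve contained in a single fiber $M$ of $\pi\co M_\Lambda\to S^2$ (this is how the $A_i$ were identified with invariant $2$--spheres in Section~\ref{sec:example}), it is automatically a fiber class, and the lemma follows at once from this identification.

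First I would pin down the fiber classes by a coordinate condition on $R(\Sigma)=\{(\gamma_1,\ldots,\gamma_n,\gamma_b,\gamma_t)\mid \sum\gamma_i\eta_i+\gamma_b\eta_b+\gamma_t\eta_t=0\}$. Reading off the third coordinate of the defining relation and using $\eta_i=(v_i,0)$, $\eta_b=-e_1-e_3$, and $\eta_t=e_3$ gives $-\gamma_b+\gamma_t=0$, so that $\gamma_b=\gamma_t$ holds on all of $R(\Sigma)$. In particular $\gamma_t=0$ forces $\gamma_b=0$, and the remaining relation then reduces to $\sum_{i=1}^n\gamma_i v_i=0$. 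This is exactly the description of $H_2(M;\Z)$ furnished by \eqref{homology} applied to the fan of $M$, so the subgroup $\{\gamma\in R(\Sigma)\mid \gamma_t=0\}$ coincides with the image of the fiber inclusion $H_2(M;\Z)\hookrightarrow H_2(M_\Lambda;\Z)$. (Equivalently, the top facet is a fiber, so $Z_t=\pi^*[\mathrm{pt}]$ and $\gamma_t=\langle Z_t,\gamma\rangle=\deg\pi_*\gamma$, which vanishes precisely on fiber classes.)

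Next I would match this subgroup against the distinguished basis. In the $\Z$--basis $\{\lambda_1,\ldots,\lambda_{n-2},\lambda_b\}$, only $\lambda_b$ has a nonzero last coordinate (namely $\gamma_t=1$), whereas each $\lambda_j$ with $j\leq n-2$ satisfies $\gamma_b=\gamma_t=0$. Hence, writing $\gamma=\sum_j c_j\lambda_j+c_b\lambda_b$ with $c_j,c_b\in\Z$, one gets $\gamma_t=c_b$, so $\gamma_t=0$ if and only if $c_b=0$, i.e.\ if and only if $\gamma\in\Z\langle\lambda_1,\ldots,\lambda_{n-2}\rangle$. Combining this with the previous step shows that the fiber subgroup $H_2(M;\Z)$ is exactly $\Z\langle\lambda_1,\ldots,\lambda_{n-2}\rangle$; since every $A_i$ lies in it, each $A_i$ is a $\Z$--linear combination of $\lambda_1,\ldots,\lambda_{n-2}$, as claimed.

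There is no analytic difficulty here; the only point that genuinely needs care is to keep the argument integral rather than merely rational, i.e.\ to ensure that $\lambda_1,\ldots,\lambda_{n-2}$ generate the fiber subgroup over $\Z$ and not just over $\Q$. This is what the basis argument of the previous paragraph secures: $\lambda_1,\ldots,\lambda_{n-2}$ are part of the distinguished $\Z$--basis of $H_2(M_\Lambda;\Z)$, and $\lambda_b$ is the unique basis vector with $\gamma_t\neq0$. The consistency of this is confirmed by the sample values $A_n=\lambda_1$ and $A_1=\lambda_2-2\lambda_1$ recorded in Section~\ref{sec:example}, both of which indeed involve no $\lambda_b$.
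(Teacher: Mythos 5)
Your argument is correct as a proof of the quoted statement, but it takes a genuinely different route from the paper's, and the difference matters for how the lemma is used. You identify the fiber subgroup of $H_2(M_\Lambda;\Z)\cong R(\Sigma)$ intrinsically: the third coordinate of the defining relation gives $\gamma_b=\gamma_t$ on all of $R(\Sigma)$, the condition $\gamma_t=0$ cuts out exactly the image of $H_2(M;\Z)$, and in the distinguished basis only $\lambda_b$ has $\gamma_t\neq 0$, so the fiber subgroup is $\Z\langle\lambda_1,\dots,\lambda_{n-2}\rangle$ and every $A_i$, being a fiber class, lies in it. This is shorter and more conceptual than the paper's proof, and your attention to integrality is well placed and correctly handled, since $\{\lambda_1,\dots,\lambda_{n-2},\lambda_b\}$ is a $\Z$--basis. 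The paper instead writes out each $A_i$ explicitly in the $\lambda_j$ (using $\gamma_{i,i+1}=1$ and the convexity of the polytope to control the coefficients $\gamma_{i,j}$) and then inverts these expressions recursively, starting from $\lambda_{n-2}=A_{n-1}$ and going around the polytope.

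The reason the paper bothers with the explicit recursion is that the lemma is invoked precisely to justify setting $a_2=a_3=0$ in $a_B=\sum_i a_iA_i$; for that one needs the converse inclusion, namely that each $\lambda_j$ --- and hence $A_2$ and $A_3$ --- is an integral linear combination of the $A_i$ with $i\neq 2,3$. Your argument shows that all the $A_i$ lie in $\Z\langle\lambda_1,\dots,\lambda_{n-2}\rangle$, but it says nothing about whether the $n-2$ classes $\{A_i\}_{i\neq 2,3}$ generate that lattice: a priori they could be dependent or span only a finite-index sublattice. So while your proof settles the statement as literally phrased, it would leave a gap at the very next step of the paper's argument; the recursive computation, whose successive leading coefficients are all equal to $1$, is what supplies the missing surjectivity over $\Z$.
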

\begin{proof}
 It is known from the diagram of Figure \ref{fig:Diagram}  that 
$A_n=\lambda_1$  and $A_1=\lambda_2-2 \lambda_1$ which gives $\lambda_1 =A_n$ and $\lambda_2=2A_n+A_1$. Recall that $\eta_i= \alpha_i e_1+ \beta_i e_2$ where $i=1, \dots, n$. Let $\gamma_{i,j}:= \alpha_j\beta_i- \alpha_i\beta_j$. It is not hard to check that Relation \eqref{relationvectorsII} implies that $\gamma_{i,i+1}=1$. Moreover $\gamma_{i,j} \neq 0$ if $j \neq i+1$ because the polytope is convex.  We can write   all the $A_i's$  as linear combinations of the basis elements $\lambda_i$, using the same argument as we use in  Section \ref{sec:example} for $A_n$ and $A_1$, which yields: 
\begin{align*}
& A_{n-1}=\lambda_{n-2}, && A_4=\lambda_3 + \gamma_{5,3}\,\lambda_4 +\lambda_5,\\
& A_{n-2}=\lambda_{n-3}+\gamma_{n-1,n-3}\,\lambda_{n-2}, & \cdots\qquad & A_3= \lambda_2 +\gamma_{4,2}\,\lambda_3 +\lambda_4,   \\
& A_{n-3}=\lambda_{n-4} + \gamma_{n-2,n-4}\,\lambda_{n-3} + \lambda_{n-2}, && A_2= \lambda_1 -d_2 \, \lambda_2 + \lambda_3. 
\end{align*} 
Since $\lambda_{n-2}=A_{n-1}$ it follows from the second equation that $\lambda_{n-3}= A_{n-2} -\gamma_{n-1,n-3}A_{n-1}.$ Substituting this in the third  equation we can find an expression of  
$\lambda_{n-4}$ as a linear combination of $A_{n-2}$ and $A_{n-1}$. Going around the polytope we easily see that we can, recursively, determine an expression of each $\lambda_i$  as a linear combination of the $A_i's$  with $i\neq 2,3$. 
In particular, we obtain expressions for $\lambda_3$ and $\lambda_4$ which implies, by the last two equations, that $A_2$ and $A_3$ are linear combinations of the remaining $A_i's$.
\end{proof}

 Therefore, from now on, we assume $a_2=a_3=0$ in the linear combination \eqref{a_B}.  Recall that 
$$ a_B \cdot c = \GW_{A_\max+B,1}^{M_\Lambda} (\PD ( c))$$ for $c \in H_2(M;\bb Z)$. If $c$ does not contain $A_{n-1}, A_n,A_1, A_2$ then clearly the Gromov--Witten invariant $\GW_{A_\max+B,1}^{M_\Lambda} (\PD (c) )$ vanishes when $B=\akl$.  Therefore 
$$ 0= \GW_{A_\max+B,1}^{M_\Lambda} (\PD (A_{3}) ) = a_B \cdot A_{3} = a_{4}$$
because $a_2=a_{3}=0$. Then, using that $a_4=0$, we get
$$ 0= \GW_{A_\max+B,1}^{M_\Lambda} (\PD (A_{4}) ) = a_B \cdot A_{4} = a_{5}$$ 
and by repeating the process around the polytope we get for all $k$, $3 \leq k \leq n-2$,
$$ 0= \GW_{A_\max+B,1}^{M_\Lambda} (\PD (A_{k}) ) = a_B \cdot A_{k} = a_{k+1}$$ 
so  that all the coefficients vanish except $a_n, a_1$. That is, we obtain $a_B=a_n  A_n +a_1A_1$ for some $a_n, a_1 \in \Z$  when  $B=\akl$. Since $\PD (A_2)= Z_2Z_b$ and $\PD (A_1)= Z_1Z_b$  it follows from Theorem \ref{GWinvariants} that if $k \geq l$ then 
\begin{align*}
 &0  = \GW_{\sigma_{B_{k,l},1}}^{M_\Lambda} (Z_2Z_b) = a_B \cdot A_2  = (a_n  A_n +a_1A_1) \cdot A_2= a_1,  \\
& 1   = \GW_{\sigma_{B_{k,l},1}}^{M_\Lambda} (Z_1Z_b) = a_B \cdot A_1  = (a_n  A_n +a_1A_1) \cdot A_1= a_n - 2a_1.  
\end{align*}
We conclude that $a_n =1$,  $a_1 =0$ and $a_B= A_n$ in this case.  If $k < l$ then we obtain
\begin{equation*}
-1  = \GW_{\sigma_{B_{k,l},1}}^{M_\Lambda} (Z_2Z_b) = a_1  \ \mbox{and} \
2   = \GW_{\sigma_{B_{k,l},1}}^{M_\Lambda} (Z_1Z_b) = a_n - 2a_1. 
\end{equation*}
Therefore, in this case, $a_n =0$,  $a_1 =-1$ and $a_B= -A_1$. This concludes the proof of Theorem \ref{theo:contributions}, Case \textit{(\ref{list:2-0-A-Cons})}.

\subsection{An intermediate result}
  \label{sec:proof-prop-nopointsinv}

Before giving the proof of Theorem \ref{GWinvariants}, we first need an intermediate result about some particular 0--point Gromov--Witten invariants. Recall that, by the divisor axiom, the 0--point invariant  $\GW^{M_\Lambda}_{0}(A)$, for $A \neq 0 \in H_2(M_\Lambda;\bb Z)$, is given by
$$\GW^{M_\Lambda}_0(A)= \frac{1}{h(A)^3} \GW_{A,3}^{M_\Lambda}(h,h,h)$$
where $h \in H^2(M_\Lambda;\bb Q)$ is such that $h(A)=\int_A h \neq 0$. From now on we will suppress the indication of the number of marked points  when that number is clear from the context and the expression for the Gromov--Witten invariant.

\begin{prop}\label{nopointsinvariants}
Let $k$ and $l$ be non-negative integers. Then 
$$\GW^{M_\Lambda}(kA_n+lA_1) = \left\{ \begin{array}{cl}
\displaystyle-\frac{1}{k^3} & \mbox{if} \quad l=0, \vspace{.1cm}\\
\displaystyle-\frac{1}{l^3} & \mbox{if} \quad k=0, \vspace{.1cm}\\
\displaystyle -\frac{1}{k^3} & \mbox{if} \quad k=l, \\
0 & \mbox{otherwise}. \end{array}\right.$$
\end{prop}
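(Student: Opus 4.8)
The plan is to compute these $0$--point invariants by reducing them to $1$--point invariants and then applying Spielberg's localization formula (Theorem \ref{mainSpielberg}). First I would use the divisor axiom: for a divisor $h \in H^2(M_\Lambda;\Q)$ with $h(A)\neq 0$, where $A = kA_n+lA_1$, the identity $\GW^{M_\Lambda}_0(A) = \frac{1}{h(A)}\GW^{M_\Lambda}_{A,1}(h)$ turns the problem into the evaluation of a $1$--point invariant. Since $A_n=\lambda_1$ and $A_1=\lambda_2-2\lambda_1$, one has $\int_A Z_1 = k-2l$ and $\int_A Z_2 = l$, so one can take $h=Z_1$ in the pure case $l=0$ and $h=Z_2$ otherwise. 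Because $c_1(A_n)=c_1(A_1)=0$ the virtual dimension matches and each invariant is a genuine number, so the full weight--dependence of the localization sum must cancel in the end.

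Next I would enumerate the torus--fixed graphs contributing to $\GW^{M_\Lambda}_{A,1}(h)$. The only invariant $2$--spheres whose classes are combinations of $A_n$ and $A_1$ are $A_n=V_{\sigma_2\cap\sigma_3}$ and $A_1=V_{\sigma_3\cap\sigma_4}$, and these form a chain meeting at the fixed point $\sigma_3$. The no--loop condition (c) in Spielberg's definition forbids more than one edge over each sphere, so the underlying graph is forced: a single edge of multiplicity $k$ over $A_n$ and/or a single edge of multiplicity $l$ over $A_1$, glued at $\sigma_3$ when both $k,l>0$; the marked point may sit at any of the $2$ (pure) or $3$ (mixed) vertices, giving that many terms to sum. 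Using the weights of Section \ref{mainexample} and Lemma \ref{weights}, I would first record that the normal bundle of each of $A_n$ and $A_1$ splits as $\mathcal O(0)\oplus\mathcal O(-2)$, the trivial summand being the common $\omega_b-\omega_t$ direction. For the pure classes $kA_n$ and $lA_1$ the single--edge contribution is then the standard Aspinwall--Morrison multiple--cover computation, which the edge factor of $T_\Gamma$ (carrying $(-1)^m=(-1)^3$) delivers as $-1/k^3$ and $-1/l^3$ respectively.

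The heart of the argument, and the main obstacle, is the mixed case $k,l>0$: the chain graph contributes a sum of three rational functions in the weights $a_1,a_2,\omega_b,\omega_t$ (one per marked--point placement), which I would simplify and show equals $h(A)\cdot(-1/k^3)$ when $k=l$ and vanishes identically otherwise. The conceptual reason for this dichotomy is that in the fiber $M$ the facets $D_n,D_1$ are two $(-2)$--curves meeting at a point, so the local geometry around $A_n\cup A_1$ is the Calabi--Yau thickening of the resolved $A_2$ surface singularity; the classes carrying nonzero genus--$0$ invariants are exactly the positive roots $A_n$, $A_1$, $A_n+A_1$ and their positive multiples, which is precisely the condition $l=0$, $k=0$, or $k=l$. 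Concretely I expect the balance $k=l$ to emerge from the cancellation of the weight--dependent factors of the valence--$2$ vertex $\sigma_3$ (whose $T_\Gamma$--contribution carries the exponent $t-3=-1$) against the two edge factors, the $k\neq l$ asymmetry producing a factor that forces the sum to zero. The delicate part will be the bookkeeping of the products $\prod_{\alpha\diamond\sigma_1}(\cdots)$ and the $\lambda_e^\alpha$ exponents, and verifying that the surviving expression is genuinely independent of the weights.
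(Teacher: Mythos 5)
Your overall strategy (divisor axiom to pass to a $1$--point invariant, then toric localization via Theorem \ref{mainSpielberg}) is a legitimate alternative route, but as written it has two genuine gaps. First, the graph enumeration is wrong: condition (c) of Spielberg's definition only forbids cycles in the CW--complex $\Gamma$; it does not prevent several edges (nor several vertices carrying the same label $\sigma(\vb)$) from lying over the same invariant sphere. A torus--fixed stable map in class $kA_n$ may be any tree of components multiply covering $V_{\sigma_2\cap\sigma_3}$ (or its top--fiber copy $V_{\sigma_7\cap\sigma_8}$), with edge multiplicities forming an arbitrary partition of $k$ and arbitrary branching; this is precisely why $T_\Gamma$ carries the valence--dependent factors $(\omega^{\sigma(\vb)}_{\mathrm{total}})^{t-1}$ and $\bigl(\sum_i 1/\omega_{F_i(\vb)}\bigr)^{t-3}$. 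The single degree--$k$ edge alone does not produce $-1/k^3$; the identity ``sum over all such trees $=\pm 1/k^3$'' is exactly the Aspinwall--Morrison/Manin statement $\int_{\Mbar_{0,0}(\bbcp^1,k)}e(V_k)=1/k^3$, which is the nontrivial input and cannot be read off from one graph. Second, the mixed case $k,l>0$, which you correctly identify as the heart of the matter, is not actually carried out: the $A_2$--root heuristic predicts the right answer, but the required cancellation over the full family of chain--and--branch graphs (whose number grows with $k$ and $l$) is left unverified, and that is where essentially all the work lies.

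For comparison, the paper avoids the graph combinatorics entirely: it localizes only with respect to a single $\C^*$ acting on the base $\bbcp^1$ of the fibration, identifies the two fixed loci with $\Mbar_{0,0}(\bbcp^1,k)$ (resp.\ a moduli space of bidegree--$(k,l)$ maps), and writes the contributions as $\int e(E_k)/e(L_u)$ and $\int e(E_k\otimes L_{-u})/u$ for the rank--$(2k-1)$ obstruction bundle $E_k=p_*\eva^*\cO_{\bbcp^1}(-2)$. The exact sequence $0\to\cO_{\cM}\to E_k\to V_k\to 0$ kills the first integral and reduces the second to $-\int e(V_k)=-1/k^3$ by Manin's formula, while in the mixed case a degree count (the relevant Euler classes have degree strictly less than the dimension of the fixed locus) forces vanishing unless $k=l$. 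To salvage your route you would need either to reprove Manin's identity through Spielberg's graph sum or to import it as a black box --- at which point the base--localization argument is both shorter and cleaner.
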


\begin{proof} In Steps 1 and 2 below, we prove the result in the first two cases. Then, in Step 3., we prove the result in the remaining cases by adapting Steps 1 and 2.  A good reference for what follows is  \cite{Melissa13}.\\

\textbf{Step 1.} Let $k > 0$. We begin with some preliminaries about moduli spaces of stable curves. Let $\Mbar_{0,n}( \bbcp^1,k)$ denote the moduli space
of genus $0$, $n$--pointed, degree $k$ stable maps to $ \bbcp^1$.
Let $p:\Mbar_{0,1}( \bbcp^1,k)\to \Mbar_{0,0}( \bbcp^1,k)$
be the universal curve, and let $\eva:\Mbar_{0,1}( \bbcp^1,k) \to  \bbcp^1$
be the evaluation map at the marked point. 
$\Mbar_{0,0}( \bbcp^1,k)$ is a smooth Deligne--Mumford stack of dimension $2k-2$ and the map $p$ is the forgetting morphism, which forgets the marked point. The following short exact sequence over $ \bbcp^1$: 
$$
0\to \cO_{ \bbcp^1} \to \cO_{ \bbcp^1}(1)\oplus \cO_{ \bbcp^1}(1) \to \cO_{ \bbcp^1}(2)=T_{ \bbcp^1}\to 0,
$$
induces the short exact sequence
$$
0\to T _{ \bbcp^1}^*=\cO_{ \bbcp^1}(-2) \to \cO_{ \bbcp^1}(-1)\oplus \cO_{ \bbcp^1}(-1)\to \cO_{ \bbcp^1}\to 0.
$$
Given a genus $0$, $0$--pointed, degree $k$ stable map $u:C\to  \bbcp^1$, 
we have a short exact sequence of vector bundles over the domain $C$:
\begin{equation}\label{eqn:short}
0\to u^*\cO_{ \bbcp^1}(-2) \to u^*\cO_{ \bbcp^1}(-1)\oplus 
u^*\cO_{ \bbcp^1}(-1)\to \cO_C \to 0 \,.
\end{equation}
Since
$H^0(C,u^*\cO_{ \bbcp^1}(-2))=\{0\}$ and $H^0(C,u^*\cO_{ \bbcp^1}(-1)\oplus u^*\cO_{ \bbcp^1}(-1))=\{0\}$,
the long exact sequence in cohomology associated to \eqref{eqn:short}
becomes
$$
0\to H^0(C,\cO_C)\to H^1(C, u^*\cO_{ \bbcp^1}(-2))
\to H^1(C,u^*\cO_{ \bbcp^1}(-1)\oplus u^*\cO_{ \bbcp^1}(-1))\to 0,
$$
where the complex dimension of $H^1(C,u^*\cO_{ \bbcp^1}(-2))$ and $H^1(C,u^*\cO_{ \bbcp^1}(-1)\oplus u^*\cO_{ \bbcp^1}(-1))$ are respectively $2k-1$ and $2k-2$. 

Next we define two bundles over $\Mbar_{0,0}( \bbcp^1,k)$:
$$
E_k:= p_*\eva^*\cO_{ \bbcp^1}(-2)\quad \mbox{and} \quad
V_k:= p_*\eva^*(\cO_{ \bbcp^1}(-1)\oplus \cO_{ \bbcp^1}(-1)) \,.
$$
The bundle $E_k$ has rank $2k-1$ and its fiber over $[u:C\to  \bbcp^1]$ is 
$H^1(C, u^*\cO_{ \bbcp^1}(-2))$, while $V_k$ has rank $2k-2$ and fiber $H^1(C, u^*\cO_{ \bbcp^1}(-1)\oplus u^*\cO_{ \bbcp^1}(-1))$.
They belong to the following short exact sequence
$$
0\to \cO_{\cM} \to E_k \to V_k \to 0,
$$
where $\cO_{\cM}$ is the trivial line bundle over $\Mbar_{0,0}( \bbcp^1,k)$. 
Therefore, the Euler and Chern classes of these bundles satisfy
\begin{equation}\label{chern}
e(E_k)=c_{2k-1}(E_k)=0,\quad
e(V_k) = c_{2k-2}(V_k)= c_{2k-2}(E_k), 
\end{equation}
Finally, recall that $\int_{[\Mbar_{0,0}( \bbcp^1,k)]}e(V_k)=\frac{1}{k^3}$ (see Manin \cite{M}).\\

\textbf{Step 2.} We now consider the case of a toric fibration $\pi: M_\Lambda \to  \bbcp^1$ where the total space is a toric manifold of (complex) dimension 3 and each fiber is diffeomorphic to the toric surface $M$. Using the previous notation, we want to show that 
$$
 \GW(kA_n)= \int_{[\Mbar_{0,0}(M_\Lambda,kA_n)]^\vir} 1 = - \frac{1}{k^3}.
$$

We first introduce some notation. We have
$$
H_{\C^*}^*(\mathrm{point};\Z)= H^*(B\C^*;\Z)=H^*( \bbcp^\infty;\Z)=\Z[u],
$$
where $u =c_1(\cO_{ \bbcp^\infty}(-1))$ is the
first Chern class of the tautological line
bundle over $B\C^*= \bbcp^\infty$. Let
$L_{mu}$ denote the $\C^*$--equivariant line
bundle over a point given by the 1--dimensional
$\C^*$--representation $t\mapsto t^m$. Then
$$
(c_1)_{\C^*}(L_{mu}) = m u \,\in\, H^2_{\C^*}(\mathrm{point};\Z)
=\Z [u].
$$

The action of $\C^*$ on $ \bbcp^1$ by $t\cdot[x,y]= [tx,y]$ has two fixed points: 
$0=[0:1]$ and $\infty=[1:0]$ and at these points
$$
(c_1)_{\C^*}(T_0 \bbcp^1)=u,
\quad (c_1)_{\C^*}(T_\infty \bbcp^1)=-u.
$$

There is a unique lift of this action to $M_\Lambda$ which acts trivially on $\pi^{-1}(0)$. 
This lift induces a $\C^*$--action on
$\Mbar_{0,0}(M_\Lambda,kA_n)$ and we have
$$
\Mbar_{0,0}(M_\Lambda,kA_n)^{\C^*} = F_0 \cup F_\infty
$$
where $F_0$ and $F_\infty$ can be identified with $\Mbar_{0,0}( \bbcp^1, k)$ as moduli spaces of maps 
to $\pi^{-1}(0)$ and $\pi^{-1}(\infty)$, respectively.

By virtual localization \cite{GP}, 
$$
\int_{[\Mbar_{0,0}(M_\Lambda,kA_n)]^\vir}1
=\int_{[F_0]^\vir}\frac{1}{e_{\C^*}(N^\vir_{F_0})}
+\int_{[F_\infty]^\vir }\frac{1}{e_{\C^*}(N^\vir_{F_\infty})} 
$$
where $N^\vir_{F_0}$ and $N^\vir_{F_\infty}$ are the virtual normal bundles to $F_0$ and $F_\infty$, respectively.

Let $\xi = [u:C\to  \bbcp^1]  \in F_0$. 
As explained in \cite{Melissa13}, the tangent space $T^1_\xi$ and the obstruction space $T^2_\xi$ 
at the moduli point $\xi  \in \Mbar_{0,0}(M_\Lambda,kA_n)$  fit in the {\it tangent-obstruction exact sequence}:
\begin{align}
  \begin{split}
\label{eq:tangentobstruction-es}
 0\to \Ext^0(\Omega_C, \cO_C) \to & H^0(C, u^*TM_\Lambda) \to T^1_\xi \\
& \to \Ext^1(\Omega_C, \cO_C) \to H^1(C, u^*TM_\Lambda) \to T^2_\xi \to 0
  \end{split}
\end{align}
where 
\begin{itemize}
\item $ \Ext^0(\Omega_C, \cO_C)$, respectively $\Ext^1(\Omega_C, \cO_C)$, is the space of infinitesimal automorphisms, respectively deformations, of the domain $C$,
\item $H^0(C, u^*TM_\Lambda)$, respectively $H^1(C, u^*TM_\Lambda)$, is the space of infinitesimal deformations of, respectively obstructions to deforming, the map $u$.
\end{itemize}
Equivalently, 
\begin{align*}
 0\to \Ext^0(\Omega_C, \cO_C) \to & H^0(C, u^*T \bbcp^1)\oplus L_u \to T^1_\xi \\
& \to \Ext^1(\Omega_C, \cO_C) \to H^1(C, u^*\cO(-2))\to T^2_\xi\to 0 \;.
\end{align*}
Together with the fact that $e(E_k)=0$, this leads to
$$
\int_{[F_0]^\vir} \frac{1}{e_{\C^*}(N^\vir_{F_0})} = \int_{\Mbar_{0,0}( \bbcp^1,k)}
\frac{e(E_k)}{e(L_u)} =0 \,.
$$

Suppose now that $\xi \in F_\infty$. 
In this case \eqref{eq:tangentobstruction-es} is equivalent to
\begin{align*}
0\to \Ext^0(\Omega_C, \cO_C) \to & H^0(C, u^*T \bbcp^1)\oplus L_{-u} \to T^1_\xi \\
& \to \Ext^1(\Omega_C, \cO_C) \to H^1(C, u^*\cO(-2))\otimes L_{-u}\to T^2_\xi\to 0 
\end{align*}
so that 
$$
\int_{[F_\infty]^\vir} \frac{1}{e_{\C^*}(N^\vir_{F_\infty})}=\int_{\Mbar_{0,0}( \bbcp^1,k)}
\frac{e(E_k\otimes L_{-u})}{u}
$$
where
$$
e(E_k\otimes L_{-u}) =\sum_{i=0}^{2k-1} (-u)^i c_{2k-1-i}(E_k) =
-u e(V_k) + \sum_{i=2}^{2k-1} c_{2k-1-i}(E_k)(-u)^i
$$
by \eqref{chern}. Together with the aforementioned result due to Manin, this now yields
$$
\int_{[F_\infty]^\vir}\frac{1}{e_{\C^*}(N^\vir_{F_\infty})} 
= - \int_{\Mbar_{0,0}( \bbcp^1,k)}e(V_k) =-\frac{1}{k^3} \,.
$$

This proves that $\GW(kA_n) = - \tfrac{1}{k^3}$, which finishes the proof of  the first case of the proposition. The second case follows by symmetry.\\

\textbf{Step 3.}
For the third and fourth cases we adapt Steps 1 and 2 above to the case of genus 0, 1--pointed,  stable maps $u: C \to  \bbcp^1 \times  \bbcp^1$ of degree $k$ to the first sphere and of degree $l$ to the second sphere. We denote the moduli space of such maps by $\Mbar_{0,1}( \bbcp^1\times  \bbcp^1,(k, l))$, it is a Deligne--Mumford stack of dimension $2k +2l$.

As above, we define the evaluation map $\eva:\Mbar_{0,2}( \bbcp^1\times  \bbcp^1,(k, l)) \to  \bbcp^1 \times  \bbcp^1$ and the forgetful map $p:\Mbar_{0,2}( \bbcp^1\times  \bbcp^1,(k, l))\to \Mbar_{0,1}( \bbcp^1\times  \bbcp^1,(k, l))$ which forgets the second marked point, and we consider the following short exact sequence over $ \bbcp^1\times  \bbcp^1$:
\begin{align*}
0\to & \cO_{ \bbcp^1}(-2) \times \cO_{ \bbcp^1}(-2) \to \\
& \to (\cO_{ \bbcp^1}(-1)\oplus \cO_{ \bbcp^1}(-1)) \times (\cO_{ \bbcp^1}(-1)\oplus \cO_{ \bbcp^1}(-1))\to \cO_{ \bbcp^1} \times \cO_{ \bbcp^1}\to 0 \,.
\end{align*}
Given $[u: C \to  \bbcp^1\times  \bbcp^1] \in \Mbar_{0,1}( \bbcp^1\times  \bbcp^1,(k, l))$, this exact sequence pulls-back to
\begin{multline*}
0\to u^*(\cO_{ \bbcp^1}(-2) \times \cO_{ \bbcp^1}(-2) )\to \\
\to u^*((\cO_{ \bbcp^1}(-1)\oplus \cO_{ \bbcp^1}(-1)) \times (\cO_{ \bbcp^1}(-1)\oplus \cO_{ \bbcp^1}(-1)))
\to u^*( \cO_{ \bbcp^1} \times \cO_{ \bbcp^1})\to 0 \,.
\end{multline*}
In a similar way to the previous case we define bundles 
\begin{align*}
E_{k,l}&:= p_*\eva^*(\cO_{ \bbcp^1}(-2) \times \cO_{ \bbcp^1}(-2)) \quad \mbox{and} \\
V_{k,l}&:= p_*\eva^*((\cO_{ \bbcp^1}(-1)\oplus \cO_{ \bbcp^1}(-1)) \times (\cO_{ \bbcp^1}(-1)\oplus \cO_{ \bbcp^1}(-1)))
\end{align*}
over $\Mbar_{0,1}( \bbcp^1\times  \bbcp^1,(k, l))$. Now $E_{k,l}$ and $V_{k,l}$ have rank $2k+2l -2$ and $2k+2l -4$, respectively. 
In this case we have the following short exact sequence of bundles
$$
0\to \cO_{\cM} \to E_{k,l} \to V_{k, l} \to 0 \,,
$$
where, again, $\cO_{\cM}$ is the trivial bundle. So relations \eqref{chern} become in this case
\begin{equation*}
e(E_{k,l})=c_{2k+2l-2}(E_{k, l})=0,\quad
e(V_{k, l}) = c_{2k+2l-4}(V_{k, l})= c_{2k+2l-4}(E_{k,l}) \,.
\end{equation*}

We consider the same $\C^*$--action as above, with fixed points $0=[0:1]$ and $\infty=[1:0]$, and its lift to $M_\Lambda$ acting trivially on $\pi_{-1}(0)$.
It induces a $C^*$--action on $\Mbar_{0,0}(M_\Lambda,kA_n +A_1)$. Analogously  to the first case we have
$$
\Mbar_{0,0}(M_\Lambda,kA_n+lA_1)^{\C^*} = F_0 \cup F_\infty
$$
where 
$F_0$ and $F_\infty$ can now be identified with $\Mbar_{0,1}( \bbcp^1\times  \bbcp^1, (k,l))$.

Again, by virtual localization \cite{GP}, 
$$
\int_{[\Mbar_{0,0}(M_\Lambda,kA_n+lA_1)]^\vir}1
=\int_{[F_0]^\vir}\frac{1}{e_{\C^*}(N^\vir_{F_0})}
+\int_{[F_\infty]^\vir }\frac{1}{e_{\C^*}(N^\vir_{F_\infty})} \;.
$$

However, in this case, since  $\dim \Mbar_{0,1}( \bbcp^1\times  \bbcp^1, (k,l))= 2k+2l$ and both Euler classes $e (E_{k,l})$ and $e(E_{k,l}\otimes L_{-u})$ have smaller degree than this dimension we conclude that both integrals 
$$\int_{[F_0]^\vir}\frac{1}{e_{\C^*}(N^\vir_{F_0})}=  \int_{\Mbar_{0,1}( \bbcp^1\times  \bbcp^1 ,(k,l))} \frac{e(E_k)}{e(L_u)} \quad \quad 
\mbox{and}$$
$$\int_{[F_\infty]^\vir }\frac{1}{e_{\C^*}(N^\vir_{F_\infty})}= \int_{\Mbar_{0,1}( \bbcp^1\times  \bbcp^1,(k,l))}
\frac{e(E_k\otimes L_{-u})}{u}$$ vanish, 
unless $k=l$ when we can reduce the calculation of the Gromov--Witten invariant to the  first case by considering curves in class $k(A_n+ A_1)$.
\end{proof}

\subsection{Proof of Theorem \ref{GWinvariants}}
\label{sec:proof-theor-GWinv}

We are now ready to prove Theorem \ref{GWinvariants} which will conclude the proof of Theorem \ref{theo:contributions}.

We use an induction argument. First notice that using the results from Spielberg recalled in Section \ref{SpielbergFormula}, 
we can easily compute the value of the three Gromov--Witten invariants of Theorem \ref{GWinvariants} for the base cases  $k=0,1$ and $l=0,1$ (see Lemma \ref{lemm:GW_Z1Zb-ZnZb} for the computation of some of these invariants). Now we assume they hold for  all values $i,j$ such that $i \leq k-1$ and  $j \leq l-1$  and we will prove they also hold for $i=k$ and $j=l$.
Because $[M] \cdot [\sigma]=1$ for any section class $\sigma$, the divisor axiom for Gromov--Witten invariants (see Proposition \ref{prop:axiom-for-GW-inv}) implies that the 1--point invariant $\GW_{A_\max+B,1}^{M_\Lambda} (c)$ equals the 3--point invariant 
$\GW_{A_\max+B,3}^{M_\Lambda} ([M],[M],c)$. It follows easily, from the fan description of the manifold $M_\Lambda$ in Section \ref{sec:example}, that  $\PD([M])=Z_b$. Therefore we need  to compute the Gromov--Witten  invariants 
$$\GW_{A_\max+B,3}^{M_\Lambda} (Z_b,Z_b,Z)$$ 
with $Z \in H^4(M;\bb Z)$ since the degrees satisfy the equation $2\deg Z_b+ \deg Z = 2N + 2c_1(B_{k,l})+ 2m-6$ where $\dim M_\Lambda=2N=6$, $\deg Z_b=2$, $c_1(B_{k,l})=1$ and $m=3$ is the number of marked points. 

The main idea of the proof is to compute well-chosen Gromov--Witten invariants via the splitting axiom along two different partitions and then deduce relations from the two resulting expressions. Namely, we start with
$\GW_{B_{k,l},4}^{M_\Lambda} (Z_1,Z_b,Z_b,Z_1; \pt)$, from which we will deduce:
\begin{lemm}\label{lemma:GWcomp1}
  $\GW_{B_{k,l}}^{M_\Lambda} (Z_1Z_b)$ and $\GW_{B_{k,l}}^{M_\Lambda} (Z_1Z_2)$ satisfy the following equations:
  \begin{align}
    (k-2l)\,\GW_{B_{k,l}}^{M_\Lambda} (Z_1Z_b)+\,\GW_{B_{k,l}}^{M_\Lambda} (Z_1Z_2)&= k-2l \,, &\quad \mbox{if $k \geq l$,}   \label{final1casea} \\
    (k-2l)\,\GW_{B_{k,l}}^{M_\Lambda} (Z_1Z_b)+\,\GW_{B_{k,l}}^{M_\Lambda} (Z_1Z_2)&= 2k-4l+2 \,, &\quad \mbox{if $k < l$.}  \label{final1caseb}
  \end{align}
\end{lemm}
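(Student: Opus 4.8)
The plan is to exploit the permutation symmetry of the genus-0 four-point invariant $\GW_{B_{k,l},4}^{M_\Lambda}(Z_1,Z_b,Z_b,Z_1;\pt)$: since a generic point of $\overline{\cM}_{0,4}\cong\bbcp^1$ is homologous to each of its three boundary points, the splitting axiom of Proposition \ref{prop:axiom-for-GW-inv} computes this number along any of the three pairings of the four marked points, so equating two such expansions produces a relation among three-point invariants. First I would expand it along the pairing $(Z_1,Z_b)\,|\,(Z_b,Z_1)$, and then along the pairing $(Z_1,Z_1)\,|\,(Z_b,Z_b)$.

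The reduction rests on a handful of elementary facts from Section \ref{sec:toric-topol-data}: $Z_b^2=0$; the pairing $\langle Z_b,A\rangle$ vanishes on every fibre class $A=k'A_n+l'A_1$ while $\langle Z_b,B_{k',l'}\rangle=1$ on the section classes; $\langle Z_1,B_{k,l}\rangle = k-2l$; and $Z_1^2=-2Z_1Z_2$. Combined with the NEF hypothesis, which forces $c_1\geq 0$ on each summand of a decomposition $B_{k,l}=A'+A''$ and hence $c_1(A')+c_1(A'')=1$, these facts prune both sums. In the first pairing only the decompositions with $A'=0$ or $A''=0$ survive: every nonzero fibre summand dies because stripping the divisors $Z_1,Z_b$ via the divisor axiom produces the factor $\langle Z_b,\cdot\rangle=0$, while for the zero summand one uses the Zero axiom. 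Contracting the resulting classical triple products against $G^{-1}$ selects the degree-four class $Z_1Z_b$, and each of the two surviving terms reduces, after stripping $Z_1$ and $Z_b$, to $(k-2l)\,\GW_{B_{k,l}}^{M_\Lambda}(Z_1Z_b)$; thus this pairing equals $2(k-2l)\,\GW_{B_{k,l}}^{M_\Lambda}(Z_1Z_b)$. In the second pairing the two $Z_b$ inputs force the factor carrying them to be a section class (the fibre and zero classes die by $\langle Z_b,\cdot\rangle=0$ and $Z_b^2=0$ respectively); the leading piece $A'=0$ gives $\GW_{0,3}(Z_1,Z_1,e_\nu)=-2g_{\nu,12}$, which after contraction and stripping yields $-2\,\GW_{B_{k,l}}^{M_\Lambda}(Z_1Z_2)$.

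Equating the two expansions and dividing by $2$ then gives
\begin{align*}
(k-2l)\,\GW_{B_{k,l}}^{M_\Lambda}(Z_1Z_b)+\GW_{B_{k,l}}^{M_\Lambda}(Z_1Z_2) = \tfrac12\,R_{k,l},
\end{align*}
where $R_{k,l}$ gathers the remaining contributions to the second pairing, coming from the decompositions $B_{k,l}=A'+A''$ in which $A'=k'A_n+l'A_1$ is a \emph{nonzero} fibre class and $A''=B_{k-k',\,l-l'}$ is the corresponding section class. Comparison with the target identity predicts $R_{k,l}=2(k-2l)$ when $k\geq l$ and $R_{k,l}=2(2k-4l+2)$ when $k<l$, so the final task is to evaluate $R_{k,l}$ and confirm these values.

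Evaluating $R_{k,l}$ is where the real work lies. Applying the divisor axiom to $Z_1,Z_1,Z_b,Z_b$, each term reduces to $\GW^{M_\Lambda}(A')\,(k'-2l')^2$ times a contraction $\langle e_\nu,A'\rangle\,g^{\nu\mu}\,\GW_{B_{k-k',\,l-l'}}^{M_\Lambda}(e_\mu)$, in which the zero-point invariants $\GW^{M_\Lambda}(A')$ are supplied by Proposition \ref{nopointsinvariants}, the needed entries of $G^{-1}$ by Lemma \ref{lem:relationscoeffmatrix}, and the surviving one-point invariants by the inductive hypothesis. I expect the main obstacle to be precisely this bookkeeping: organising the double sum over $(k',l')$, using that $\GW^{M_\Lambda}(A')$ vanishes unless $A'$ is a positive multiple of $A_n$, of $A_1$, or of $A_n+A_1$, and checking that the resulting finite series collapses to the predicted values of $R_{k,l}$. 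The case distinction $k\geq l$ versus $k<l$ should enter only at this stage, inherited both from the inductive values of $\GW_{B_{k-k',\,l-l'}}^{M_\Lambda}(Z_1Z_2)$, $\GW_{B_{k-k',\,l-l'}}^{M_\Lambda}(Z_1Z_b)$, $\GW_{B_{k-k',\,l-l'}}^{M_\Lambda}(Z_2Z_b)$ and from the special behaviour of $\GW^{M_\Lambda}(A')$ along the diagonal $k'=l'$, with the base cases $k,l\in\{0,1\}$ furnished by Lemma \ref{lemm:GW_Z1Zb-ZnZb}.
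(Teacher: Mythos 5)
Your proposal follows the paper's own proof essentially verbatim: the paper computes $\GW_{B_{k,l},4}^{M_\Lambda}(Z_1,Z_b,Z_b,Z_1;\pt)$ by the splitting axiom along the partitions $\{1,2\}\,|\,\{3,4\}$ and $\{1,4\}\,|\,\{2,3\}$, obtains $2(k-2l)\,\GW_{B_{k,l}}^{M_\Lambda}(Z_1Z_b)$ from the first and $-2\,\GW_{B_{k,l}}^{M_\Lambda}(Z_1Z_2)$ plus the fibre-class correction sum from the second, and evaluates that sum exactly as you describe, via Proposition \ref{nopointsinvariants}, Lemma \ref{lem:relationscoeffmatrix}, and the inductive hypothesis (with the two remaining unknown entries of $G^{-1}$ pinned down by the base cases of Lemma \ref{lemm:GW_Z1Zb-ZnZb}). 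All your intermediate reductions and the predicted values of the correction term match the paper's computation.
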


\begin{proof}
  \textbf{Step 1. We use the partition $S_0=\{1,2\}, S_1=\{3,4\}$ of the index set $\{1,2,3,4\}$} and apply the splitting axiom so that we get:
\begin{align*}
  \GW&_{B_{k,l},4}^{M_\Lambda} (Z_1,Z_b,Z_b,Z_1; \pt)  
  =  \;\sum_{C_0+C_1=B_{k,l} }\GW_{C_0,3}^{M_\Lambda} (Z_1,Z_b, e_\nu)\, g^{\nu\mu}\, \GW_{C_1,3}^{M_\Lambda} (e_\mu, Z_b,Z_1) 
\end{align*}
where the sum runs over all $C_0$, $C_1$ such that
$$
\left\{ \begin{array}{l}
C_0=A_\max+k_0A_n+l_0A_1 \\
C_1=k_1A_n+l_1A_1 \end{array} \right. \quad  \mbox{or} \quad \left\{ \begin{array}{l}
C_0=k_0A_n+l_0A_1 \\
C_1=A_\max+k_1A_n+l_1A_1 \end{array} \right. $$
with $k_0+k_1=k$ and $l_0+l_1=l$. In order to ease the reading, we used in the equality above \textit{as well as in the rest of this proof}, the Einstein summation convention with respect to the basis of the cohomology (and thus forgot $\sum_{\nu,\mu}$ from the notation).

This leads us to 
\begin{align*}
  \GW_{B_{k,l},4}^{M_\Lambda} (Z_1,Z_b,&Z_b,Z_1; \pt)  
  = \;\GW_{A_\max,3}^{M_\Lambda} (Z_1,Z_b, e_\nu)\, g^{\nu\mu}\, \GW_{\akl,3}^{M_\Lambda} (e_\mu, Z_b,Z_1)  \\  
 &\quad+ \GW_{B_{k,l},3}^{M_\Lambda} (Z_1,Z_b, e_\nu)\, g^{\nu\mu}\, \GW_{0,3}^{M_\Lambda} (e_\mu, Z_b,Z_1)  \\
 &\quad+\sum_{1 \leq k_0+l_0 \leq k+l -1}\GW_{B_{k_0,l_0},3}^{M_\Lambda} (Z_1,Z_b, e_\nu)\, g^{\nu\mu}\, \GW_{k_1A_n+l_1A_1,3}^{M_\Lambda} (e_\mu, Z_b,Z_1)  \\
 &\quad+  \GW_{0,3}^{M_\Lambda} (Z_1,Z_b, e_\nu)\, g^{\nu\mu}\, \GW_{B_{k,l},3}^{M_\Lambda} (e_\mu, Z_b,Z_1)  \\  
 &\quad+ \GW_{\akl,3}^{M_\Lambda} (Z_1,Z_b, e_\nu)\, g^{\nu\mu}\, \GW_{A_\max,3}^{M_\Lambda} (e_\mu, Z_b,Z_1) \\
 &\quad +\sum_{1 \leq k_0+l_0 \leq k+l -1}\GW_{k_0A_n+l_0A_1,3}^{M_\Lambda} (Z_1,Z_b, e_\nu)\, g^{\nu\mu}\, \GW_{B_{k_1,l_1},3}^{M_\Lambda} (e_\mu, Z_b,Z_1) 
\end{align*}
Now, by using the divisor axiom (see Proposition \ref{prop:axiom-for-GW-inv}) together with the fact that $\int_{A_n}Z_b = \int_{A_1}Z_b=0$, we end up with:
\begin{align} \label{eq1}
   \GW_{B_{k,l},4}^{M_\Lambda} (Z_1,Z_b,&Z_b,Z_1; \pt) = 2\, \GW_{B_{k,l},3}^{M_\Lambda} (Z_1,Z_b, e_\nu)\, g^{\nu\mu}\, \GW_{0,3}^{M_\Lambda} (e_\mu, Z_b,Z_1) \,.
\end{align}
Moreover, $\int_{B_{k,l}}Z_1= Z_1(A_\max+kA_n+lA_1) =k -2l$,  $\int_{B_{k,l}}Z_b= 1$ and by the zero axiom (see Proposition \ref{prop:axiom-for-GW-inv}):
$$\GW_{0,3}^{M_\Lambda} (e_\mu, Z_b,Z_1)= \int_{M_\Lambda} e_\mu \cup Z_b \cup Z_1 = \left \{ \begin{array}{cl}
-2 & \mbox {if } e_\mu=Z_1, \\
1 & \mbox {if } e_\mu=Z_2, \\
0 & \mbox{otherwise}. \end{array} \right.$$ 
So one gets that \eqref{eq1} leads to
\begin{equation}\label{eq1final}
\GW_{B_{k,l},4}^{M_\Lambda} (Z_1,Z_b,Z_b,Z_1; \pt) = 2(k-2l) \sum_ {{\nu}\,:\,{|e_\nu|=4}}\GW_{B_{k,l},1}^{M_\Lambda} (e_\nu)\, (g^{\nu n}-2g^{\nu1}) \,.
\end{equation}
\begin{rema}\label{rema:compute-sums-GW-inv}
From the diagram of Figure \ref{fig:Diagram}, $\GW_{B_{k,l},1}^{M_\Lambda}(e_\nu) \neq 0$ only if  the class $e_\nu$ is Poincar\'e dual to one of the following homology classes: $A_{n-1}$, $A_n$, $A_1$, $A_2$, $A_\max$, or $ A_\max + A_1$, since the marked point should lie in one of the following cones: $\sigma_2$, $\sigma_3$, $\sigma_4$, $\sigma_7$, $\sigma_8$, or $\sigma_9$. Their Poincar\'e duals are the classes $Z_{n-1}Z_b$, $Z_nZ_b$, $Z_1Z_b$, $ Z_2Z_b$, $Z_1Z_n$, and $ Z_1Z_2$, respectively. Note that the only ones that belong to the basis of the cohomology are $Z_1Z_b$, $Z_2Z_b$, and $Z_1Z_2$. Therefore, at most three terms appear in the summation in Equation \eqref{eq1final} above and the coefficients can be computed thanks to Lemma \ref{lem:relationscoeffmatrix}.
\end{rema}
In the case of Equation \eqref{eq1final}, we end up with
\begin{equation}\label{eq1final2}
\GW_{B_{k,l},4}^{M_\Lambda} (Z_1,Z_b,Z_b,Z_1; \pt) = 2(k-2l) \GW_{B_{k,l},1}^{M_\Lambda} (Z_1Z_b) \,.
\end{equation}

  \textbf{Step 2. We use the partition $S_0=\{1,4\}, S_1=\{2,3\}$}.\\ The same Gromov--Witten invariant is given by the following expression
\begin{multline}\label{eq2}
    \GW _{B_{k,l},4}^{M_\Lambda} (Z_1,Z_b,Z_b,Z_1; \pt)  =  \GW_{0,3}^{M_\Lambda} (Z_1,Z_1, e_\nu)\, g^{\nu\mu}\, \GW_{B_{k,l},3}^{M_\Lambda} (e_\mu, Z_b,Z_b) \\
 +\sum_{1 \leq k_0+l_0 \leq k+l}\GW_{k_0A_n+l_0A_1,3}^{M_\Lambda} (Z_1,Z_1, e_\nu)\, g^{\nu\mu}\, \GW_{B_{k_1,l_1},3}^{M_\Lambda} (e_\mu, Z_b,Z_b).
\end{multline}
Since (by the zero axiom):
\begin{align*}
&\GW_{0,3}^{M_\Lambda} (Z_1,Z_1, e_\nu) = \int_{M_\Lambda} Z_1^2 \cup e_\nu = \left \{ \begin{array}{cl}
4 & \mbox{if } e_\nu=Z_1 \\
-2 & \mbox{if } e_\nu= Z_2 \mbox{ or } e_\nu= Z_b \\
0 & \mbox{otherwise} \end{array} \right. \,, \\
 &\mbox{and }\quad \int_{k_0A_n+l_0A_1} e_\nu = \left \{ \begin{array}{cl}
k_0-2l_0 & \mbox {if } e_\nu=Z_1, \\
l_0 & \mbox {if } e_\nu=Z_2, \\
0 & \mbox{otherwise} \end{array} \right.
\end{align*}
 it follows from the divisor axiom that \eqref{eq2} is equal to  
 \begin{multline}\label{eq2simplified}
     =  \sum_{\mu \,:\, |e_\mu|=4}(4g^{1\mu}-2g^{2\mu}-2g^{b\mu})\GW_{B_{k,l},1}^{M_\Lambda} (e_\mu)  \\
 +\sum_{\mu} \sum_{1 \leq k_0+l_0 \leq k+l}(k_0-2l_0)^2\GW(k_0A_n+l_0A_1) \, ((k_0-2l_0)g^{1\mu} 
+l_0\,g^{2\mu})\, \GW_{B_{k_1,l_1},1}^{M_\Lambda} (e_\mu) 
\end{multline}
where $\GW(k_0A_n+l_0A_1)$ denotes the 0--point invariant in class $k_0A_n+l_0A_1$. These were computed in Proposition \ref{nopointsinvariants}. In order to simplify the expression, we will denote them by $\GW_0$. We will also omit the index 1 indicating the number of marked points for the various 1--point Gromov--Witten invariants appearing in what remains of the proof.

In view of Remark \ref{rema:compute-sums-GW-inv} above and Lemma \ref{lem:relationscoeffmatrix}, equation \eqref{eq2simplified} actually reads
\begin{equation*}
\begin{split}
= & -2\GW_{B_{k,l}}^{M_\Lambda} (Z_1Z_2) +   \sum_{1 \leq k_0+l_0 \leq k+l}(k_0-2l_0)^2\GW_0 \left[(k_0\,g^{1,1b}+l_0)\,\GW_{B_{k_1,l_1}}^{M_\Lambda} (Z_1Z_b)  \right. \\
& \left. + k_0 \,g^{1,2b} \GW_{B_{k_1,l_1}}^{M_\Lambda} (Z_2Z_b) + k_0 \,g^{1,12} \GW_{B_{k_1,l_1}}^{M_\Lambda} (Z_1Z_2))  \right].
\end{split} 
\end{equation*}
Then, using Proposition \ref{nopointsinvariants},  we separate the summation in three summations: $k_0=0$, $l_0=0$, and $k_0=l_0$: 
\begin{equation}\label{eq2final}
\begin{split}
& =  -2\GW_{B_{k,l}}^{M_\Lambda} (Z_1Z_2)  + \sum_{l_0=1}^{l}4l_0^3\left(-\frac{1}{l_0^3} \right) \,\GW_{B_{k,l_1}}^{M_\Lambda} (Z_1Z_b) \\
 &  + \sum_{k_0=1}^{k}k_0^3\left(-\frac{1}{k_0^3} \right)\,  \left[g^{1,1b} \,\GW_{B_{k_1,l}}^{M_\Lambda} (Z_1Z_b)  + g^{1,2b} \GW_{B_{k_1,l}}^{M_\Lambda} (Z_2Z_b) + g^{1,12} \GW_{B_{k_1,l}}^{M_\Lambda} (Z_1Z_2))  \right] \\
 &  - \sum_{k_0=1}^{\rm{min}(k,l)}k_0^3\left(\frac{1}{k_0^3} \right)\,\left[(g^{1,1b}+1) \,\GW_{B_{k_1,l_1}}^{M_\Lambda} (Z_1Z_b)  + g^{1,2b} \GW_{B_{k_1,l_1}}^{M_\Lambda} (Z_2Z_b) + g^{1,12} \GW_{B_{k_1,l_1}}^{M_\Lambda} (Z_1Z_2))  \right]
\end{split}
\end{equation}

Applying the induction hypotheses and Lemma \ref{lem:relationscoeffmatrix} we can simplify even further this expression. However we need to consider  two different cases: 
\begin{enumerate}[(a)]
\item If $k \geq l $ then \eqref{eq2final} is equal to 
\begin{equation}\label{eq2finalcasea}
\begin{split}
=  & -2\GW_{B_{k,l}}^{M_\Lambda} (Z_1Z_2)  - 4l + \sum_{k_1=0}^{l-1}(-2g^{1,1b}  + g^{1,2b}  -2 g^{1,12}) -  \sum_{k_1=l}^{k-1} g^{1,1b} -\sum_{k_0=1}^{l} (g^{1,1b}+1)\\
= & -2\GW_{B_{k,l}}^{M_\Lambda} (Z_1Z_2) +l \,(g^{1,2b}  -2 g^{1,12}) -(k+2l) \, g^{1,1b} -5l
\end{split}
\end{equation}
\item If $k <l$ then \eqref{eq2final} is equal to 
\begin{equation}\label{eq2finalcaseb}
\begin{split}
& =-2\GW_{B_{k,l}}^{M_\Lambda} (Z_1Z_2)  -4\sum_{l_1=0}^{k} 1  -4\sum_{l_1=k+1}^{l-1} 2 - \sum_{k_0=1}^k (2g^{1,1b}  - g^{1,2b}  +2 g^{1,12})  \\
&  \quad \quad -  \sum_{k_0=1}^{k} (g^{1,1b}  - g^{1,2b}  +2 g^{1,12}) \\
&= -2\GW_{B_{k,l}}^{M_\Lambda} (Z_1Z_2) -4(k+1) -8(l-1-k) +6k -2k(2 g^{1,12} -g^{1,2b}) 
\end{split}
\end{equation}
\end{enumerate}

\textbf{Step 3. We use the fact that the results of Steps 1 and 2 coincide}, i.e. when $k \geq l$, \eqref{eq1final2}=\eqref{eq2finalcasea} while when $k < l$,  \eqref{eq1final2}=\eqref{eq2finalcaseb}.

First we consider the case $k \geq l$, \eqref{eq1final2}=\eqref{eq2finalcasea} leads to
\begin{equation*}
2(k-2l)\,\GW_{B_{k,l}}^{M_\Lambda} (Z_1Z_b)+2\,\GW_{B_{k,l}}^{M_\Lambda} (Z_1Z_2)= l \,(g^{1,2b}  -2 g^{1,12}) -(k+2l) \, g^{1,1b} -5l.
\end{equation*}
In particular, when $k=1, l=0$ and $k=1, l=1$, it follows from the base cases (Lemma \ref{lemm:GW_Z1Zb-ZnZb}) that the matrix elements satisfy:
\begin{align} \label{coeffmatrixI}
 g^{1,1b}=-2  \qquad \mbox{and} \qquad 2 g^{1,12} -g^{1,2b} =3,
\end{align}
respectively. Getting back to the general case, we finally deduce:

\begin{enumerate}[(a)] 
\item For $k \geq l $, \eqref{eq1final2}=\eqref{eq2finalcasea} together with \eqref{coeffmatrixI} give
\begin{equation*}
(k-2l)\,\GW_{B_{k,l}}^{M_\Lambda} (Z_1Z_b)+\,\GW_{B_{k,l}}^{M_\Lambda} (Z_1Z_2)= k-2l \,,
\end{equation*}
\item and for $k < l$, \eqref{eq1final2}=\eqref{eq2finalcaseb} together with \eqref{coeffmatrixI} give
\begin{equation*}
(k-2l)\,\GW_{B_{k,l}}^{M_\Lambda} (Z_1Z_b)+\,\GW_{B_{k,l}}^{M_\Lambda} (Z_1Z_2)= 2k-4l+2 \,.
\end{equation*}
\end{enumerate}
This ends the proof of the lemma.
\end{proof}


We now proceed along the same lines but for two other Gromov--Witten invariants, namely,  $$\GW_{B_{k,l},4}^{M_\Lambda} (Z_1,Z_b,Z_b,Z_2; \pt) \quad \mbox { and } \quad \GW_{B_{k,l},4}^{M_\Lambda} (Z_2,Z_b,Z_b,Z_2; \pt) \,.$$
Since the method is exactly the same, we leave the computation to the interested reader and we simply give the four resulting equations. 

\begin{lemm}\label{lemma:GWcomp2}
  From $\GW_{B_{k,l},4}^{M_\Lambda} (Z_1,Z_b,Z_b,Z_2; \pt)$, we deduce
  \begin{align}
 l\,\GW_{B_{k,l}}^{M_\Lambda} (Z_1Z_b)+(k-2l)\GW_{B_{k,l}}^{M_\Lambda} (Z_2Z_b)-\GW_{B_{k,l}}^{M_\Lambda} (Z_1Z_2)&=l \,, &\quad \mbox{if $k \geq l$,}   \label{final2casea} \\
 l\,\GW_{B_{k,l}}^{M_\Lambda} (Z_1Z_b)+(k-2l)\GW_{B_{k,l}}^{M_\Lambda} (Z_2Z_b)-\GW_{B_{k,l}}^{M_\Lambda} (Z_1Z_2)&=4l-k-2 \,, &\quad \mbox{if $k < l$.}  \label{final2caseb}
  \end{align}
\end{lemm}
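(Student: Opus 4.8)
The plan is to mirror exactly the three-step argument used for Lemma \ref{lemma:GWcomp1}, computing the four-point invariant $\GW_{B_{k,l},4}^{M_\Lambda}(Z_1,Z_b,Z_b,Z_2;\pt)$ via the splitting axiom along two different partitions of $\{1,2,3,4\}$ and then equating the two results. The one structural change is that the two ``outer'' insertions are now $Z_1$ and $Z_2$ rather than two copies of $Z_1$, so the symmetry that produced the factor $2$ in \eqref{eq1final2} is lost and is replaced by two genuinely distinct surviving terms.

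First I would use the partition $S_0=\{1,2\},\,S_1=\{3,4\}$, grouping $(Z_1,Z_b)$ against $(Z_b,Z_2)$. As in Step 1 of Lemma \ref{lemma:GWcomp1}, the divisor axiom applied to the two $Z_b$ insertions (recall $\int_{A_n}Z_b=\int_{A_1}Z_b=0$ while $\int_{B_{k,l}}Z_b=1$) collapses the sum over $C_0+C_1=B_{k,l}$ to the two terms in which one factor carries the whole section class and the other is the zero class. Evaluating the zero-class factors by the zero axiom produces $\GW_{0,3}^{M_\Lambda}(e_\mu,Z_b,Z_2)=\int_{M_\Lambda}e_\mu\cup Z_b\cup Z_2$ and $\GW_{0,3}^{M_\Lambda}(Z_1,Z_b,e_\nu)=\int_{M_\Lambda}Z_1\cup Z_b\cup e_\nu$, which are precisely the $Z_2Z_b$ and $Z_1Z_b$ columns of the table \eqref{eq:table-part-of-B}. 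Contracting these with $G^{-1}$ and using $G^{-1}G=\id$ (equivalently, the third and second relations of each block in Lemma \ref{lem:relationscoeffmatrix}) collapses each contraction to a single Kronecker symbol. Using $\int_{B_{k,l}}Z_1=k-2l$ and $\int_{B_{k,l}}Z_2=l$, this step should yield the clean identity $\GW_{B_{k,l},4}^{M_\Lambda}(Z_1,Z_b,Z_b,Z_2;\pt)=(k-2l)\,\GW_{B_{k,l}}^{M_\Lambda}(Z_2Z_b)+l\,\GW_{B_{k,l}}^{M_\Lambda}(Z_1Z_b)$, the analogue of \eqref{eq1final2}; note this already produces the left-hand side of \eqref{final2casea}--\eqref{final2caseb} up to the missing $\GW_{B_{k,l}}^{M_\Lambda}(Z_1Z_2)$ term.

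Next I would use the partition $S_0=\{1,4\},\,S_1=\{2,3\}$, grouping $(Z_1,Z_2)$ against $(Z_b,Z_b)$. Since $Z_b^2=0$, only the terms in which the $(Z_b,Z_b)$ factor carries the full section class survive, the complementary factor being a fibre class $k_0A_n+l_0A_1$. The $C_0=0$ contribution is handled by the zero axiom, $\GW_{0,3}^{M_\Lambda}(Z_1,Z_2,e_\nu)=\int_{M_\Lambda}Z_1\cup Z_2\cup e_\nu$, whose contraction with $G^{-1}$ picks out exactly $\GW_{B_{k,l}}^{M_\Lambda}(Z_1Z_2)$ with coefficient $+1$, in contrast to the coefficient $-2$ appearing for the $(Z_1,Z_1)$ insertion of Lemma \ref{lemma:GWcomp1}. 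The fibre contributions involve the $0$-point invariants $\GW(k_0A_n+l_0A_1)$ of Proposition \ref{nopointsinvariants}, each weighted by $(\int_{C_0}Z_1)(\int_{C_0}Z_2)=(k_0-2l_0)\,l_0$; crucially this weight vanishes on the range $l_0=0$, so only the ranges $k_0=0$ and $k_0=l_0$ survive, and I would split the sum accordingly, as in Step 2 of Lemma \ref{lemma:GWcomp1}. Applying the induction hypothesis to the resulting lower-index invariants, together with Remark \ref{rema:compute-sums-GW-inv} and Lemma \ref{lem:relationscoeffmatrix} to evaluate the remaining coefficients of $G^{-1}$, reduces this second expression to a closed form, treating $k\geq l$ and $k<l$ separately.

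Finally I would equate the two outputs: the combination $l\,\GW_{B_{k,l}}^{M_\Lambda}(Z_1Z_b)+(k-2l)\,\GW_{B_{k,l}}^{M_\Lambda}(Z_2Z_b)-\GW_{B_{k,l}}^{M_\Lambda}(Z_1Z_2)$ gathers on one side, while the inductive sum on the other collapses, after inserting the coefficient relations \eqref{coeffmatrixI} already pinned down from the base cases of Lemma \ref{lemm:GW_Z1Zb-ZnZb}, to $l$ when $k\geq l$ and to $4l-k-2$ when $k<l$; this is exactly \eqref{final2casea} and \eqref{final2caseb}. I expect the main obstacle to be purely organizational bookkeeping: because $Z_2\cup Z_b$ pairs nontrivially with $Z_1,Z_2,Z_3$ (rather than with only two classes, as $Z_1\cup Z_b$ did), and because the fibre weight $(k_0-2l_0)l_0$ annihilates a different range of the $0$-point invariants than the weight $(k_0-2l_0)^2$ did in Lemma \ref{lemma:GWcomp1}, one must carry a few extra matrix entries $g_{\mu,2b}$ and a different splitting of the inductive sum. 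All of this is controlled by Lemma \ref{lem:relationscoeffmatrix} together with the support statement of Remark \ref{rema:compute-sums-GW-inv}, which guarantees that only $\GW_{B_{k,l}}^{M_\Lambda}(Z_1Z_b)$, $\GW_{B_{k,l}}^{M_\Lambda}(Z_2Z_b)$, and $\GW_{B_{k,l}}^{M_\Lambda}(Z_1Z_2)$ ever appear, so no genuinely new difficulty arises beyond careful accounting.
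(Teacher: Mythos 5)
Your proposal is correct and follows exactly the route the paper intends: the paper omits the proof of this lemma, stating only that ``the method is exactly the same'' as for Lemma \ref{lemma:GWcomp1}, and your computation carries that method out faithfully, correctly identifying the structural changes (the lost symmetry factor of $2$ in the first partition, the coefficient $+1$ rather than $-2$ for $\GW_{B_{k,l}}^{M_\Lambda}(Z_1Z_2)$ from the zero-class term, and the fibre weight $(k_0-2l_0)l_0$ annihilating the $l_0=0$ range of Proposition \ref{nopointsinvariants}). The outputs of both partitions check against the base cases of Lemma \ref{lemm:GW_Z1Zb-ZnZb}, so nothing further is needed.
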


\begin{lemm}\label{lemma:GWcomp3}
  From $\GW_{B_{k,l},4}^{M_\Lambda} (Z_2,Z_b,Z_b,Z_2; \pt)$ we deduce:
  \begin{align}
(2l+2d_2-1)\,\GW_{B_{k,l}}^{M_\Lambda} (Z_2Z_b)+d_2\GW_{B_{k,l}}^{M_\Lambda} (Z_1Z_2)&=0 \,, &\quad \mbox{if $k \geq l$,}   \label{final3casea} \\
(2l+2d_2-1)\,\GW_{B_{k,l}}^{M_\Lambda} (Z_2Z_b)+d_2\GW_{B_{k,l}}^{M_\Lambda} (Z_1Z_2)&=1-2l \,, &\quad \mbox{if $k < l$,}  \label{final3caseb}
  \end{align}
where $d_2$ comes from the matrix $G$, see Table \eqref{eq:table-part-of-B}.
\end{lemm}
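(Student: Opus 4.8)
The plan is to follow, verbatim, the three-step strategy used to prove Lemma \ref{lemma:GWcomp1}, now applied to the four-point invariant $\GW_{B_{k,l},4}^{M_\Lambda}(Z_2,Z_b,Z_b,Z_2;\pt)$: compute it via the splitting axiom along the two partitions $S_0=\{1,2\},S_1=\{3,4\}$ and $S_0=\{1,4\},S_1=\{2,3\}$, and equate the two outcomes. The induction is best phrased as an induction on $k+l$: every invariant $\GW_{B_{i,j}}$ that appears will have $i+j<k+l$, hence is already known, and I will freely use the base cases of Lemma \ref{lemm:GW_Z1Zb-ZnZb}, the coefficient identities of Lemma \ref{lem:relationscoeffmatrix}, and the values $g^{1,1b}=-2$, $2g^{1,12}-g^{1,2b}=3$ of \eqref{coeffmatrixI}.

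For Step 1, along $S_0=\{1,2\},S_1=\{3,4\}$, the two external divisors $Z_b$ kill every fibre-class factor (since $\int_{A_n}Z_b=\int_{A_1}Z_b=0$) and the zero axiom disposes of the remaining $Z_b^2$ pieces, so exactly as in Lemma \ref{lemma:GWcomp1} only the two splittings $(B_{k,l},0)$ and $(0,B_{k,l})$ survive, producing a factor $2$ by symmetry. Reducing $\GW_{B_{k,l},3}^{M_\Lambda}(Z_2,Z_b,e_\nu)$ by the divisor axiom uses $\int_{B_{k,l}}Z_2=l$ and $\int_{B_{k,l}}Z_b=1$, while the other factor is $\GW_{0,3}^{M_\Lambda}(e_\mu,Z_b,Z_2)=\langle e_\mu,A_2\rangle$ because $Z_2Z_b=\PD(A_2)$ with $A_2=\lambda_1-d_2\lambda_2+\lambda_3$. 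Contracting with $g^{\nu\mu}$ then produces the combination $g^{\nu1}-d_2g^{\nu2}+g^{\nu3}$, which by Lemma \ref{lem:relationscoeffmatrix} equals $0,1,0$ for $\nu$ dual to $Z_1Z_b,Z_2Z_b,Z_1Z_2$. I therefore expect Step 1 to collapse to
\[ \GW_{B_{k,l},4}^{M_\Lambda}(Z_2,Z_b,Z_b,Z_2;\pt)=2l\,\GW_{B_{k,l}}^{M_\Lambda}(Z_2Z_b). \]

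Step 2, along $S_0=\{1,4\},S_1=\{2,3\}$, is where the genuinely new features enter. The side carrying $Z_b,Z_b$ forces its class to be a section class, so the sum runs over $C_0=k_0A_n+l_0A_1$ (possibly zero) and $C_1=B_{k_1,l_1}$. The decisive simplification compared with Lemma \ref{lemma:GWcomp1} is that the external insertions are now $Z_2,Z_2$, whence the fibre factor carries $(\int_{C_0}Z_2)^2=l_0^2$; this \emph{annihilates the whole $l_0=0$ summation}, leaving only the $k_0=0$ and diagonal $k_0=l_0$ summations from Proposition \ref{nopointsinvariants}. The constant piece $C_0=0$ brings in $\GW_{0,3}^{M_\Lambda}(Z_2,Z_2,e_\nu)=\int_{M_\Lambda}Z_2^2\cup e_\nu$, which I will evaluate through the relation $Z_2^2=-d_2Z_1Z_2+(1-2d_2)Z_2Z_b$ of Section \ref{sec:toric-topol-data}; contracting with $g^{\nu\mu}$ and simplifying by Lemma \ref{lem:relationscoeffmatrix} should yield $-d_2\,\GW_{B_{k,l}}^{M_\Lambda}(Z_1Z_2)+(1-2d_2)\,\GW_{B_{k,l}}^{M_\Lambda}(Z_2Z_b)$, which is the origin of the $d_2$-dependent coefficients in the statement. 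In the two surviving fibre summations the powers of $k_0,l_0$ cancel against Proposition \ref{nopointsinvariants} exactly as before, and Lemma \ref{lem:relationscoeffmatrix} reduces each bracket to $\pm1$ times a single invariant (the $k_0=0$ sum to $-\GW_{B_{k,l_1}}^{M_\Lambda}(Z_1Z_b)$, the diagonal sum to $\GW_{B_{k-s,l-s}}^{M_\Lambda}(Z_1Z_b)$ up to the same coefficients met in Lemma \ref{lemma:GWcomp1}).

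Finally, Step 3 equates the two expressions. The left-hand side rearranges precisely into $(2l+2d_2-1)\GW_{B_{k,l}}^{M_\Lambda}(Z_2Z_b)+d_2\GW_{B_{k,l}}^{M_\Lambda}(Z_1Z_2)$, so it remains to show the surviving sums cancel to the stated constants via the induction hypothesis. When $k\geq l$ every pair $(k_1,l_1)$ occurring has $k_1\geq l_1$, so $\GW(Z_1Z_2)=\GW(Z_2Z_b)=0$ and $\GW(Z_1Z_b)=1$; the $k_0=0$ sum contributes $-l$ and the diagonal sum $+l$, giving $0$, i.e. \eqref{final3casea}. When $k<l$ the pairs satisfy $k_1<l_1$, so $\GW(Z_1Z_b)=\GW(Z_1Z_2)=2$ and $\GW(Z_2Z_b)=-1$; summing and invoking $2g^{1,12}-g^{1,2b}=3$ the two sums collapse to $1-2l$, i.e. \eqref{final3caseb}. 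The main obstacle is entirely the bookkeeping of Step 2: organising the sum over splittings, tracking which degree-$4$ classes $e_\mu$ survive the dimension count of Remark \ref{rema:compute-sums-GW-inv}, and verifying the several $d_2$-laden cancellations from Lemma \ref{lem:relationscoeffmatrix}. Nothing here is conceptually new beyond Lemma \ref{lemma:GWcomp1}, but it is the step where a sign or coefficient slip is easiest, so I would record the intermediate identities for $\GW_{0,3}^{M_\Lambda}(Z_2,Z_2,e_\nu)$ and for each coefficient combination explicitly before combining.
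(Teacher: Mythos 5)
Your proposal follows exactly the route the paper intends: the paper omits the proof of this lemma, saying only that the method is identical to that of Lemma \ref{lemma:GWcomp1}, and your two-partition splitting computation is precisely that method. The key intermediate quantities you identify all check out — Step 1 collapsing to $2l\,\GW_{B_{k,l}}^{M_\Lambda}(Z_2Z_b)$ via the combination $g^{\nu 1}-d_2 g^{\nu 2}+g^{\nu 3}$ (which Lemma \ref{lem:relationscoeffmatrix} evaluates to $0,1,0$), the relation $Z_2^2=-d_2Z_1Z_2+(1-2d_2)Z_2Z_b$ producing the $d_2$-laden coefficients, the factor $l_0^2$ killing the $l_0=0$ summation, and the two surviving sums contributing $-l+l=0$ when $k\geq l$ and $(k+1-2l)-k=1-2l$ when $k<l$ after invoking $2g^{1,12}-g^{1,2b}=3$.
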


In order to conclude the proof of Theorem \ref{GWinvariants}, we consider two linear systems: 
\begin{itemize}
\item one given by the equations \eqref{final1casea}, \eqref{final2casea}, \eqref{final3casea}, corresponding to the case $k \geq l$ of Lemmas \ref{lemma:GWcomp1}, \ref{lemma:GWcomp2}, and \ref{lemma:GWcomp3} above,
\item the other given by the equations \eqref{final1caseb}, \eqref{final2caseb}, \eqref{final3caseb} corresponding to the case $k < l$.
\end{itemize}
The unknowns of these linear systems are the Gromov--Witten invariants we are looking for, namely, $\GW_{B_{k,l}}^{M_\Lambda} (Z_1Z_b)$, $\GW_{B_{k,l}}^{M_\Lambda} (Z_1Z_2)$, and $\GW_{B_{k,l}}^{M_\Lambda} (Z_2Z_b)$. The unique solutions of these systems give us the desired result. 

%
%

\section{Applications and explicit examples}
\label{sec:appl-expl-exampl}

In this section we show some applications of our results and illustrate their relevance with some particular examples. More precisely, in Section \ref{sec:land-ginzb-potent} we show how to obtain an expression for the Landau--Ginzburg superpotential from the moment polytope
of a NEF toric 4--manifold. In Section \ref{sec:nef-exples-45pt-bu} we compute the Seidel elements, the quantum homology ring and the Landau--Ginzburg superpotential for two examples of NEF toric surfaces, namely $ \bbcp^2$ blown--up at 4 or 5 points. Finally, in Section 
\ref{sec:non-nef-examples} we show how we can use  the Fano and NEF computations to obtain explicit expressions of Seidel elements for some particular non-NEF manifolds, namely the Hirzebruch surfaces $\F_{2k}$ or $\F_{2k-1}$ with $k \geq 2$. As an example, we compute them explicitly for $\F_4$.

\subsection{The Landau--Ginzburg potential} 
\label{sec:land-ginzb-potent}

In this section we follow the works of McDuff--Tolman \cite{McDuffTolman06} and Ostrover--Tyomkin \cite{OstroverTyomkin09} which were themselves developments of original ideas due to Batyrev \cite{Batyrev93} and Givental \cite{Giv1,Giv2}. In particular, we will also use quantum cohomology. The definition is similar to quantum homology in Section \ref{sec:quantum-homology}, except that the coefficient ring is $\check{\Pi}:= \check{\Pi}^{\mathrm{univ}}[q,q^{-1}]$, with
$$ \check{\Pi}^{\mathrm{univ}}:= \left\{ \sum_{\kappa \in \R} r_\kappa t^\kappa \,\big|\, r_\kappa \in \Q, \ \#\{ \kappa < c\mid r_\kappa \neq 0\} < \infty, \forall c \in \R \right \}$$
(compare with \eqref{eq:pi-univ}) and that the product on $\QH^*(M; \omega) =  H^*(M; \Q) \otimes_\Q \check{\Pi}$ is Poincar\'e dual to the intersection product and is called the quantum cup product.

Let us recall some notation. Consider a torus $T$ with Lie algebra $\mathfrak t$ and lattice $\mathfrak t_\Z$. Let $(M, \omega)$ be a smooth toric $2m$--manifold with moment map $\Phi: M \to \mathfrak t^*$ and with moment polytope $P$. Let $D_1, \hdots, D_n$ be the facets of $P$, inducing homology classes $A_i=[\Phi^{-1}(D_i)] \in H_2(M; \Z)$, and let $v_1, \hdots, v_n$ denote the outward primitive integral vectors normal to the facets.
The moment polytope is given by 
$$P = \{ x \in \mathfrak t^* \,|\, \langle x, v_j \rangle \leq  \kappa_j, \mbox{ for } j=1, \hdots, n\}$$
where $ \kappa_j \in \R$. 
Any face of $P$, given as the intersection of facets $D_{j_1}, \hdots, D_{j\ell}$, admits a \emph{dual cone} consisting by definition of those elements in $\mathfrak t$ which are positive linear combinations of $v_{j_1}, \hdots, v_{j_\ell}$. As explained in \cite[Section 5.1]{McDuffTolman06}, any vector in $\mathfrak t$ lies in the dual cone of a unique face of $P$. Therefore, a subset $I= \{i_1, \hdots, i_k\} \subseteq \{1, \hdots n\}$ determines a unique face of $P$ whose dual cone contains $v_{i_1}+ \hdots+ v_{i_k}$. This face is given as the intersection of facets which we (still) denote by $D_{j_1}, \hdots, D_{j\ell}$ and there exist unique positive integers $c_1, \hdots, c_\ell$ so that 
$$v_{i_1}+ \hdots+ v_{i_k} -c_1v_{j_1}- \hdots -c_\ell v_{j_\ell}=0 \,.$$
Batyrev showed that if $I$ is primitive, the sets $I$ and $J= \{j_1, \hdots, j_\ell\}$ are disjoint. Moreover, if $\beta_I \in H_2(M; \Z)$ is the class corresponding to the above relation (recall from Section \ref{sec:toric-geom-algebr} that $H_2(M; \Z)$ is isomorphic to the set of $(a_1, \hdots, a_n) \in \Z^n$ such that $\sum a_iv_i=0$), then by \eqref{ChernClass}:
\begin{align}
c_1(\beta_I) & = k - c_1 - \hdots -c_\ell \,,\label{ChernNumber}\\
\omega(\beta_I) & = v_{i_1}(D_{i_1})+ \hdots+ v_{i_k}(D_{i_k}) -c_1v_{j_1}(D_{j_1})- \hdots -c_\ell v_{j_\ell}(D_{j_\ell}) \notag \\
 & = \kappa_{i_1}+ \hdots+  \kappa_{i_k} -c_1 \kappa_{j_1}- \hdots - c_\ell  \kappa_{j_\ell} \,.\label{symplecticform}
\end{align}
Denote by  $\Lambda_i$  the circle action corresponding to $v_i$, that is, $\Lambda_i$ is the circle action whose moment map $\Phi_{\Lambda_i}$ is given by 
 the composition of the moment map $\Phi: M \to \mathfrak t^*$ with the linear functional $v_i \in \mathfrak t$. Let $S^*(\Lambda_i)=y_i \otimes q^{-1}t^{-v_i(D_i)} \in QH^{\mathrm{ev}}(M,\omega)^{\times}$ be the cohomological counterpart of the Seidel element. 
In \cite{McDuffTolman06} the authors show the following result.

\begin{prop}\label{QuantumRingIso}
Let $QH^*(M, \omega)$ denote the small quantum cohomology of the toric manifold $(M, \omega)$. The map $\Theta$ which sends $Z_i$ to the Poincar\'e dual of $\Phi^{-1}(D_i)$ induces an isomorphism 
$$ \Q[Z_1, \hdots,Z_n] \otimes \check\Pi / (\Lin(P) + SR_Y(P)) \cong QH^*(M, \omega),$$
where the ideal $\Lin(P)$ is generated by the linear relations 
$$ \Lin(P) = \left \langle \sum(x, v_j)Z_j \ | \ x \in \mathfrak t_{\Z}^* \right \rangle$$
 and the ideal $SR_Y(P)$ is given by 
$$ SR_Y(P)= \left \langle Y_{i_1}\hdots Y_{i_k} - Y_{j_1}^{c_1} \hdots Y_{j_\ell}^{c_\ell} \otimes q^{c_1(\beta_I)}t^{\omega(\beta_I)}  | \ I= \{ i_1, \hdots, i_k\} \mbox { is primitive}\right \rangle,$$
where 
\begin{equation}\label{higherterms}
 Y_i = Z_i + \mbox{ higher order terms},
 \end{equation}
 is a lift of the Seidel element $y_i$ in $\Q[Z_1,\hdots, Z_n] \otimes \check\Pi$, such that $\Theta (Y_i)= y_i$.
  \end{prop}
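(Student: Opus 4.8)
The plan is to construct the map $\Theta$ explicitly on the polynomial ring, verify that it annihilates both defining ideals, and then promote the induced map on the quotient to an isomorphism by a dimension count over the Novikov field. Since $\Q[Z_1,\dots,Z_n]\otimes\check\Pi$ is the free commutative $\check\Pi$--algebra on the $Z_i$ and $QH^*(M,\omega)$ is commutative for the quantum cup product, defining $\Theta$ only requires choosing the images of the generators: I send $Z_i$ to the Poincar\'e dual of $\Phi^{-1}(D_i)$ and extend $\check\Pi$--linearly and multiplicatively. This is manifestly well defined, so the real content is that $\Theta$ kills $\Lin(P)+SR_Y(P)$ and descends to an isomorphism.

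The linear relations are the easy part: for $x\in\mfr t_\Z^*$ the class $\sum_j\langle x,v_j\rangle Z_j$ already vanishes in $H^2(M;\Q)$ by \eqref{cohomologyring}, and since $QH^*(M,\omega)=H^*(M;\Q)\otimes\check\Pi$ as a module this identity persists in quantum cohomology, so $\Theta(\Lin(P))=0$. The heart of the argument is that $\Theta$ annihilates the quantum Stanley--Reisner ideal $SR_Y(P)$, and this is exactly where the multiplicativity of Seidel's morphism $\m S$ enters. Composing $\m S$ with $\pi_1(T)\cong\mfr t_\Z\to\pi_1(\ham(M,\omega))$ yields a group homomorphism $\mfr t_\Z\to QH^*(M,\omega)^{\times}$, $v\mapsto S^*(\Lambda_v)$, satisfying $S^*(\Lambda_{v+v'})=S^*(\Lambda_v)*S^*(\Lambda_{v'})$ because $T$ is abelian. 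For a primitive collection $I=\{i_1,\dots,i_k\}$ the defining relation $v_{i_1}+\dots+v_{i_k}=c_1v_{j_1}+\dots+c_\ell v_{j_\ell}$ therefore gives
\begin{equation*}
  S^*(\Lambda_{i_1})*\cdots*S^*(\Lambda_{i_k})=S^*(\Lambda_{j_1})^{*c_1}*\cdots*S^*(\Lambda_{j_\ell})^{*c_\ell}.
\end{equation*}
Substituting $S^*(\Lambda_i)=y_i\otimes q^{-1}t^{-v_i(D_i)}$ and collecting the powers of $q$ and $t$ via \eqref{ChernNumber} and \eqref{symplecticform} turns this into
\begin{equation*}
  y_{i_1}*\cdots*y_{i_k}=y_{j_1}^{*c_1}*\cdots*y_{j_\ell}^{*c_\ell}\otimes q^{c_1(\beta_I)}t^{\omega(\beta_I)},
\end{equation*}
which is precisely $\Theta$ applied to the generator $Y_{i_1}\cdots Y_{i_k}-Y_{j_1}^{c_1}\cdots Y_{j_\ell}^{c_\ell}\otimes q^{c_1(\beta_I)}t^{\omega(\beta_I)}$ of $SR_Y(P)$, using $\Theta(Y_i)=y_i$ from \eqref{higherterms}. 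Hence $\Theta(SR_Y(P))=0$ and $\Theta$ descends to a $\check\Pi$--algebra map $\bar\Theta$ on the quotient.

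It remains to show $\bar\Theta$ is bijective. Surjectivity is a leading--order statement: the divisor classes $Z_i$ generate $H^*(M;\Q)$ as a ring (standard for smooth toric manifolds), and since the lowest--order term of any quantum product of the $\Theta(Z_i)$ is the corresponding classical cup product, the $\Theta(Z_i)$ generate $QH^*(M,\omega)$ over $\check\Pi$. For injectivity I would compare dimensions over the Novikov field $\check\Pi^{\mathrm{univ}}$ (the $q$--variable only shifts the grading): $QH^*(M,\omega)$ is free of rank $r=\dim_\Q H^*(M;\Q)$, so it suffices to bound the rank of the source quotient by $r$. Equip $\check\Pi^{\mathrm{univ}}$ with the valuation $\nu(\sum r_\kappa t^\kappa)=\min\{\kappa:r_\kappa\neq0\}$; since $Y_i=Z_i+(\text{strictly higher }t\text{--order})$ and $\omega(\beta_I)>0$, the lowest--valuation part of each generator of $SR_Y(P)$ is the classical monomial $Z_{i_1}\cdots Z_{i_k}$, while $\Lin(P)$ is already homogeneous of valuation $0$. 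Thus the associated graded of the quantum quotient is a quotient of $\Q[Z]/(\Lin(P)+SR(P))\cong H^*(M;\Q)$ by \eqref{cohomologyring}, forcing its rank to be at most $r$; a surjection of free modules of equal finite rank is then an isomorphism. The hard part will be precisely this injectivity/rank step: one must verify that the area corrections $t^{\omega(\beta_I)}$ are genuinely of positive valuation, so that no Stanley--Reisner relation collapses in the classical limit, and that the passage from $SR(P)$ to $SR_Y(P)$ is a flat deformation that does not drop the dimension of the quotient.
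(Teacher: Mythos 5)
The paper offers no proof of this proposition: it is quoted directly from McDuff--Tolman \cite{McDuffTolman06}, so there is no in-text argument to compare yours against. That said, your reconstruction is essentially the proof given in that reference, and it is correct. The two pillars are exactly right: the quantum Stanley--Reisner relations come from the fact that $v\mapsto S^*(\Lambda_v)$ is a homomorphism from the additive group $\mfr t_\Z$ to the multiplicative group $QH^*(M,\omega)^\times$, so a primitive relation $v_{i_1}+\cdots+v_{i_k}=c_1v_{j_1}+\cdots+c_\ell v_{j_\ell}$ becomes a multiplicative relation among the $y_i$ whose $q$-- and $t$--exponents match \eqref{ChernNumber} and \eqref{symplecticform}; and bijectivity follows from surjectivity in the classical limit together with a rank bound over the Novikov field. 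Two remarks on the points you flag at the end. First, the positivity $\omega(\beta_I)>0$ is genuine input but is automatic: it is the strict convexity of the support function of the Delzant polytope applied to the primitive relation (the $v_{i_a}$ of a primitive collection cannot all lie in a single cone of the normal fan, since the fan is simplicial), so the initial form of each generator of $SR_Y(P)$ really is the classical monomial $Z_{i_1}\cdots Z_{i_k}$. Second, your worry that the deformation from the classical Stanley--Reisner ideal to $SR_Y(P)$ might not be flat is not needed for the argument as you set it up: the valuation filtration only yields the upper bound $\mathrm{rank}\le r$ for the source, and surjectivity of $\bar\Theta$ onto a free module of rank exactly $r$ over the Novikov field then forces equality and injectivity, with no separate flatness statement to verify. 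The only other hypothesis worth making explicit is the existence of the lifts $Y_i=Z_i+\text{higher order}$ with $\Theta(Y_i)=y_i$, which rests on McDuff--Tolman's structure theorem for Seidel elements (Theorem \ref{theo:MainMcDT}) and on the $Z_j$ generating $H^*(M;\Q)$; the isomorphism then holds for any such choice of lift, which is the form in which the proposition is used later in the paper.
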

 As McDuff and Tolman explain in \cite{McDuffTolman06}, in general, it is not possible to find $Y_i$ without prior knowledge of the ring structure on $QH^*(M,\omega)$ but, in special cases, we can indeed describe $Y_i$. In the Fano case the higher terms vanish and we may take $Y_i=Z_i$. In the NEF case there might be higher order terms in the Seidel elements $y_i$, however, from \cite[Theorem 1.10]{McDuffTolman06} we know that the lifts $Y_i$ of $y_i$ are determined by some linear combination of the $Z_i$ which is unique up to the additive relations $\Lin(P)$ (see \cite[Example 5.4]{McDuffTolman06} for more details).

\subsubsection{Fano case}

In this case the Landau--Ginzburg superpotential is given by 
$$W= \sum_{j=1}^{n} z^{v_j}t^{\kappa_j}$$
where for $v_j=(v_{j,1}, \hdots, v_{j,m}) \in \Z^m$ the term $z^{v_j}$ is the monomial $z_1^{v_{j,1}}\hdots z_m^{v_{j,m}}$.

We now recall a result obtained by Givental in \cite{Giv2} (which we illustrate with Ostrover--Tyomkin's formalism, see \cite[Proposition 3.3]{OstroverTyomkin09}). 
  \begin{theo}\label{Ostrover}
 If $(M,\omega)$ is a symplectic Fano manifold, then 
 $$ QH^*(M, \omega) \cong \check\Pi[z_1^{\pm}, \hdots, z_m^{\pm}] / J_W \ \mbox{as $\check\Pi$--algebras}$$
 and in particular
 $$ QH^0(M, \omega) \cong \check\Pi^{\mathrm{univ}}[z_1^{\pm}, \hdots, z_m^{\pm}] / J_W \ \mbox{as $\check\Pi^{\mathrm{univ}}$--algebras}$$
 where $J_W$ is the ideal generated by all partial derivatives of $W$. 
 \end{theo}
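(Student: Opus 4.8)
The plan is to derive the statement from the presentation of $QH^*(M,\omega)$ already recorded in Proposition \ref{QuantumRingIso}, matching it generator-for-generator and relation-for-relation with the Jacobian ring of $W$. Since $(M,\omega)$ is Fano, the higher order terms in \eqref{higherterms} vanish, so the lifts may be taken to be $Y_i=Z_i$ and the quantum Stanley--Reisner ideal $SR_Y(P)$ becomes the honest ideal $SR(P)$ generated by the classes $Z_{i_1}\cdots Z_{i_k}-Z_{j_1}^{c_1}\cdots Z_{j_\ell}^{c_\ell}\otimes q^{c_1(\beta_I)}t^{\omega(\beta_I)}$ for $I$ primitive. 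Proposition \ref{QuantumRingIso} thus identifies $QH^*(M,\omega)$ with $\Q[Z_1,\hdots,Z_n]\otimes\check\Pi/(\Lin(P)+SR(P))$, and the task reduces to identifying this ring with $\check\Pi[z_1^{\pm},\hdots,z_m^{\pm}]/J_W$.

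First I would define a $\check\Pi$--algebra homomorphism $\Psi\co\Q[Z_1,\hdots,Z_n]\otimes\check\Pi\to\check\Pi[z_1^{\pm},\hdots,z_m^{\pm}]$ sending $Z_j$ to the $j$--th monomial of the superpotential, twisted by the grading variable: $\Psi(Z_j)=q\,z^{v_j}t^{\kappa_j}$. The factor $q$ is forced by the requirement that $\Psi$ be graded, with $\deg z_i=0$, $\deg t=0$, $\deg q=2$, so that $\deg\Psi(Z_j)=2=\deg Z_j$. Because for a smooth complete toric manifold the normals $v_1,\hdots,v_n$ positively span $\mathfrak t$ and contain a $\Z$--basis of $\mathfrak t_\Z$, and because $q$ and $t^{\kappa_j}$ are units, the image of $\Psi$ contains every $z^{\pm e_i}$; hence $\Psi$ is surjective onto $\check\Pi[z^{\pm}]$.

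The two verifications that make $\Psi$ descend to the quotients are short computations. For a primitive collection $I$ the defining relation of $\beta_I$ reads $v_{i_1}+\hdots+v_{i_k}=c_1v_{j_1}+\hdots+c_\ell v_{j_\ell}$, so the $z$--exponents of $\Psi(Z_{i_1}\cdots Z_{i_k})$ and $\Psi(Z_{j_1}^{c_1}\cdots Z_{j_\ell}^{c_\ell})$ agree; formula \eqref{symplecticform} matches the $t$--exponents once one multiplies by $t^{\omega(\beta_I)}$, and \eqref{ChernNumber} matches the $q$--exponents once one multiplies by $q^{c_1(\beta_I)}$. Hence $\Psi$ annihilates every generator of $SR(P)$. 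On the other hand, letting $x$ range over a basis of $\mathfrak t_\Z^*$, the generators $\sum_j(x,v_j)Z_j$ of $\Lin(P)$ are sent to $q\,z_i\,\partial W/\partial z_i=q\sum_j v_{j,i}\,z^{v_j}t^{\kappa_j}$; since $q$ and the $z_i$ are units, these generate exactly the Jacobian ideal $J_W$. Therefore $\Psi$ induces a surjective $\check\Pi$--algebra map $\overline\Psi\co QH^*(M,\omega)\to\check\Pi[z^{\pm}]/J_W$.

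The remaining step, injectivity, is where I expect the main obstacle to lie, since it is the only place the Fano hypothesis is genuinely used. Here I would pass to the degree--$0$ part: as $J_W$ involves no $q$ and $q$ is an invertible degree--$2$ element, the gradings give $\check\Pi[z^{\pm}]/J_W\cong(\check\Pi^{\mathrm{univ}}[z^{\pm}]/J_W)[q,q^{-1}]$ and likewise $QH^*\cong QH^0[q,q^{-1}]$, so it suffices to check that $\overline\Psi$ is bijective in degree $0$. Now $\check\Pi^{\mathrm{univ}}$ is a field (a Novikov field), so both degree--$0$ pieces are finite--dimensional $\check\Pi^{\mathrm{univ}}$--vector spaces and a surjection between them is an isomorphism as soon as their dimensions agree. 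The source has $\dim_\Q H^*(M;\Q)=\chi(M)$ (all cohomology of a toric manifold is even). For the target, the Fano condition ensures that $W$ is a nondegenerate Laurent polynomial whose Newton polytope is the convex hull of $v_1,\hdots,v_n$ with $0$ in its interior, so its Jacobian (Milnor) ring has dimension equal to the normalized volume of that polytope (Kouchnirenko's theorem), which for the fan of a smooth Fano toric equals the number of maximal cones, i.e.\ the number of vertices of $P$, again $\chi(M)$. Thus $\overline\Psi$ is an isomorphism in degree $0$, giving the $\check\Pi^{\mathrm{univ}}$--algebra statement; tensoring with $[q,q^{-1}]$ then yields the graded $\check\Pi$--algebra isomorphism $QH^*(M,\omega)\cong\check\Pi[z^{\pm}]/J_W$.
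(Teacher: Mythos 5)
The paper does not actually prove this theorem: it quotes it from Givental, and the discussion that follows the statement in the text is precisely your first three paragraphs --- the definition $\Psi(Z_j)=q\,z^{v_j}t^{\kappa_j}$, the check that the generators of $SR_Y(P)$ (with $Y_i=Z_i$ in the Fano case) are killed by $\Psi$ using \eqref{ChernNumber} and \eqref{symplecticform}, and the identification of $\Psi\bigl(\sum_j(x,v_j)Z_j\bigr)$ with $qz_i\,\partial W/\partial z_i$. Up to that point your argument agrees with the paper essentially verbatim, and together with Proposition \ref{QuantumRingIso} and your (correct) surjectivity observation it yields a well-defined surjection $\overline\Psi\colon QH^*(M,\omega)\to\check\Pi[z_1^{\pm},\hdots,z_m^{\pm}]/J_W$.

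The gap is in your final paragraph. Reducing to degree $0$ and counting dimensions is a legitimate plan, and $\dim_{\check\Pi^{\mathrm{univ}}}QH^0(M,\omega)=\chi(M)$ is immediate; but the equality $\dim_{\check\Pi^{\mathrm{univ}}}\check\Pi^{\mathrm{univ}}[z^{\pm}]/J_W=\chi(M)$ is exactly the nontrivial content of the theorem, and you obtain it by asserting that ``the Fano condition ensures that $W$ is a nondegenerate Laurent polynomial''. That is not automatic: Kouchnirenko nondegeneracy is a condition on the restrictions of $W$ to \emph{all} positive-dimensional faces of its Newton polytope, it fails for special coefficients, and here the coefficients $t^{\kappa_j}$ are specific rather than generic; moreover Kouchnirenko's theorem is a statement over $\C$, so its use over the Novikov field $\check\Pi^{\mathrm{univ}}$ needs justification. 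The existing proofs (Ostrover--Tyomkin, and in greater generality Fukaya--Oh--Ohta--Ono) establish precisely this point by a valuation/leading-term analysis: using that $\kappa$ lies in the K\"ahler cone, one shows that $W$ has exactly one nondegenerate critical point over $\check\Pi^{\mathrm{univ}}$ for each vertex of $P$. Without an argument of that kind the dimension count, and hence the injectivity of $\overline\Psi$, remains unproved. The purely combinatorial part of your count --- that the normalized volume of $\mathrm{conv}(v_1,\hdots,v_n)$ equals the number of maximal cones for a smooth Fano fan --- is correct and is indeed where the Fano hypothesis enters.
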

 
In \cite{OstroverTyomkin09} the authors consider  the natural homomorphism
 $$ \Psi:  \Q[Z_1, \hdots,Z_n] \otimes \check\Pi \to \check\Pi [z_1^{\pm}, \hdots, z_m^{\pm}] $$
 such that $SR_Y(P)$ is in the kernel of $\Psi$ and the image of the additive relations gives the ideal $J_W$. 
 In this case the homomorphism is defined by 
 $$ \Psi(Z_j) = q z^{v_j}t^{\kappa_j} $$ and it is easy to see that $\Psi$ satisfies the desired properties. 
 Indeed,  as we saw above, in the Fano case we may set $Y_i=Z_i$ hence 
 $$ SR_Y(P)= \left \langle Z_{i_1}\hdots Z_{i_k} - Z_{j_1}^{c_1} \hdots Z_{j_\ell}^{c_\ell} \otimes q^{c_1(\beta_I)}t^{\omega(\beta_I)} \ | \ I= \{ i_1, \hdots, i_k\} \mbox { is primitive}\right \rangle$$
 and 
 \begin{equation*}
 \begin{split}
  \Psi(Z_{i_1}& \hdots Z_{i_k} - Z_{j_1}^{c_1} \hdots Z_{j_\ell}^{c_\ell} \otimes q^{c_1(\beta_I)}t^{\omega(\beta_I)} )  \\
& = q^kz^{v_{i_1}}\hdots z^{v_{i_k}}t^{\kappa_{i_1}+\hdots+\kappa_{i_k}} - q^{c_1+ \hdots+c_\ell} z^{c_1v_{j_1}} \hdots z^{c_\ell v_{j_\ell}} t^{c_1\kappa_{j_1}+\hdots +c_\ell \kappa_{j_\ell}} \otimes q^{c_1(\beta_I)}t^{\omega(\beta_I)} = 0
  \end{split}
 \end{equation*}
 by \eqref{ChernNumber} and \eqref{symplecticform}. Therefore the ideal $SR_Y(P)$ is in the kernel of $\Psi$. 
 
 The image of the additive relations is the following
 $$ \Psi \left(\sum_{j=1}^n(x, v_j)Z_j\right)= q \sum_{j=1}^n(x, v_j)z^{v_j}  t^{\kappa_j}.$$
 On the other hand, we have
 $$ qz_i\frac{\partial W}{\partial z_i} = q z_i \sum_{j=1}^n v_{j,i} \, z_1^{v_{j,1}}\hdots z_i^{v_{j,i}-1}\hdots z_m^{v_{j,m}} t^{\kappa_j}=q \sum_{j=1}^n v_{j,i} \, z^{v_j}t^{\kappa_j}.$$
 Note that if $x =e_i$ is the $i$--th vector of the canonical base in $\R^n$ then $(x, v_j)= v_{j,i}$ and one obtains the desired result. 
 
\subsubsection{NEF case}

In this subsection we give the explicit expression of the Landau--Ginzburg superpotential when $M$ is a NEF 4--dimensional toric manifold for which at most 2 of the homology classes $A_i= [\Phi^{-1}(D_i)]$ of the pre-image of the facets $D_i$ have vanishing first Chern number. It follows from the proof of the next proposition that the result generalizes to any number of classes (corresponding to facets of the polytope) with Chern number zero, but the expressions get more complicated as we increase the number of such classes. Moreover, Theorem \ref{Ostrover} still holds for these cases. 
\begin{prop}  \label{Superpotential} If $M$ is a NEF  toric 4--manifold and $A_i= [\Phi^{-1}(D_i)]$ where $D_i$ is a facet of the moment polytope then the Landau--Ginzburg superpotential is given by the following expression: 
\begin{enumerate}
\item if  $c_1$ vanishes only on the class $A_k$ then 
 $$ W = \sum_{j=1}^{n} z^{v_j}t^{\kappa_j} +z^{v_k} t^{\kappa_{k+1}+\kappa_{k-1}-\kappa_k} \,,$$
\item  if $c_1$ vanishes only on the classes $A_{k-1}$ and $A_k$ then 
\begin{align*}
W = \sum_{j=1}^{n} z^{v_j}t^{\kappa_j}& +z^{v_k} t^{\kappa_{k+1}+\kappa_{k-1}-\kappa_k}+z^{v_{k-1}} t^{\kappa_{k}+\kappa_{k-2}-\kappa_{k-1}}\\  
& + z^{v_k} t^{\kappa_{k+1}+\kappa_{k-2}-\kappa_{k-1}} + z^{v_{k-1}} t^{\kappa_{k+1}+\kappa_{k-2}-\kappa_{k}} \,. 
\end{align*}
\end{enumerate}
\end{prop}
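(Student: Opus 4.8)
The plan is to mirror the Fano computation carried out above, replacing the variables $Z_i$ by the Seidel lifts $Y_i$ of Proposition \ref{QuantumRingIso}. Concretely, I would define the ring homomorphism $\Psi\colon \Q[Z_1,\dots,Z_n]\otimes\check\Pi \to \check\Pi[z_1^{\pm},\dots,z_m^{\pm}]$ by imposing $\Psi(Y_i)=q\,z^{v_i}t^{\kappa_i}$ for all $i$, exactly as before but now on the lifts rather than on the $Z_i$ themselves. With this normalization, the verification that $SR_Y(P)\subseteq\ker\Psi$ is formally identical to the Fano argument: for a primitive collection $I$ one checks $\Psi(Y_{i_1}\cdots Y_{i_k})=\Psi(Y_{j_1}^{c_1}\cdots Y_{j_\ell}^{c_\ell}\otimes q^{c_1(\beta_I)}t^{\omega(\beta_I)})$ using \eqref{ChernNumber} and \eqref{symplecticform}. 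The superpotential is then recovered, as in the Fano case, from the image of the linear relations: taking $x=e_i$ one wants $\sum_j v_{j,i}\,\Psi(Z_j)=q\,z_i\,\partial_{z_i}W$, and $W$ is obtained by integrating. Since the only classes on which $c_1$ vanishes are $A_k$ (resp. $A_{k-1},A_k$), every correction carries $q^0$, so $W$ stays $q$-free.

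The heart of the argument is the computation of $\Psi(Z_i)$, which requires expressing each lift $Y_i$ through the $Z_j$. This is where Theorem \ref{theo:SeidelsMorphism} enters: the cohomological Seidel element $y_i=\Theta(Y_i)$ is the Poincaré-dual avatar of $S(\Lambda_i)$, whose maximal fixed component is $\Phi^{-1}(D_i)$ of class $A_i$. For $i$ away from the indices where $c_1$ vanishes the action is Fano-like and $Y_i=Z_i$. Near $k$ (resp. $k-1,k$) the relevant cases of Theorem \ref{theo:SeidelsMorphism} produce geometric series; e.g. in situation \textit{(1)}, case \textit{(2a)} gives $Y_k=Z_k/(1-t^{\omega(A_k)})$ and case \textit{(3a)} gives $Y_{k\pm1}=Z_{k\pm1}-Z_k\,t^{\omega(A_k)}/(1-t^{\omega(A_k)})$. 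Inverting these relations is the decisive simplification: from $Z_k=(1-t^{\omega(A_k)})Y_k$ and $Z_{k\pm1}=Y_{k\pm1}+Z_k\,t^{\omega(A_k)}/(1-t^{\omega(A_k)})$ the geometric denominators cancel, and each $\Psi(Z_i)$ collapses to a two-term expression
\[
\Psi(Z_k)=q\,z^{v_k}\bigl(t^{\kappa_k}-t^{\kappa_k+\omega(A_k)}\bigr),\qquad
\Psi(Z_{k\pm1})=q\,z^{v_{k\pm1}}t^{\kappa_{k\pm1}}+q\,z^{v_k}t^{\kappa_k+\omega(A_k)},
\]
where $\omega(A_k)=\kappa_{k-1}+\kappa_{k+1}-2\kappa_k$ by \eqref{symplecticform}, so that $\kappa_k+\omega(A_k)=\kappa_{k+1}+\kappa_{k-1}-\kappa_k$.

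With these $\Psi(Z_i)$ in hand, the passage to $W$ is bookkeeping. Collecting the correction terms in $\sum_j v_{j,i}\Psi(Z_j)$ and using the toric relation $v_{k-1}+v_{k+1}=d_kv_k=2v_k$ (valid because $c_1(A_k)=0$ forces $D_k\cdot D_k=-2$, hence $d_k=2$ in \eqref{relationvectorsI}), the coefficient $-v_{k,i}+v_{k-1,i}+v_{k+1,i}$ collapses to $v_{k,i}$, which is exactly $q\,z_i\,\partial_{z_i}$ applied to the single monomial $z^{v_k}t^{\kappa_{k+1}+\kappa_{k-1}-\kappa_k}$; integrating yields formula \textit{(1)}. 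For formula \textit{(2)} the same scheme applies with the four modified lifts $Y_{k-2},Y_{k-1},Y_k,Y_{k+1}$ arising from cases \textit{(2b)} and \textit{(3b)}, and with the three primitive collections $\{v_{k-1},v_{k+1}\}$, $\{v_{k-2},v_k\}$, $\{v_{k-2},v_{k+1}\}$ (whose classes all have $c_1=0$, the last satisfying $v_{k-2}+v_{k+1}=v_{k-1}+v_k$). I expect the main obstacle to be precisely this two-vanishing case: one must correctly translate each homological Seidel element of Theorem \ref{theo:SeidelsMorphism} into its cohomological lift $Y_i$ (fixing the $q,t$ and Poincaré-duality conventions once and for all, consistent with \eqref{higherterms}), and then verify that the several nested geometric series again telescope—through cancellation of the $(1-t^{\omega(A_\bullet)})$ factors—so as to leave exactly the four finite monomials stated, with no spurious infinite tails. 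The general case of arbitrarily many vanishing classes is identical in spirit but combinatorially heavier, which is why the statement is restricted to at most two such classes.
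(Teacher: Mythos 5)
Your proposal is correct and follows essentially the same route as the paper: define $\Psi$ by $\Psi(Y_i)=q\,z^{v_i}t^{\kappa_i}$ on the lifts read off from Theorem \ref{theo:SeidelsMorphism}, invert to get the two-term expressions for $\Psi(Z_k)$ and $\Psi(Z_{k\pm1})$ (which match the paper's formulas exactly, including $\kappa_k+\omega(A_k)=\kappa_{k+1}+\kappa_{k-1}-\kappa_k$), check $SR_Y(P)\subseteq\ker\Psi$ as in the Fano case, and recover $W$ from the image of the linear relations using $v_{k-1}+v_{k+1}=2v_k$. The two-vanishing case is likewise handled in the paper exactly as you describe, with the four modified lifts $Y_{k-2},\dots,Y_{k+1}$ from cases \textit{(2b)} and \textit{(3b)}.
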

\begin{proof}
\emph{Case (1):} in this case the Seidel elements are given by Theorem \ref{theo:SeidelsMorphism}:
\begin{align*}
& S(\Lambda_j)=A_j \otimes q t^{\kappa_j} \ \mbox{ if } \ j \neq k-1, k, k+1, \\ 
& S(\Lambda_{k-1})=A_{k-1} \otimes q t^{\kappa_{k-1}}-  A_k \otimes q \frac{t^{\kappa_{k-1}-\omega(A_k)}}{1- t^{-\omega(A_k)}},\\
& S(\Lambda_k)=A_k \otimes q \frac{t^{\kappa_k}}{1- t^{-\omega(A_k)}},\\
& S(\Lambda_{k+1})=A_{k+1} \otimes q t^{\kappa_{k+1}}-  A_k \otimes q \frac{t^{\kappa_{k+1}-\omega(A_k)}}{1- t^{-\omega(A_k)}}.
\end{align*}
If $S^*$ denotes the Seidel morphism in cohomology then we have 
\begin{align*}
& S^*(\Lambda_j)=Z_j \otimes q^{-1}t^{-\kappa_j} \ \mbox{ if } \ j \neq k-1, k, k+1, \\ 
& S^*(\Lambda_{k-1})=\left( Z_{k-1} -Z_k \otimes \frac{t^{\omega(A_k)}}{1- t^{\omega(A_k)}}\right)\otimes q^{-1} t^{-\kappa_{k-1}},\\
& S^*(\Lambda_k)=Z_k \otimes q^{-1} \frac{t^{-\kappa_k}}{1- t^{\omega(A_k)}},\\
& S^*(\Lambda_{k+1})=\left( Z_{k+1} -Z_k \otimes \frac{t^{\omega(A_k)}}{1- t^{\omega(A_k)}}\right)\otimes q^{-1} t^{-\kappa_{k+1}}.
\end{align*}
Thus in equation \eqref{higherterms} we may take
\begin{align*}
&Y_j =Z_j \ \mbox{ if } \ j \neq k-1, k, k+1,  \quad Y_k =Z_k \otimes \frac{1}{1- t^{\omega(A_k)}},\\
&Y_{k-1} =Z_{k-1} - Z_k \otimes \frac{t^{\omega(A_k)}}{1- t^{\omega(A_k)}}, \quad Y_{k+1} =Z_{k+1} - Z_k \otimes \frac{t^{\omega(A_k)}}{1- t^{\omega(A_k)}}
\end{align*}
where $\omega(A_k)=\kappa_{k+1}+\kappa_{k-1}-2\kappa_k$.
In this case, the definition of the homomorphism $\Psi$ is such that 
\begin{equation}\label{defpsi}
\forall\, {1\leq j \leq n}, \quad \Psi (Y_j)= qz^{v_j}t^{\kappa_j}
\end{equation}
so one obtains 
\begin{align*}
&\Psi(Z_j) = q z^{v_j}t^{\kappa_j} \quad \mbox{ if } \quad j \neq k-1, k, k+1 \,,\\
&\Psi(Z_{k-1}) = q z^{v_{k-1}}t^{\kappa_{k-1}} +q z^{v_k}t^{\kappa_{k+1}+\kappa_{k-1}-\kappa_k} \,,\\
&\Psi(Z_k) = q z^{v_k}t^{\kappa_k} (1-t^{\omega(A_k)})=q z^{v_k}t^{\kappa_k}-q z^{v_k}t^{\kappa_{k+1}+\kappa_{k-1}-\kappa_k} \,, \\
&\Psi(Z_{k+1}) = q z^{v_{k+1}}t^{\kappa_{k+1}} +q z^{v_k}t^{\kappa_{k+1}+\kappa_{k-1}-\kappa_k} \,.
 \end{align*}
 It is clear, by definition of $\Psi$ and the proof in the Fano case that $SR_Y(P)$ is in the kernel of the homomorphism. 
 Computing the image of the additive relations gives
 \begin{equation*}
 \begin{split}
 \Psi \Big(\sum_{j=1}^n(x, v_j)Z_j \Big)  & =  q \sum_{j=1}^n(x, v_j)z^{v_j}  t^{\kappa_j}  - q(x, v_k)z^{v_k}t^{\kappa_{k+1}+\kappa_{k-1}-\kappa_k} \\
 &  +q(x, v_{k-1})z^{v_k}t^{\kappa_{k+1}+\kappa_{k-1}-\kappa_k}+q(x, v_{k+1})z^{v_k}t^{\kappa_{k+1}+\kappa_{k-1}-\kappa_k}.
 \end{split}
 \end{equation*}
 In order to obtain the derivatives of the potential we need 
 $$(x, v_{k-1})+(x, v_{k+1})- (x, v_{k})=(x, v_{k}), \mbox{ that is, } v_{k-1}+v_{k+1}=2 v_k,$$
 which holds, if $\dim M= 4$ and $c_1(A_k)=0$, as noticed already in Section \ref{sec:toric-topol-data}, Equation \eqref{relationvectorsII}.
 
\emph{Case (2):} 
In this case Theorem \ref{theo:SeidelsMorphism} gives the following:
\begin{align*}
& Y_j  =  Z_j \quad \mbox{ if } \quad j \neq k-2,k-1, k, k+1 \,, \\
 & Y_{k-2}  = Z_{k-2} - Z_{k-1} \otimes \frac{t^{\omega(A_{k-1})}}{1- t^{\omega(A_{k-1})}} - Z_{k-1} \otimes \frac{t^{\omega(A_{k-1})+ \omega(A_k)}}{(1- t^{\omega(A_{k-1})})(1- t^{\omega(A_{k-1})+\omega(A_k)})}  \\
  &  \quad \quad \quad \quad + Z_{k} \otimes \frac{t^{\omega(A_{k-1})+ 2\omega(A_k)}}{(1- t^{\omega(A_{k})})(1- t^{\omega(A_{k-1})+\omega(A_k)})} \,,\\
 & Y_{k-1}  = \left( Z_{k-1} \otimes \frac{1}{1- t^{\omega(A_{k-1})}} -Z_k \otimes \frac{t^{\omega(A_k)}}{1- t^{\omega(A_k)}}\right) \frac{1}{1- t^{\omega(A_{k-1})+\omega(A_k)}} \,,\\
 & Y_k  = \left( Z_k \otimes \frac{1}{1- t^{\omega(A_k)}}-Z_{k-1} \otimes \frac{t^{\omega(A_{k-1})}}{1- t^{\omega(A_{k-1})}}\right) \frac{1}{1- t^{\omega(A_{k-1})+\omega(A_k)}} \,,\\
  & Y_{k+1}  = Z_{k+1} - Z_k \otimes \frac{t^{\omega(A_k)}}{1- t^{\omega(A_k)}}-Z_{k} \otimes \frac{t^{\omega(A_{k-1})+ \omega(A_k)}}{(1- t^{\omega(A_{k})})(1- t^{\omega(A_{k-1})+\omega(A_k)})} \\
  &  \quad \quad \quad \quad + Z_{k-1} \otimes \frac{t^{\omega(A_{k})+ 2\omega(A_{k-1})}}{(1- t^{\omega(A_{k-1})})(1- t^{\omega(A_{k-1})+\omega(A_k)})} \,.
\end{align*}
Therefore, as above, if we define $\Psi$ such that it satisfies \eqref{defpsi} then we obtain 
\begin{align*}
& \Psi(Z_j)  = q z^{v_j}t^{\kappa_j} \quad \mbox{ if } \quad j \neq k-2, k-1, k, k+1, \\
& \Psi(Z_{k-2})  = q z^{v_{k-2}}t^{\kappa_{k-2}} +q z^{v_{k}}t^{\kappa_{k+1,k-1}+\kappa_{k-2}} + q z^{v_{k-1}}t^{\kappa_{k-2}}(t^{\kappa_{k,k-1}}+t^{\kappa_{k+1,k}}),\\
& \Psi(Z_{k-1})  = q z^{v_{k-1}}t^{\kappa_{k-1}}(1-t^{\kappa_{k,k-1}+\kappa_{k-2,k-1}})  + q z^{v_{k}}t^{\kappa_{k+1}}(t^{\kappa_{k-1,k}}- t^{\kappa_{k-2,k-1}}),\\
& \Psi(Z_k)  = q z^{v_k}t^{\kappa_k}(1 -t^{\kappa_{k+1,k}+\kappa_{k-1,k}}) + q z^{v_{k-1}}t^{\kappa_{k-2}}(t^{\kappa_{k,k-1}} -t^{\kappa_{k+1,k}}),\\
& \Psi(Z_{k+1})  = q z^{v_{k+1}}t^{\kappa_{k+1}} + q z^{v_{k-1}}t^{\kappa_{k+1,k}+\kappa_{k-2}} + q z^{v_k}t^{\kappa_{k+1}}(t^{\kappa_{k-1,k}}  +t^{\kappa_{k-2,k-1}})
 \end{align*}
 where $\kappa_{i,j}= \kappa_i -  \kappa_j$.
 Again, it is clear that $SR_Y(P)$ is in the kernel of the homomorphism and it is not hard to check that the image of the additive relations gives the derivatives of the superpotential, under the assumptions that $v_{k-1}+v_{k+1}=2 v_k$ and $v_{k-2}+v_{k}=2 v_{k-1}$.
 \end{proof}

\subsection{NEF examples: The case of a blow--up of $\bb C \bb P^2$ at 4 or 5 points}
\label{sec:nef-exples-45pt-bu}

In this section, as an application of our results, we  compute explicitly the small quantum cohomology (and homology) of the manifold obtained from $ \bbcp^2$ by performing 4 and 5 blow-ups,  $\X_4$ and $\X_5$ respectively. Note that these manifolds admit NEF almost complex structures, but no Fano ones. Since the computations are similar, we show the full computations for $\X_4$ and only give the final result for $\X_5$. As already noticed in Example \ref{expl:23-pt-bu}, $\X_4$ is symplectomorphic to the $3$--point blow-up of $S^2 \times S^2$ endowed with the split symplectic form $\omega_\mu$ for which the symplectic area of the first factor is $\mu$ and the area of the second factor is 1 (see \cite[Section 2.1]{AnjosPinsonnault13} for more details). Let $c_i$, $i=1, \hdots,4$ be the capacities of the blow-ups. Let $B$, $F \in H_2(\X_4;\Z)$ be the homology classes defined by  $B=[S^2 \times \{p\}]$, $F=[\{p\} \times S^2]$ and  let $E_i\in H_2(\X_4;\Z)$ be the exceptional class corresponding to the blow-up of capacity $c_i$. Consider $\X_4$ endowed with the standard action of the torus $T=S^1 \times  S^1$ for which the moment polytope is given by 
\begin{equation*}
P =  \left\{ (x_1,x_2) \in \R^2 \mid 0 \leq x_2 \leq \mu, \, x_2+x_1 \leq \mu -c_3, \, -1 \leq x_1 \leq 0,
  \, c_1 \leq x_2 -x_1 \leq \mu+1 -c_2  \right \}
\end{equation*}  so the primitive outward normals to $P$ are as follows:
\begin{equation*} v_1=(0,1), \, v_2=(1,1), \, v_3=(1,0), \,
  v_4=(1,-1), \, v_5=(0,-1), \, v_6=(-1,0), \ \mbox{and} \ v_7=(-1,1).
\end{equation*} 
The normalised moment map $\Phi: \X_4 \to \R^2$ is given by
$$ \Phi(z_1, \hdots, z_7)=\Big( -\frac12 |z_3|^2 + \epsilon_1, -\frac12 |z_1|^2 + \mu - \epsilon_2 \Big),$$
where $$\epsilon_1=\frac{c_1^3+3c_2^2-c_2^3+c_3^3 -3\mu}{3(c_1^2+c_2^2+c_3^2-2\mu)} \quad \mbox{and} \quad  \epsilon_2=\frac{c_1^3 - c_2^3 -c_3^3 +3c_2^2\mu+3c_3^2\mu -3\mu^2}{3(c_1^2+c_2^2+c_3^2-2\mu)} \,.$$ 
     Moreover, the homology classes $A_i= [\Phi^{-1}(D_i)]$ of the pre-images of the corresponding facets $D_i$ are:  
$A_1=F-E_2-E_3 $, $A_2=E_3$, $A_3=B-E_1-E_3$, $A_4=E_1$,  $A_5=F-E_1$, $A_6=B-E_2$, and 
$A_7=E_2$.
Let $\Lambda_i$ be the circle action associated to $v_i$. 
Since the complex structure on $\X_4$ is NEF and $T$--invariant, it follows from Theorem \ref{theo:SeidelsMorphism} that the Seidel elements associated to these actions are given by the following expressions
\begin{align*}
S(\Lambda_1) & =(F-E_2-E_3) \otimes q \frac{t^{\mu-\epsilon_2}}{1- t^{c_2+c_3-1}} \,,\\
S(\Lambda_2) & =E_3 \otimes qt^{\mu -c_3+ \epsilon_1-\epsilon_2} - (F-E_2-E_3) \otimes q \frac{t^{\mu+c_2-1+\epsilon_1-\epsilon_2}}{1- t^{c_2+c_3-1}}  \\ & \qquad \qquad - (B-E_1-E_3) \otimes q \frac{t^{c_1+\epsilon_1-\epsilon_2}}{1- t^{c_1+c_3-\mu}} \,,\\
S(\Lambda_3) & =(B-E_1-E_3) \otimes q \frac{t^{\epsilon_1}}{1- t^{c_1+c_3-\mu}} \,,\\
S(\Lambda_4) & =E_1 \otimes qt^{\epsilon_1+\epsilon_2-c_1} - (B-E_1-E_3) \otimes q \frac{t^{\epsilon_1+\epsilon_2+c_3-\mu}}{1- t^{c_1+c_3-\mu}} \,,\\
S(\Lambda_5) & =(F-E_1)\otimes q {t^{\epsilon_2}},\quad \quad 
S(\Lambda_6)  =(B-E_2)\otimes q {t^{1-\epsilon_1}},\\
S(\Lambda_7) & =E_2 \otimes qt^{\mu+1 -c_2- \epsilon_1-\epsilon_2} - (F-E_2-E_3) \otimes q \frac{t^{\mu+c_3-\epsilon_1-\epsilon_2}}{1- t^{c_2+c_3-1}} \,.
\end{align*}
Therefore we have 
\begin{align*}
S^*(\Lambda_1) & =Z_1 \otimes q^{-1} \frac{t^{\epsilon_2-\mu}}{1- t^{1-c_2-c_3}}, \quad \quad S^*(\Lambda_3)  =Z_3 \otimes q^{-1} \frac{t^{- \epsilon_1}}{1- t^{\mu-c_1-c_3}} \,,\\
S^*(\Lambda_2) & =Z_2 \otimes q^{-1}t^{c_3-\mu-\epsilon_1+\epsilon_2} - Z_1\otimes q^{-1} \frac{t^{1-\mu-c_2-\epsilon_1+\epsilon_2}}{1- t^{1-c_2-c_3}} - Z_3 \otimes q^{-1} \frac{t^{\epsilon_2-\epsilon_1-c_1}}{1- t^{\mu -c_1-c_3}} \,,\\
\end{align*}
\begin{align*}
S^*(\Lambda_4) & =Z_4 \otimes q^{-1}t^{c_1-\epsilon_1-\epsilon_2} - Z_3 \otimes q^{-1} \frac{t^{\mu -c_3 -\epsilon_1-\epsilon_2}}{1- t^{\mu-c_1-c_3}} \,,\\
S^*(\Lambda_5) & =Z_5\otimes q^{-1} {t^{-\epsilon_2}}, \quad \quad
S^*(\Lambda_6)  =Z_6\otimes q^{-1} {t^{\epsilon_1-1}},\\
S^*(\Lambda_7) & =Z_7\otimes q^{-1}t^{c_2-\mu-1 + \epsilon_1+\epsilon_2} - Z_1\otimes q^{-1} \frac{t^{\epsilon_1+\epsilon_2-\mu-c_3}}{1- t^{1-c_2-c_3}} \,.
\end{align*}
Thus in equation \eqref{higherterms} we may take
\begin{align*}
Y_1 & = Z_1 \otimes  \frac{1}{1- t^{1-c_2-c_3}}, \quad Y_2 = Z_2   - Z_1\otimes  \frac{t^{1-c_2-c_3}}{1- t^{1-c_2-c_3}} - Z_3 \otimes \frac{t^{\mu-c_1-c_3}}{1- t^{\mu -c_1-c_3}},\\
Y_3 & = Z_3 \otimes  \frac{1}{1- t^{\mu-c_1-c_3}}, \quad Y_4=Z_4 - Z_3 \otimes  \frac{t^{\mu-c_1-c_3}}{1- t^{\mu-c_1-c_3}}, \quad Y_5 = Z_5,\\
Y_6 & = Z_6, \quad  \mbox{and} \quad Y_7 = Z_7   - Z_1\otimes  \frac{t^{1-c_2-c_3}}{1- t^{1-c_2-c_3}}.
\end{align*}
There are fourteen primitive sets: $$\{1,3\},\{1,4\},\{1,5\},\{1,6\},\{2,4\},\{2,5\},\{2,6\}, \{2,7\},\{3,5\},\{3,6\}, \{3,7\},\{4,6\},\{4,7\},\{5,7\}.$$
 Let $t_{2,3}=1- t^{1-c_2-c_3}$ and $t_{1,3}=1- t^{\mu -c_1-c_3}$. The corresponding multiplicative relations for $QH^*(\X_4,\omega_\mu)$, that is, the generators of the ideal $SR_Y(P)$ defined in Proposition  \ref{QuantumRingIso}, can be written as follows
\begin{equation}\label{MultiplicativeRelations}
\begin{split}
 Z_1Z_3  & = Z_2 \otimes q t^{c_3} t_{2,3}t_{1,3} - Z_1 \otimes q t^{1-c_2}t_{1,3} - Z_3 \otimes qt^{\mu -c_1}t_{2,3},\\
 Z_1Z_4  t_{1,3} & = Z_1Z_3 \otimes t^{\mu -c_1-c_3} + Z_3 \otimes q  t^{\mu-c_1}t_{2,3},\\
 Z_1Z_5   &= \mathbbm{1}\otimes q^2 t^{\mu}t_{2,3}, \\
 Z_1Z_6 & = Z_7 \otimes qt^{c_2 }t_{2,3} -Z_1 \otimes q t^{1-c_3},\\
 Z_2Z_4t_{2,3}t_{1,3} & = Z_3(Z_2+Z_3+Z_4)\otimes t^{\mu-c_1-c_3}t_{2,3}  +Z_1Z_4 \otimes t^{1-c_2-c_3}t_{1,3}\\
  & \quad \quad \quad - Z_1Z_3 \otimes t^{1+\mu -c_1-c_2-2c_3},\\
 Z_2Z_5t_{2,3}t_{1,3} & = Z_1Z_5 \otimes t^{1-c_2-c_3}t_{1,3} +Z_3Z_5\otimes t^{\mu-c_1-c_3}t_{2,3} + Z_3 \otimes q t^{\mu -c_3}t_{2,3},\\
 Z_2Z_6t_{2,3}t_{1,3} & = Z_1Z_6 \otimes t^{1-c_2-c_3}t_{1,3} +Z_3Z_6\otimes t^{\mu-c_1-c_3}t_{2,3} + Z_1 \otimes q t^{1 -c_3}t_{1,3},\\
 Z_2Z_7t_{2,3}t_{1,3} & = Z_1(Z_1 +Z_2+ Z_7) \otimes t^{1-c_2-c_3}t_{1,3}   +Z_3Z_7\otimes t^{\mu-c_1-c_3}t_{2,3} \\
  & \quad \quad \quad - Z_1Z_3 \otimes t^{1+\mu -c_1-c_2-2c_3},\\
   Z_3Z_5 & = Z_4 \otimes qt^{c_1 }t_{1,3} -Z_3 \otimes q t^{\mu-c_3},\\
 Z_3Z_6  &= \mathbbm{1}\otimes q^2 tt_{1,3},\\
 Z_3Z_7  t_{2,3} & = Z_1Z_3 \otimes t^{1-c_2-c_3} + Z_1 \otimes q  t^{1-c_2}t_{1,3},\\
 Z_4Z_6t_{1,3} & = Z_5 \otimes qt^{1-c_1 }t_{1,3} +Z_3Z_6 \otimes  t^{\mu-c_1- c_3},\\
 Z_4Z_7t_{2,3}t_{1,3} & = Z_1Z_4 \otimes t^{1-c_2-c_3}t_{1,3}  +Z_3Z_7\otimes t^{\mu-c_1-c_3}t_{2,3} \\
 & \hspace{2,45cm} - Z_3 Z_1\otimes qt^{1+\mu -c_1-c_2-2c_3}+ \mathbbm{1}\otimes q^2 t^{\mu+1-c_1-c_2}t_{2,3}t_{1,3},\\
Z_5Z_7  t_{2,3}& = Z_1Z_5 \otimes t^{1-c_2-c_3} + Z_6 \otimes q  t^{\mu-c_2}t_{2,3}
\end{split}
\end{equation}
where we should also take in account the additive relations $Z_6= Z_1 +2 Z_2 + Z_3- Z_5$ and $Z_7= -Z_1-Z_2 +Z_4 +Z_5$. 
It follows from Proposition \ref{QuantumRingIso} that $QH^*(\X_4,\omega_\mu)$  is isomorphic as a ring to $\Q[Z_1, \hdots,Z_n] \otimes \check\Pi / I $ where $I$ is the ideal generated by the relations above.  We can describe the result also in terms of homology. For that consider the homology classes $A_i= [\Phi^{-1}(D_i)] \in H_2(\X_4;\Z)$. They are additive generators of $H_2(\X_4;\Z)$ and multiplicative generators of $QH_*(\X_4,\omega_\mu)$. Moreover $QH_4(\X_4,\omega_\mu)$ is generated, as a subring of $QH_*(\X_4,\omega_\mu)$, by the elements $qA_i$. 
 These generators are $E_i \otimes q$, where $i=1, 2, 3$, $ (F-E_1) \otimes q$, $(B-E_2)\otimes q$, $(F-E_2-E_3) \otimes q$, and $(B-E_1-E_3) \otimes q$. In what follows in order to simplify notation we shall drop the sign $*$ for the quantum product. The multiplicative relations \eqref{MultiplicativeRelations} translated to homology together with the additive relations give a complete description of the $\Pi^{\mathrm{univ}}$--algebra $QH_4(\X_4,\omega_\mu)$.  More precisely, we obtain
$$QH_4(\X_4,\omega_\mu)\cong \Pi^{\mathrm{univ}}[u,v]/J$$ where 
$u=(F-E_2-E_3) \otimes q(1-t^{c_2+c_3-1})^{-1}$, $v= (B-E_1-E_3) \otimes q(1-t^{c_1+c_3-\mu})^{-1}$, and $J$ is the ideal generated by the two following relations: 
\begin{gather}\label{QuantumRelations}
u^2t^\mu (v+t^{c_2-1})(1+vt^{c_3})=v(1+vt^{c_1}), \quad \mbox{and} \quad v^2t(u+t^{c_1-\mu})(1+ut^{c_3})=u(1+ut^{c_2}).
\end{gather}
It follows from Proposition \ref{Superpotential} (1) that the Landau--Ginzburg superpotential is given in this example by 
\begin{equation}\label{eq:LG-superpot-X4}
\begin{split}
W & = z_2t^{\mu} +z_1z_2t^{\mu-c_3} + z_1+z_1z_2^{-1}t^{-c_1} +z_2^{-1}+z_1^{-1}t +z_1^{-1}z_2t^{\mu+1-c_2} \\
 & \quad +z_1t^{\mu-c_1-c_3}+z_2t^{\mu+1-c_2-c_3}.
\end{split}
\end{equation}
Therefore we have 
\begin{align*}
\frac{\partial W}{\partial z_1} & =  z_2t^{\mu-c_3} +1 + z_2^{-1}t^{-c_1} -z_1^{-2} t -z_1^{-2}z_2t^{\mu+1-c_2}+t^{\mu-c_1-c_3}, \\
\frac{\partial W}{\partial z_2} & = t^{\mu} + z_1t^{\mu-c_3} -z_1z_2^{-2}t^{-c_1} -z_2^{-2}+z_1^{-1}t^{\mu+1-c_2}+t^{\mu+1-c_2-c_3}.
\end{align*}
Passing to homology, simplifying the expressions and setting $u=z_2^{-1}t^{-\mu}$ and $v=z_1^{-1}$ we obtain relations \eqref{QuantumRelations}, as we wish. 

Similar arguments give an explicit description of the quantum homology algebra $QH_4(\X_5,\omega_\mu)$. Moreover, we have
$$QH_4(\X_5,\omega_\mu)\cong \Pi^{\mathrm{univ}}[u,v]/J$$ where again 
$u=(F-E_2-E_3) \otimes q(1-t^{c_2+c_3-1})^{-1}$, $v= (B-E_1-E_3) \otimes q(1-t^{c_1+c_3-\mu})^{-1}$ and $J$ is now the ideal generated by the two following relations: 
\begin{gather*}u^2t^\mu (v+t^{c_2-1})(1+vt^{c_3})=(1+vt^{c_1})(v+t^{c_4-1}), 
\\ v^2t(u+t^{c_1-\mu})(1+ut^{c_3})=(1+ut^{c_2})(u+t^{c_4-\mu}).
\end{gather*}
In this case the Landau--Ginzburg superpotential is given by 
\begin{equation*}
\begin{split}
W & = z_2t^{\mu} +z_1z_2t^{\mu-c_3} + z_1+z_1z_2^{-1}t^{-c_1} +z_2^{-1}+ z_1^{-1}z_2^{-1}t^{1-c_4}+z_1^{-1}t \\ 
& \ +z_1^{-1}z_2t^{\mu+1-c_2}  +z_1t^{\mu-c_1-c_3}+z_2t^{\mu+1-c_2-c_3}+z_1^{-1}t^{\mu+1-c_2-c_4}+ z_2^{-1}t^{1-c_1-c_4}.
\end{split}
\end{equation*}
\begin{rema}\label{rema:CLpotential}
Note that these results agree with the results of Chan and Lau. The manifolds $\X_4$ and $\X_5$ coincide with the surfaces $X_7$  and $X_{10}$, respectively, described in \cite[Appendix A]{CL}. We obtain the same expressions for the potential after changes of variable: replacing $z_2$ by $z_1z_2^{-1}t^{-c_1}$, keeping the variable $z_1$  and letting $q_1= t^{\mu-c_1-c_3}$, $q_2=t^{\mu-c_2}$, $q_3=t^{c_2}$, $q_4=t^{1-c_2-c_3}$ and $q_5=t^{c_3}$ in  the potential for $X_7$ leads to \eqref{eq:LG-superpot-X4} above. Similarly, making the same change of variable for $X_{10}$ and letting $q_1= t^{\mu-c_1-c_3}$, $q_2=t^{c_4}$, $q_3= t^{\mu-c_2-c_4}$, $q_4=t^{c_2}$, $q_5=t^{1-c_2-c_3}$ and $q_6=t^{c_3}$ we see again that the two expressions for the potential agree. \end{rema}

\subsection{Non-NEF examples}
\label{sec:non-nef-examples}

Particularly interesting examples which are relevant for our study are the Hirzebruch surfaces. We use the conventions and the description adopted in \cite{AnjosPinsonnault13} for these surfaces. We recall that the toric ``even'' Hirzebruch surfaces $(\F_{2k},\omega_{\mu})$, $0\leq k\leq \ell$ with $\ell \in \N$ and $\ell < \mu \le \ell+1$, can be identified with the symplectic manifolds $(S^{2}\times S^{2},\omega_\mu)$ where $\omega_\mu$ is  the split symplectic form with area $\mu \geq 1$ for the first $S^2$--factor, and with area 1 for the second factor. The moment polytope of $\F_{2k}$ is
$$\left\{ (x_1,x_2) \in \R^2 \mid 0 \leq x_1 \leq 1, \ x_2+kx_1 \geq 0, \ x_2-kx_1 \leq \mu-k\right\}$$ 
and its primitive outward normals are 
$$v_1=(1,0),\,  v_2=(-k,-1),\, v_3=(-1,0), \ \mbox{and} \ v_4=(-k,1).$$
Let $\Lambda^{2k}_{e_1}$ and $\Lambda^{2k}_{e_2}$ represent the circle actions whose moment maps are, respectively, the first and the second component of the moment map associated to the torus action $T_{2k}$ acting on $\F_{2k}$. We will also denote by $\Lambda^{2k}_{e_1},\Lambda^{2k}_{e_2}$ the generators in $\pi_1(T_{2k})$. It follows from the classification of 4--dimensional Hamiltonian $S^1$--spaces given by Karshon in \cite{Ka99} that $\Lambda^{2k}_{e_1},\Lambda^{2k}_{e_2}$ satisfy the relations $\Lambda^{2k}_{e_1}= k\Lambda^{2}_{e_1}+ (k-1)\Lambda^{0}_{e_1}$ and $\Lambda^{2k}_{e_2}=k\Lambda^{0}_{e_1}+\Lambda^{0}_{e_2}$. Since $\F_0$ is Fano and $\F_2$ is NEF we can obtain from our results the Seidel elements associated to $\Lambda^{0}_{e_1}$, $\Lambda^{0}_{e_2}$, and $\Lambda^{2}_{e_1}$, and thus the ones associated to the circle actions  of $\F_{2k}$ even though for all $k \geq 2$, $\F_{2k}$ is non-NEF.

In particular, we can give explicit expressions for the Seidel elements associated to $\F_4$ which admits a pseudo-holomorphic sphere with negative first Chern number, representing the class $B-2F$ where $B=[S^2 \times \{p\}]$, and $F=[\{p\} \times S^2]$. Since $\F_0$ is Fano it is easy to check that the Seidel elements associated to the circle actions $\Lambda^{0}_{e_1}$ and $\Lambda^{0}_{e_2}$ are given by $S(\Lambda^{0}_{e_1})=B \otimes qt^{\frac12}$ and $ S(\Lambda^{0}_{e_2})=F \otimes qt^{\frac\mu2}$ (see \cite[Example 5.7]{McDuffTolman06}). From this case we can also obtain the following products in the quantum homology ring: $F * F =  \mathbbm 1 \otimes q^{-2}t^{-\mu}$, $B * B = \mathbbm 1\otimes q^{-2}t^{-1}$, $F*B=p$ and deduce the remaining products from these ones.  

For the toric manifold $\F_2$ 
 the normalised moment map is given by 
$$ \Phi(z_1,z_2,z_3,z_4)= \left(-\frac12|z_1|^2+\frac12 -\epsilon, -\frac12|z_1|^2-\frac12|z_4|^2+ \frac{\mu+1}{2}\right),$$
where $\epsilon= \frac{1}{6\mu}.$
Let $\Lambda_{v_i}^{2k}$ denote the circle action associated to the normal vector $v_i$ to the polytope of the surface $\F_{2k}$. Then Theorem \ref{theo:SeidelsMorphism} implies that, in the case of  $\F_2$,  the Seidel elements associated to these actions are given by 
\begin{equation*}
\begin{split}
S(\Lambda_{v_1}^2) & =(B+F) \otimes qt^{\frac12 -\epsilon}, \ S(\Lambda_{v_3}^2)=(B-F) \otimes q\frac{t^{\frac12 +\epsilon}}{1-t^{1-\mu}} \quad  \mbox{and} \\
S(\Lambda_{v_2}^2) & =S(\Lambda_{v_4}^2)= F \otimes qt^{\frac\mu2 +\epsilon}-(B-F) \otimes q\frac{t^{1-\frac\mu2 +\epsilon}}{1-t^{1-\mu}} \,.
\end{split}
\end{equation*}

Since $\Lambda^{2}_{e_1}=\Lambda_{v_1}^2$, $S(\Lambda^{2}_{e_1})=S(\Lambda_{v_1}^2)$ and it follows that for the non-NEF toric manifold $\F_4$ the Seidel elements associated to the circle actions $\Lambda^{4}_{e_1}$ and $\Lambda^{4}_{e_2}$ are given by 
\begin{equation*}
\begin{split}
S(\Lambda^{4}_{e_1}) & =S(\Lambda^{2}_{e_1})^2*S(\Lambda^{0}_{e_1})= (B+2F) \otimes qt^{\frac12 -2\epsilon}+ B\otimes qt^{\frac32 -\mu -2\epsilon},\\
S(\Lambda^{4}_{e_2}) & =S(\Lambda^{0}_{e_1})^2*S(\Lambda^{0}_{e_2})= S(\Lambda^{0}_{e_2})= F \otimes qt^{\frac\mu2}, 
\end{split}
\end{equation*}
because $S(\Lambda^{0}_{e_1})^2= \mathbbm 1$. 
 Therefore in this case, since  $\Lambda_{e_1}^4=\Lambda^4_{v_1}$, it follows that 
 $$ S(\Lambda_{v_1}^4)= qt^{\frac12 -2\epsilon} \otimes (B+2F + B\otimes qt^{1-\mu}).$$
  Since $v_1+v_3=0$
it follows that $S(\Lambda_{v_3}^4)= S(\Lambda^{4}_{e_1})^{-1}= \left(S(\Lambda^{2}_{e_1})^{-1}\right)^2 *S(\Lambda^{0}_{e_1})^{-1}$ and since
$$S(\Lambda^{2}_{e_1})^{-1}=(B-F) \otimes q\frac{t^{\frac12 +\epsilon}}{1-t^{1-\mu}}$$
we obtain 
$$S(\Lambda_{v_3}^4)= \frac{qt^{\frac12 +2\epsilon}}{(1-t^{1-\mu})^2}\otimes \left[ B-2F  +B\otimes qt^{1-\mu}\right].$$
Finally, since $v_4=2v_3 +(0,1)$,  $v_2=2v_3 +(0,-1)$, and $S(\Lambda^{4}_{e_2})=S(\Lambda^{4}_{e_2})^{-1}$ it follows that 
$ S(\Lambda_{v_2}^4)= S(\Lambda_{v_4}^4)=S(\Lambda_{v_3}^4)^2*S(\Lambda^{4}_{e_2})$, hence
\begin{equation*}
S(\Lambda_{v_2}^4)= S(\Lambda_{v_4}^4)  = \frac{qt^{\frac\mu2 +4\epsilon}}{(1-t^{1-\mu})^4} \left[ F\otimes(1-t^{1-\mu})^2 - 4t^{1-\mu}(B-2F  +B\otimes qt^{1-\mu})\right] \,.
\end{equation*}
It follows that in equation \eqref{higherterms}  we may take
\begin{align*}
Y_1 & = Z_1 + (Z_3+Z_2+Z_4) \otimes t^{\mu-1}, \ Y_3 = \frac{1}{(1- t^{\mu-1})^2} (Z_3 + (Z_3+Z_2+Z_4) \otimes t^{\mu-1}),\\
Y_2 & = \frac{1}{(1- t^{\mu-1})^2}(Z_2 - 4t^{\mu-1}Y_3), \quad Y_4= \frac{1}{(1- t^{\mu-1})^2}(Z_4 - 4t^{\mu-1}Y_3).
\end{align*}
Since the ring structure on the quantum homology is known we can check that this choice of $Y_i$ satisfies the equations induced by the primitive relations, that is,
$$ Y_1Y_3-\mathbbm 1 \otimes q^2 t \quad \mbox{and} \quad Y_2Y_4- (Y_3)^4 \otimes q^{-2}t^{\mu-2}$$
are generators of  the ideal $SR_Y(P)$.
In order to have a potential $W$ such that the isomorphism in Theorem \ref{Ostrover} holds we need that the homomorphism $\Psi$, inducing the isomorphism,  satisfies equations \eqref{defpsi}. Recall that the generators of the ideal $SR_Y(P)$ should be in the kernel of $\Psi$ and the image of the additive relations gives the derivatives of the potential. 
\begin{align}\label{defpsiII}
  \begin{split}
\Psi(Y_1) =qz_1t  &\;\Leftrightarrow\; \Psi (Z_1)+ \Psi (Z_2+Z_3 +Z_4)t^{\mu-1}= qz_1t \\
\Psi(Y_2) =qz_1^{-2}y^{-1} &\;\Leftrightarrow\; \Psi (Z_2) -4 t^{1-\mu}\Psi(Y_3)=qz_1^{-2}y^{-1}(1- t^{\mu-1})^2 \\
\Psi(Y_3) =qz_1^{-1}  &\;\Leftrightarrow\; \Psi (Z_3)+ \Psi (Z_2+Z_3 +Z_4)t^{\mu-1}= qz_1^{-1} (1- t^{\mu-1})^2 \\
\Psi(Y_4) =qz_1^{-2}yt^{\mu-2} &\;\Leftrightarrow\; \Psi (Z_4) -4 t^{1-\mu}\Psi(Y_3)=qz_1^{-2}yt^{\mu-2}(1- t^{\mu-1})^2
  \end{split}
\end{align}
Since the additive relations are $Z_1-Z_3 -2Z_2-2Z_4=0$ and $Z_4-Z_2=0$ it follows from equations \eqref{defpsiII} that the derivatives of the potential $W$ are given by the following expressions:
\begin{equation*}
\begin{split}
q z_1 \frac{\partial W}{\partial z_1} & = \Psi(Z_1)-\Psi(Z_3) -2\Psi(Z_2)-2\Psi(Z_4) \\
& = qz_1t-qz_1^{-1} (1- t^{\mu-1})^2-16 qz_1^{-1}t^{\mu-1} -2 (qz_1^{-2}z_2^{-1}+qz_1^{-2}z_2t^{\mu-2})(1- t^{\mu-1})^2,\\
 q z_2 \frac{\partial W}{\partial z_2} & = \Psi (Z_4) -\Psi(Z_2) = (qz_1^{-2}z_2t^{\mu-2}-qz_1^{-2}z_2^{-1})(1- t^{\mu-1})^2 .
\end{split}
\end{equation*}
Therefore the potential is given by 
\begin{equation}\label{potentialF4}
W= z_1t +(z_1^{-1}+z_1^{-2}z_2^{-1}+z_1^{-2}z_2t^{\mu-2})(1- t^{\mu-1})^2 +16z_1^{-1}t^{\mu-1}.
\end{equation}
\begin{rema}
In this non-NEF example we see that  the number of terms  corresponding to the quantum corrections in the Landau--Ginzburg superpotential is still finite. In the formalism of \cite{CL} and \cite{CLLT} the primitive rays of the fan (or the interior normal vectors of the polytope) are given by 
$v_1=(1,0),\,  v_2=(0,1),\, v_3=(-1,-4)$, and $v_4=(0,-1)$
and the polytope is defined by the following inequalities 
$$x_1 \geq 0, \;  x_2 \geq 0, \; 4t_1+t_2 -x_1-4x_2 \geq 0 \; \mbox{and} \; t_1 -x_2 \geq 0,$$
where the $t_l$'s are positive numbers. Let $q_l= \exp (-t_l)$ be the K\"ahler parameters. Then, in their formalism, the potential is  given by 
$$W= z_1(1-2q_1q_2+q_1^2q_2^2) +z_2 + \frac{q_1^4q_2}{z_1z_2^4}(1-2q_1q_2+q_1^2q_2^2) + \frac{q_1}{z_2}(1+14q_1q_2+q_1^2q_2^2).$$
In  this expression $z_1$ and $z_2$ correspond to  $z_1^{-2}z_2^{-1}$ and $z_1t$, respectively, in equation \eqref{potentialF4} while $q_1 = t$ and $q_2=t^{\mu-2}$. 
Moreover, if \cite[Conjecture 6.7]{CLLT} holds then we can obtain the  open Gromov--Witten invariants  of $\F_4$ from our computation of the potential. In particular we see that there must be some negative open Gromov--Witten invariants, phenomenon which does never happen in the NEF case. 
\end{rema} 
We conclude that, even in this non-NEF example, although there are infinitely many contributions to the Seidel elements associated to the Hamiltonian circle actions, these quantum classes can still be expressed by explicit closed formulas. It is clear that as we increase the value of $k$ the expressions for the Seidel elements corresponding to the circle actions $\Lambda^{2k}_{e_1},\Lambda^{2k}_{e_2}$ in $\F_{2k}$ are going to be harder to write explicitly. However, from the work of Abreu and McDuff in \cite{AMcD} we know that the generators of the fundamental group of the symplectomorphism group of $(S^2\times S^2,\omega_\mu)$ are given by $\Lambda^{0}_{e_1},\Lambda^{0}_{e_2}$ and $\Lambda^{2}_{e_1}$, so our computations allow us to give a complete description of the Seidel representation for these manifolds (regardless of the value of $\mu$ provided that $\mu \geq 1$). 

\begin{rema}
The ``Odd'' Hirzebruch surfaces $(\F_{2k-1},\omega_{\mu}')$, $1\leq k\leq \ell$ with $\ell \in \N$ and  $\ell < \mu \le \ell+1$, can be identified with the symplectic manifolds $(\PbP,\omega_\mu')$ where the symplectic area of the exceptional divisor is  $\mu >0$ and the area of the projective line is $\mu +1$. Its moment polytope is 
$$\left\{ (x_1,x_2) \in \R^2 \mid 0 \leq x_1+x_2 \leq 1, \ x_2(k-1)+kx_1 \geq 0, \ kx_2+(k-1)x_1 \geq k -\mu-1\right\}.$$
Similar computations can be made for $\F_{2k-1}$, since $\F_1$ is Fano and we can show that 
$\Lambda^{2k-1}_{e_1}=\Lambda^{2k-1}_{e_2}= (k-1)\Lambda^{1}_{e_1}+ k\Lambda^{1}_{e_2}$, using Karshon's classification of Hamiltonian circle actions. 
\end{rema}

%
%

\appendix
\section{Additional computations of Seidel's elements}
\label{sec:BrutalComputationsSeidSMorph}

We gather here results of computations of Seidel's elements in the case when the number of facets, in the vicinity of $D_n$, corresponding to spheres in $M$ with vanishing first chern number is $3$ (this is complementary to Theorem \ref{theo:SeidelsMorphism}, see Figure \ref{fig:cases}). In order to ease the reading, we denote the weights $\omega(A_i)$ by $\omega_i$.

(2c) If $c_1(A_n)=c_1(A_1)=c_1(A_2)=0$ but $c_1(A_{n-1})$ and $c_1(A_{3})$ are non-zero, then
  \begin{align*}
S(\Lambda) = &\left[ \left( A_n\otimes q\,\frac{t^{\Phi_\max}}{1-t^{-\omega_n}}-A_1\otimes q\,\frac{t^{\Phi_\max-\omega_1}}{1-t^{-\omega_1}} \right)\cdot \frac{1}{1-t^{-\omega_n-\omega_1}}\right.\\
& \qquad \left.-\left(A_1\otimes q\,\frac{t^{\Phi_\max}}{1-t^{-\omega_1}}-A_2\otimes q\,\frac{t^{\Phi_\max-\omega_2}}{1-t^{-\omega_2}}\right) \cdot\frac{t^{-\omega_1-\omega_2}}{1-t^{-\omega_1-\omega_2}}\right]  \cdot\frac{1}{1-t^{-\omega_n-\omega_1-\omega_2}}    
  \end{align*}

(2d) If $c_1(A_n)=c_1(A_{n-1})=c_1(A_1)=0$ but $c_1(A_2)$ and $c_1(A_{n-2})$ are non-zero, then
  \begin{align*}
S(\Lambda) = &\left[ \left( A_n\otimes q\,\frac{t^{\Phi_\max}}{1-t^{-\omega_n}}-A_{n-1}\otimes q\,\frac{t^{\Phi_\max-\omega_{n-1}}}{1-t^{-\omega_{n-1}}} \right)\cdot \frac{1}{1-t^{-\omega_n-\omega_{n-1}}}\right.\\
&+\left. \left( A_n\otimes q\,\frac{t^{\Phi_\max}}{1-t^{-\omega_n}}-A_1\otimes q\,\frac{t^{\Phi_\max-\omega_1}}{1-t^{-\omega_1}} \right)\cdot \frac{1}{1-t^{-\omega_n-\omega_1}} \right. \\
&\left.-A_n\otimes q\,\frac{t^{\Phi_\max}}{1-t^{-\omega_n}}\right]\cdot \frac{1}{1-t^{-\omega_n-\omega_{n-1}-\omega_1}}
  \end{align*}

(3d) If $c_1(A_1)=c_1(A_2)=c_1(A_{3})=0$ but $c_1(A_n)$, $c_1(A_{4})$ and $c_1(A_{n-1})$ are non-zero, then 
  \begin{align*}
S(\Lambda) = A_n\otimes & qt^{\Phi_\max}-A_1\otimes q\,\frac{t^{\Phi_\max-\omega_1}}{1-t^{-\omega_1}}\\
& -\left(A_1\otimes q\,\frac{t^{\Phi_\max}}{1-t^{-\omega_1}}-A_2\otimes q\,\frac{t^{\Phi_\max-\omega_2}}{1-t^{-\omega_2}}\right)\cdot\frac{t^{-\omega_1-\omega_2}}{1-t^{-\omega_1-\omega_2}}\\
& -\left(A_1\otimes q\,\frac{t^{\Phi_\max}}{1-t^{-\omega_1}}-A_2\otimes q\,\frac{t^{\Phi_\max-\omega_2}}{1-t^{-\omega_2}}\right)\cdot 
 \frac{t^{-\omega_1-\omega_2-\omega_{3}}}{(1-t^{-\omega_1-\omega_2-\omega_{3}})(1-t^{-\omega_1-\omega_2})}\\
& + \left(A_2\otimes q\,\frac{t^{\Phi_\max}}{1-t^{-\omega_2}}-A_{3}\otimes q\,\frac{t^{\Phi_\max-\omega_{3}}}{1-t^{-\omega_{3}}}\right)\cdot 
\frac{t^{-\omega_1-2\omega_2-2\omega_{3}}}{(1-t^{-\omega_2-\omega_{3}})(1-t^{-\omega_1-\omega_2-\omega_{3}})}    
  \end{align*}

(3e) If $c_1(A_{n-1})=c_1(A_1)=c_1(A_{2})=0$ but $c_1(A_n)$, $c_1(A_{3})$ and $c_1(A_{n-2})$ are non-zero, then 
\begin{align*}
  S(\Lambda)  =  A_n\otimes qt^{\Phi_\max} &-A_{n-1}\otimes q\,\frac{t^{\Phi_\max-\omega_{n-1}}}{1-t^{-\omega_{n-1}}} -A_1\otimes q\,\frac{t^{\Phi_\max-\omega_1}}{1-t^{-\omega_1}}  \\
 &-\left(A_1\otimes q\,\frac{t^{\Phi_\max}}{1-t^{-\omega_1}}-A_2\otimes q\,\frac{t^{\Phi_\max-\omega_2}}{1-t^{-\omega_2}}\right)\cdot 
\frac{t^{-\omega_1-\omega_2}}{1-t^{-\omega_1-\omega_2}}
\end{align*}

\end{document}